 \makeatletter \@addtoreset{equation}{section}
\newtheorem{thm}{Theorem}[section]
\newtheorem{hyp}[thm]{Hypotheses}{\rm}
{\rm}
\newtheorem{lemm}[thm]{Lemma}
\newtheorem{prop}[thm]{Proposition}
\newtheorem{defi}[thm]{Definition}
\newtheorem{rmk}[thm]{Remark}{\rm}
\newcommand{\R}{{\mathbb R}}
\newcommand{\N}{{\mathbb N}}
\newcommand{\Rd}{\mathbb R^d}
\newcommand{\supp}{{\rm{supp}}\,}
\newcommand{\bd}{\begin{defi}}
\newcommand{\ed}{\end{defi}}
\newcommand{\nnm}{\nonumber}
\newcommand{\be}{\begin{equation}}
\newcommand{\ee}{\end{equation}}
\newcommand{\barr}{\begin{array}}
\newcommand{\earr}{\end{array}}
\newcommand{\bmn}{\begin{eqnarray}}
\newcommand{\emn}{\end{eqnarray}}
\newcommand{\bnm}{\begin{eqnarray*}}
\newcommand{\enm}{\end{eqnarray*}}
\newcommand{\bln}{\begin{subequations}}
\newcommand{\eln}{\end{subequations}}
\newcommand{\ba}{\begin{align}}
\newcommand{\ea}{\end{align}}
\newcommand{\banm}{\begin{align*}}
\newcommand{\eanm}{\end{align*}}
\title[Summability improving in nonautonomous Kolmogorov equations]{On improvement of summability properties in nonautonomous Kolmogorov equations}
\author[L. Angiuli, L. Lorenzi]{Luciana Angiuli and Luca Lorenzi}
\address{Dipartimento di Matematica, Universit\`a degli Studi di Parma, Parco Area delle Scienze 53/A, I-43124 Parma, Italy.}
\email{(luciana.angiuli, luca.lorenzi)@unipr.it}
\keywords{Nonautonomous second order elliptic
operators, unbounded coefficients, evolution operators, evolution systems of measures, Harnack type inequality, supercontractivity, ultraboundedness, ultracontractivity}
\subjclass[2000]{35K10, 35K15, 37L40}
\date{}
\begin{document}

\begin{abstract}
Under suitable conditions, we obtain some characterization of supercontractivity,
ultraboundedness and ultracontractivity of the evolution operator $G(t,s)$ associated to a class of
nonautonomous second order parabolic equations with unbounded coefficients
defined in $I\times\R^d$, where $I$ is a right-halfline. For this purpose, we establish
an Harnack type estimate for $G(t,s)$ and a family of logarithmic
Sobolev inequalities with respect to the unique tight evolution system of measures
$\{\mu_t: t \in I\}$ associated to $G(t,s)$. Sufficient conditions for the supercontractivity, ultraboundedness and
ultracontractivity to hold are also provided.
\end{abstract}

\maketitle
\section{Introduction}
Let ${\mathcal A}$ be an autonomous second order uniformly elliptic operator
with unbounded coefficients defined in $\Rd$. It is well known that, under suitable assumptions
on its coefficients, a Markov semigroup $T(t)$ can be associated in $C_b(\Rd)$ to the operator $\mathcal{A}$.
More precisely, for any $f\in C_b(\Rd)$, $T(t)f$ is the value at $t$ of the (unique) bounded classical solution of the Cauchy problem
\begin{eqnarray*}
\left\{
\begin{array}{ll}
D_tu(t,x)={\mathcal{A}}u(t,x),\quad\quad & (t,x)\in (0,+\infty)\times \Rd,\\[1mm]
u(0,x)= f(x),\quad\quad & x\in \Rd.
\end{array}\right.
\end{eqnarray*}
Under somehow stronger assumptions on the coefficients of the operator $\mathcal{A}$, an invariant measure $\mu$ can be
associated to the semigroup $T(t)$ which can be extended to a contractive semigroup in $L^p(\Rd,\mu)$ for any $p\in [1,+\infty)$.

It is also well known that in some cases $T(t)$ improves summability, i.e., it maps $L^p(\Rd,\mu)$ into $L^q(\Rd,\mu)$ for some $q>p$ and $t> \overline{t}(p,q)\geq 0$, and
\begin{equation}\label{est_intro}
C_{p,q}(t):=\|T(t)\|_{\mathcal{L}(L^p(\Rd,\mu),L^q(\Rd,\mu))}<+\infty.
\end{equation}
This property is called \emph{hypercontractivity} if $p,q \in(1,+\infty)$, $\overline{t}(p,q)>0$
and $C_{p,q}(t)=1$, \emph{supercontractivity} if $p,q \in(1,+\infty)$
and $\overline{t}(p,q)=0$, \emph{ultraboundedness} if $p \in(1,+\infty)$, $q=+\infty$
and $\overline{t}(p,q)=0$.
If $p\in [1,+\infty)$ this last property is called \emph{ultracontractivity}.

Estimate \eqref{est_intro}
is equivalent to the occurrence of some functional inequalities satisfied by the
invariant measure $\mu$. We refer to \cite{Gro75Log}, the pioneering work on such topics,
where a characterization of the hypercontractivity
and the supercontractivity of the semigroup $T(t)$ is given in terms of some logarithmic Sobolev inequalities.

Ultraboundedness and ultracontractivity have been widely studied in the autonomous setting,
mainly in the symmetric case (where they are equivalent).
 The first result in this direction is due to Davies and Simon \cite{DavSim84Ult,Davies} that, following the idea of Gross and
requiring some additional integrability conditions, connect ultracontractivity with a family of logarithmic Sobolev inequalities.

Other different approaches to study ultracontractivity have been also suggested by \cite{CarKusStr87Upp} and, more recently, by \cite{wan00Fun}.

On the other hand, to the best of our knowledge, results on summability improving have been not yet studied in the nonautonomous case.

In the recent paper \cite{AngLorLun} we have dealt with
hypercontractivity and we have extended the connection with logarithmic
Sobolev inequalities in a nonautonomous setting, where the semigroup $T(t)$ and the invariant measure $\mu$ are replaced, respectively,
by a Markov evolution operator $G(t,s)$
and an evolution system of measures $\{\mu_t\}$.

In this paper we are interested in exploiting some regularizing properties, stronger
than hypercontractivity, for the evolution operator $G(t,s)$,
and in characterizing them in terms of suitable
inequalities satisfied by an evolution system of measures $\{\mu_t\}$.

Let $I$ be an open right halfline and for every $t \in I$ consider
the nonautonomous second order differential operator $\mathcal{A}(t)$
defined on smooth functions $\zeta$ by
\begin{eqnarray*}
(\mathcal{A}(t)\zeta)(x)=\textrm{Tr}(Q(t)D^2\zeta(x))+ \langle b(t,x),
\nabla \zeta(x)\rangle,\qquad\;\, x\in\R^d.
\end{eqnarray*}
We assume some smoothness on  $Q=[q_{ij}]_{i, j=1, \ldots, d}$ and
$b = (b_1, \ldots, b_d)$, defined in $I$ and $I\times\Rd$, respectively. Moreover, we require that the coefficients $q_{ij}$ are bounded and that
the operators $\mathcal{A}(t)$ are uniformly elliptic, i.e., there exists
a positive constant $\eta_0$ such that
\begin{eqnarray*}
\langle Q(t)\xi,\xi\rangle\geq\eta_0|\xi|^2,\qquad\;\, t\in I,\;\,\xi\in \R^d.
\end{eqnarray*}
Assuming the existence of a Lyapunov function, for every $s\in I$ and $f\in C_b(\Rd)$, the nonautonomous Cauchy problem
\begin{eqnarray*}
\left\{
\begin{array}{ll}
D_tu(t,x)={\mathcal{A}}(t)u(t,x),\quad\quad & (t,x)\in (s,+\infty)\times \Rd,\\[1mm]
u(s,x)= f(x),\quad\quad & x\in \Rd,
\end{array}\right.
\end{eqnarray*}
admits a unique bounded classical solution $u=G(\cdot,s)f$, where $G(t,s)$ is a Markov evolution operator. The function
$G(\cdot,s)f$ belongs to $C^{1+\alpha/2,2+\alpha}_{\rm loc}((s,+\infty)\times \R^d)$ and admits the following
representation formula
\begin{equation}\label{rep_nucleo_intro}
(G(t,s)f)(x)=\int_{\R^d}g_{t,s}(x,y)f(y)dy,\qquad\;\,s<t,\;\,x\in\R^d,\;\, f\in C_b(\Rd),
\end{equation}
where $g_{t,s}:\R^d\times\R^d\to\R$ is a positive function such that
 $\|g_{t,s}(x,\cdot)\|_{L^1(\Rd)}=1$ for any $t,s\in I$, with $t>s$, and any $x\in\Rd$.

The existence of a Lyapunov function such that
\begin{equation*}
\lim_{|x|\to +\infty}\varphi(x)=+\infty \quad\textrm{and}\quad
(\mathcal{A}(t)\varphi)(x)\leq a-\gamma\,\varphi(x),  \quad (t,x)\in I\times \Rd,
\end{equation*}
for some positive constants $a$ and $\gamma$, allows (see \cite{KunLorLun09Non}) to prove the existence of tight evolution
 systems of measures $\{\mu_t: t\in I\}$, i.e., families of Borel probability measures such that $\mu_t(B(0,R))$ tends to $1$
 as $R\to +\infty$, uniformly with respect to $t\in I$, and
\begin{equation}\label{invariance_intro}
\int_{\Rd}(G(t,s)f)(y)d\mu_t(y)=\int_{\Rd}f(y)d\mu_s(y), \qquad\;\, t>s\in I, \;\, f\in C_b(\Rd).
\end{equation}
The interest in evolution systems of measures is due to the good properties that the
evolution operators enjoy in the $L^p$-spaces related to these systems.
Indeed, using \eqref{invariance_intro} and the density of $C^{\infty}_c(\Rd)$ in $L^p(\Rd,\mu_t)$
for every $t \in I$, the evolution operator can be extended to a contraction (still denoted by
$G(t,s)$) from $L^p(\Rd,\mu_s)$ to $L^p(\Rd,\mu_t)$ for every $p\in [1,+\infty)$.

In this context a generalization to the nonautonomous case  of the definitions of hypercontractivity,
supercontractivity, ultracontractivity and ultraboundedness (see Definition \ref{definitions})
and of their
characterizations is significant and interesting.

As it has  been already remarked, in \cite{AngLorLun} hypercontractivity
of the evolution operator $G(t,s)$ has been studied,
assuming some stronger assumption than the minimal ones that guarantee the basic properties of $G(t,s)$ and the existence of an evolution system of measures $\{\mu_t: t\in I\}$.
In fact, if the dissipativity condition
\begin{eqnarray}\label{dissipativity_intro}
\langle \nabla_xb(t,x)\xi, \xi \rangle \leq r_0|\xi|^2, \qquad\;\, t\in I, \;\,x,\xi \in \R^d
\end{eqnarray}
is satisfied for some $r_0<0$, then the logarithmic Sobolev inequality (in short LSI) for the unique tight evolution system of measures $\{\mu_s: s\in I\}$
\begin{equation}
\int_{\Rd} f^2\log \left(\frac{|f|}{\|f\|_{L^2(\Rd,\mu_s)}}\right)\,d\mu_s(x) \le C\int_{\Rd}|\nabla f|^2d\mu_s(x) ,
\label{LSI}
\tag{LSI}
\end{equation}
holds for any $s\in I$, $f\in H^1(\Rd,\mu_s)$ and some positive constant $C$, independent of $f$ and $s$. The hypercontractivity of $G(t,s)$
in $L^p$ spaces related to the unique tight evolution system of measures, is obtained as a
consequence of the (LSI).

In general, evolution systems of measures are infinitely many (see e.g., \cite{GeisLun08}). Among all of them, the unique tight system as a prominent role.
Indeed, it is related to the asymptotic behaviour of $G(t,s)$ as $t\to +\infty$. As it has been proved in \cite{AngLorLun}, under condition \eqref{dissipativity_intro}
\begin{eqnarray*}
\lim_{t\to +\infty}\int_{\Rd}|G(t,s)f-m_s(f)|^pd\mu_t(x) =0,
\end{eqnarray*}
uniformly with respect to $f\in L^p(\Rd,\mu_s)$, $p\in [1,+\infty)$, where $m_s(f)$ denotes the average of $f$ with respect to the measure $\mu_s$.

In this paper, we assume that condition \eqref{dissipativity_intro} holds true and
consider the unique tight evolution system of measures $\{\mu_s: s\in I\}$.

We first prove that the supercontractivity property of the evolution operator
$G(t,s)$ is equivalent to the validity of the following family of logarithmic
Sobolev inequalities (in short LSI$_\varepsilon$)
\begin{equation}\label{fam_log_sob}
\int_{\Rd} f^2\log \left(\frac{|f|}{\|f\|_{L^2(\Rd,\mu_s)}}\right)
d\mu_s(x) \le \varepsilon \|\,|\nabla f|\,\|_{L^2(\Rd,\mu_s)}^2 +\beta(\varepsilon)\|f\|_{L^2(\Rd,\mu_s)}^2,
\end{equation}
for every $s\in I$, $f\in H^1(\Rd,\mu_s)$, $\varepsilon >0$ and some
positive decreasing function $\beta$.
We follow the method of \cite{RocWan03Sup} that, on a Riemann manifold $M$,
deals with the diffusion semigroup $P_t$  generated by
the autonomous operator $L=\Delta+Z\nabla$ with Neumann boundary conditions on $\partial M$,
where $Z$ is a $C^1$-vector field satisfying a curvature condition.
The condition on the curvature is used to deduce the following logarithmic Sobolev inequality satisfied by $P_t$
\begin{equation}\label{LSI_Pt}
P_t(f^2\log f^2)\leq \frac{2(e^{2Kt}-1)}{K}P_t|\nabla f|^2+(P_t f^2)\log(P_t f^2),
\end{equation}
which holds for every $f\in C_0^{\infty}(M)$, $t>0$ and some positive constant $K>0$.

The starting point of our analysis is the analogue of \eqref{LSI_Pt} in the nonautonomous case;
 we prove a logarithmic Sobolev inequality satisfied by the probability measures
$g_{t,s}(x, dy)=g_{t,s}(x,y)dy$ defined in \eqref{rep_nucleo_intro}. More precisely, we show that
\begin{equation}\label{log_sob_G_2_intro}
G(t,s)(f^2 \log f^2)\leq   \frac{4 \Lambda}{|r_0|} (1-e^{2r_0(t-s)})
G(t,s)(|\nabla f|^2)+(G(t,s)f^2) \log (G(t,s)f^2),
\end{equation}
for every $f\in C^1_b(\Rd)$ and $t,\,s \in I$ such that $t\ge s$.
The key tool for the proof of estimate \eqref{log_sob_G_2_intro}
(and of many results in the paper) is the pointwise gradient estimate
\begin{equation}
\label{grad_est_intro}
|(\nabla_x G(t,s)f)(x)|\leq e^{r_0(t-s)}(G(t,s)|\nabla f|)(x),\qquad\;\,t>s,\;\,x\in\Rd,\;\,f\in C^1_b(\Rd),
\end{equation}
that has been proved in \cite{KunLorLun09Non} under the assumption \eqref{dissipativity_intro}
(which is equivalent to the condition considered in \cite{RocWan03Sup}).
Even in the autonomous case, \eqref{grad_est_intro} does not hold when the diffusion coefficients
depend on $x$ and they do not satisfy the condition in \cite{wang}. This is the reason why
we confine ourself to the case of diffusion coefficients depending only on $t$.

Another important consequence of \eqref{grad_est_intro} is the Harnack type estimate
\begin{equation}\label{Harnack_intro}
|(G(t,s)f)(x)|^2\leq (G(t,s)|f|^2)(y)\exp{\left(\frac{|x-y|^2}{2\eta_0(t-s)}\right)},\qquad\;\,t>s,\;\,x,y\in\Rd,
\end{equation}
satisfied by any $f\in C_b(\Rd)$.
Estimate \eqref{Harnack_intro} and LSI$_\varepsilon$ allow us to prove a second criterion for supercontractivity:
we show that the integrability with respect to the measures $\{\mu_t:t\in I\}$ (uniform in $t$) of the Gaussian functions
$\varphi_{\lambda}(x):=e^{\lambda |x|^2}$, for every $\lambda>0$, is another condition equivalent to the supercontractivity of $G(t,s)$.
This second characterization is useful in order to provide a sufficient condition for the evolution operator $G(t,s)$ to be supercontractive
as stated in Theorem \ref{suff_cond_super}.

The Harnack type estimate \eqref{Harnack_intro} is also the key tool to prove that,
if $G(t,s)\varphi_\lambda\in L^\infty(\Rd)$ for every $t>s\in I$ and
$\lambda>0$, and
\begin{equation}
\sup_{{s,t\in I}\atop{t-s \ge \delta}}\|G(t,s)\varphi_{\lambda}\|_{\infty}<+\infty,\qquad\;\,\delta,\lambda>0,
\label{unif-limit-intr}
\end{equation}
then $G(t,s)$ is ultrabounded.
We provide a sufficient condition for
$G(t,s)\varphi_\lambda$ to be bounded for every $t>s\in I$ and every $\lambda>0$ (see Theorem \ref{ultrathm}).

Actually, condition \eqref{unif-limit-intr} is also necessary to get ultraboundedness.
We prove the necessity of this condition using the characterization of the supercontractivity property
in terms of the family of inequalities \eqref{fam_log_sob}.

A quite sharp condition to get ultraboundedness of $G(t,s)$ is given in terms of the inner product between the drift $b(t,x)$ and $x$, which has to satisfy
\begin{equation}\label{cond_ubb_intro}
\langle b(t,x),x\rangle \leq -K_1|x|^2(\log|x|)^{\alpha},\qquad\:\,t\in I,|x|\ge R,
\end{equation}
for some positive constants $K_1$, $\alpha>1$ and $R>1$.

Under some stronger condition than \eqref{cond_ubb_intro} on $\langle b(t,x),x\rangle$, we prove that $G(t,s)$ is bounded from $L^1(\Rd,\mu_s)$ to $L^2(\Rd,\mu_t)$, hence it is ultracontractive.

Then we extend supercontractivity, ultraboundedness and ultracontractivity to evolution operators associated to nonautonomous operators with non zero potential term.

Finally, we establish some consequences of the regularizing properties of $G(t,s)$.
More precisely, we get an $L^{\infty}$-estimate for the integral kernel $g_{t,s}$ of $G(t,s)$ (see \eqref{rep_nucleo_intro})
and some $L^2$-uniform integrability properties of $G(t,s)$.

The paper is organized as follows. First, in Section \ref{preliminary}, we state our main assumptions,
we collect some known results on the evolution operator $G(t,s)$ and we give the definition of supercontractivity, ultraboundedness and ultracontractivity in our nonautonomous setting.
Section \ref{section_super} is devoted to prove two criteria
for the supercontractivity property of $G(t,s)$.
In Section \ref{section_ultrabdd} we provide a characterization of ultraboundedness for
$G(t,s)$ in terms of the boundedness of the function $G(t,s)\varphi_\lambda$.
Section \ref{section_ultracontra} concerns the $L^1$-$L^2$ boundedness of $G(t,s)$ and the consequent ultracontractivity property.
Finally, in Section \ref{sect-6}, we collect some consequences of the ultracontractivity of $G(t,s)$.

\subsection*{Notations}
Let $k\in \N \cup \{0,+ \infty\}$, we consider the usual space $C^k(\Rd)$, as well as $C^k_b(\Rd)$,
the subspace of $C^k(\Rd)$ consisting of
bounded functions with bounded derivatives up to the $k$-th order.
We use the subscript ``$c$'' instead of ``$b$''  for the subsets of the
above spaces consisting of functions  with compact support.

If $J \subset \R$ is an interval and $\alpha\in (0,1)$, $C^{\alpha/2,\alpha}(J \times\Rd)$
denotes the usual parabolic H\"older space. We use the subscript ``loc'' to denote
the space of all $f\in C(J\times \Rd)$ which are $(\alpha/2,\alpha)$-H\"older continuous in any compact set of $J\times \Rd$.

Let $\mu$ be a probability measure on $\Rd$ and $1\leq p<\infty$. We denote by $L^p(\Rd, \mu)$ the
set of $\mu$-measurable functions $f:\Rd\to \R\cup\{\pm \infty\}$ such that $\|f\|_{p,\mu}^p:=\int_{\Rd}|f|^pd\mu(x)<+\infty$. When $d\mu=dx$ is the Lebesgue measure, we
simply write $\|f\|_p$.
If $p=+\infty$ then $L^{\infty}(\Rd, \mu)=L^\infty(\Rd)$ is endowed with the sup-norm $\|\cdot\|_{\infty}$.
The space $H^1(\Rd,\mu)$ consists of all the functions which belong to $L^2(\Rd, \mu)$ together with their first order distributional derivatives.

Let $T$ be an operator mapping $L^p(\Rd,\mu)$ to $L^q(\Rd,\nu)$  for
$1\leq p\leq q\le+\infty$ where $\mu,\nu$ are two probability measures on $\Rd$.
If no confusion may arise, we denote by $\|T\|_{p\to q}$ the operator norm
$\|T\|_{\mathcal{L}(L^p(\Rd,\mu),L^q(\Rd,\nu))}$.

About partial derivatives, the notations $D_tf:=\frac{\partial f}{\partial t}$,
$D_if:=\frac{\partial f}{\partial x_i}$, $D_{ij}f:=\frac{\partial^2f}{\partial x_i\partial x_j}$ are extensively used.

About matrices and vectors, we denote by $\textrm{Tr}(Q)$ and $\langle x,y\rangle$ the trace of the square matrix
$Q$ and the  inner product
of the vectors $x,y\in\Rd$, respectively.

The  ball in $\R^d$ centered at $0$ with  radius $r>0$ is denoted by $B(0,r)$.

Finally, we set $0 \log 0=0$ by definition.

\section{Assumptions, definitions and a review of some properties of $G(t,s)$}\label{preliminary}
Let $I$ be an open right  halfline. For $t\in I$ we consider
linear second order differential
operators $ \mathcal{A}(t) $
defined on smooth functions $\zeta$ by
\begin{align*}
(\mathcal{A}(t)\zeta)(x)&=\sum_{i,j=1}^d q_{ij}(t)D_{ij}\zeta(x)+
\sum_{i=1}^d b_i(t,x)D_i\zeta(x)\\
&= \textrm{Tr}(Q(t)D^2\zeta(x))+ \langle b(t,x), \nabla \zeta(x)\rangle,\qquad\;\, x\in\R^d,
\end{align*}
under the following assumptions on their coefficients.

\begin{hyp}\label{hyp1}
\begin{enumerate}[\rm (i)]
\item
$q_{ij}\in C^{\alpha/2}_{\rm loc}(I)$ and
$b_{i} \in C^{\alpha/2,\alpha}_{\rm loc}(I\times \R^d)$ $(i,j=1,\dots,d)$ for
some $\alpha \in (0,1)$;
\item
the matrix $Q(t)=[q_{ij}(t)]_{i,j=1, \ldots, d}$
is symmetric for every $t\in I$ and there exist $0<\eta_0<\Lambda$ such that
\begin{equation}
\label{ell}
\eta_0|\xi|^2\le \langle Q(t)\xi,\xi\rangle\leq \Lambda|\xi|^2,\qquad\;\, (t,\xi )\in I\times \Rd;
\end{equation}
\item
there exists $\varphi\in C^2(\R^d)$ with positive values such that
\begin{equation}
\label{Lyapunov}
\;\;\;\;\;\qquad\lim_{|x|\to +\infty}\varphi(x)=+\infty \quad\textrm{and}\quad
(\mathcal{A}(t)\varphi)(x)\leq a-\gamma\,\varphi(x),  \quad (t,x)\in I\times \Rd,
\end{equation}
for some positive constants $a$ and $\gamma$;
\item
the first order spatial derivatives of $b_i$ exist, belong to
$C^{\alpha/2,\alpha}_{\rm loc}(I\times \Rd)$ for any $i=1, \dots,d$, and
there exists  $r_0<0$ such that
\begin{equation}
\label{b}
\langle \nabla_x b(t,x)\xi,\xi\rangle\leq r_0|\xi|^2,\qquad\;\, (t,x)\in I\times \Rd,\;\, \xi \in \Rd.
\end{equation}
\end{enumerate}
\end{hyp}

\begin{rmk}
{\rm
Assumption \eqref{b} implies that for
any $[a,b]\subset I$ there exists a positive constant $C_{a,b}$ such that
\begin{equation}
\langle b(t,x),x\rangle\le C_{a,b},\qquad\;\,t\in [a,b],\;\,x\in\Rd.
\label{inequality-b}
\end{equation}
Indeed, condition \eqref{b} is equivalent to
\begin{eqnarray*}
\langle b(t,x)-b(t,y),x-y\rangle\le r_0|x-y|^2,\qquad\;\,t\in I,\;\,x,y\in\Rd.
\end{eqnarray*}
Taking $y=0$ and observing that $b$ is continuous, we get
\begin{eqnarray*}
\langle b(t,x),x\rangle\le \|b(\cdot,0)\|_{L^{\infty}(a,b)}|x|+r_0|x|^2,\qquad\;\,t\in [a,b],\;\,x\in\Rd,
\end{eqnarray*}
which implies \eqref{inequality-b} since $r_0<0$.
}
\end{rmk}

Hypotheses \ref{hyp1} yield the existence of
a Markov evolution operator $G(t,s)$ and a {\emph {unique} }(\cite[Rem. 2.8]{AngLorLun}) tight evolution system of
measures $\{\mu_t: t\in I\}$ associated to the evolution operator
$G(t,s)$ (where tight means that for any $\varepsilon>0$ there exists $R>0$
such that $\mu_t(B(0,R))\ge 1-\varepsilon$ for any $t\in I$). More precisely for every $s\in I$ and $f \in C_b(\Rd)$, $G(\cdot,s)f$ is
the unique bounded classical solution  of the Cauchy problem
\begin{eqnarray*}
\left\{
\begin{array}{ll}
D_tu(t,x)={\mathcal{A}}(t)u(t,x),\quad\quad & (t,x)\in (s,+\infty)\times \Rd,\\[1mm]
u(s,x)= f(x),\quad\quad & x\in \Rd.
\end{array}\right.
\end{eqnarray*}
Moreover, $G(\cdot,s)f$ belongs to $C_b([s,+\infty)\times \Rd)\cap C^{1+\alpha/2,2+\alpha}_{\rm loc}((s,+\infty)\times \R^d)$
and it can be represented by
\begin{equation}\label{rep_nucleo}
(G(t,s)f)(x)=\int_{\R^d}f(y)g_{t,s}(x,y)dy,\qquad\;\,x\in\R^d,
\end{equation}
for every $x\in \Rd$, $t>s\in I$ and $f\in C_b(\Rd)$. In \eqref{rep_nucleo},
 $g_{t,s}:\R^d\times\R^d\to\R$ is a positive function
 such that $\|g_{t,s}(x,\cdot)\|_1=1$ for every $t>s\in I$ and $x\in\Rd$
(\cite[Prop. 2.4]{KunLorLun09Non}).

From formula \eqref{rep_nucleo} the following result, which is extensively used in the paper, follows at once.
\begin{lemm}
\label{lemm-prel}
For any $I\ni s<t$ and any nonnegative and non identically vanishing function $f\in C_b(\Rd)$, $G(t,s)f$ is everywhere positive in $\Rd$.
In particular, $|G(t,s)g|\le G(t,s)|g|$ for any $g\in C_b(\Rd)$.
\end{lemm}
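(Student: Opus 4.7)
The plan is to read off both statements directly from the representation formula \eqref{rep_nucleo} together with the stated positivity of the integral kernel $g_{t,s}$, which the excerpt attributes to \cite[Prop. 2.4]{KunLorLun09Non}. There is no real obstacle here; the lemma is a packaging of properties of a Markov kernel, and the proof is essentially three lines once the representation is in hand.

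For the first assertion, I would argue as follows. Fix $s<t$ in $I$ and $x\in\Rd$. Since $f\in C_b(\Rd)$ is nonnegative and not identically zero, there is a point $y_0\in\Rd$ with $f(y_0)>0$; by continuity of $f$, there exist $\delta,\rho>0$ such that $f(y)\ge\delta$ for every $y\in B(y_0,\rho)$. The kernel $g_{t,s}(x,\cdot)$ is strictly positive on $\Rd$, hence on $B(y_0,\rho)$, so
\begin{equation*}
(G(t,s)f)(x)=\int_{\Rd}f(y)g_{t,s}(x,y)\,dy\ge \delta\int_{B(y_0,\rho)}g_{t,s}(x,y)\,dy>0,
\end{equation*}
which gives the claimed positivity for every $x\in\Rd$.

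For the second assertion, I would apply the elementary inequality $|\int h\,d\nu|\le \int |h|\,d\nu$ (valid for any positive measure $\nu$) to the measure $\nu=g_{t,s}(x,\cdot)\,dy$, which is a positive (in fact probability) measure thanks to $g_{t,s}(x,\cdot)\ge 0$ and $\|g_{t,s}(x,\cdot)\|_{1}=1$. Applied to $h=g$, this yields
\begin{equation*}
|(G(t,s)g)(x)|=\bigg|\int_{\Rd}g(y)g_{t,s}(x,y)\,dy\bigg|\le \int_{\Rd}|g(y)|g_{t,s}(x,y)\,dy=(G(t,s)|g|)(x)
\end{equation*}
for every $x\in\Rd$, which is exactly the stated pointwise bound. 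The only point worth flagging is that $|g|$ is itself an element of $C_b(\Rd)$, so $G(t,s)|g|$ is defined via the same representation formula; no extension argument is needed at this stage.
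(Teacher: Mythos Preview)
Your proof is correct and follows exactly the approach indicated in the paper, which simply states that the lemma ``follows at once'' from the representation formula \eqref{rep_nucleo} and the strict positivity of the kernel $g_{t,s}$. You have merely spelled out the three lines that the paper leaves implicit.
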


By Lemma \ref{lemm-prel}, formula
 \eqref{invariance_intro} and the density of $C_b(\Rd)$ in $L^p(\R^d, \mu_s)$ we have
\begin{eqnarray*}
\|G(t,s)f\|_{p,\mu_t}  \leq \| f\|_{p,\mu_s},
\end{eqnarray*}
for every $t>s$, $p\in [1,+\infty)$ and $f\in L^p(\Rd,\mu_s)$.
Therefore, $G(t,s)$ may be extended to
a contraction (still denoted by $G(t,s)$)  from  $L^p(\R^d, \mu_s)$ to $L^p(\R^d, \mu_t)$.

The dissipativity condition \eqref{b} yields the pointwise gradient estimate
\begin{equation}\label{grad_est_punt}
|(\nabla_x G(t,s)f)(x)|^p\leq e^{pr_0(t-s)}(G(t,s)|\nabla f|^p)(x),
\end{equation}
which holds for every $f \in C^1_b(\Rd)$, $t\geq s$, $x \in \Rd$ and
$p\in [1,+\infty)$ (\cite[Thm. 4.5]{KunLorLun09Non}).

Other remarkable (smoothing) properties of the evolution operator $G(t,s)$ and of the associated evolution system of measures $\{\mu_t: t\in I\}$, which are extensively used in this paper, are stated in the
following two propositions and they can be proved assuming only Hypotheses \ref{hyp1}(i)-(iii).

\begin{prop}[{\cite[Lemma 3.2]{KunLorLun09Non}}]
\label{deriv}
For any $f\in C^2(\Rd)$, which is constant outside a compact set, and any $t\in I$, the function $G(t,\cdot)f$ is differentiable in $I\cap (-\infty,t]$ and
\begin{eqnarray*}
\frac{d}{ds}G(t,s)f=-G(t,s)\mathcal{A}(s)f,\qquad\;\,s\in I,\;\,s\le t.
\end{eqnarray*}
\end{prop}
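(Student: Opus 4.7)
The plan is to exploit the evolution law $G(t,s)=G(t,\sigma)G(\sigma,s)$ to relate the increment $G(t,s+h)f-G(t,s)f$ to the increment of $\sigma\mapsto G(\sigma,s)f$ at $\sigma=s$. For $s<t$ and $h>0$ with $s+h\le t$, taking $\sigma=s+h$ gives
\begin{align*}
\frac{G(t,s+h)f-G(t,s)f}{h}&=G(t,s+h)\bigg[\frac{f-G(s+h,s)f}{h}\bigg]\\
&=-\frac{1}{h}\int_s^{s+h}G(t,s+h)\mathcal{A}(\tau)G(\tau,s)f\,d\tau,
\end{align*}
the last equality using that $G(\cdot,s)f$ is a classical solution of $D_\tau u=\mathcal{A}(\tau)u$. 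The analogous identity with $G(t,s)$ in front, coming from the factorisation $G(t,s-h)=G(t,s)G(s,s-h)$, will handle the left derivative at $s$. Since the statement requires pointwise (in $x$) differentiability, it suffices to prove, for each $x\in\Rd$, that
\begin{equation*}
\frac{1}{h}\int_s^{s+h}\int_{\Rd}g_{t,s+h}(x,y)(\mathcal{A}(\tau)G(\tau,s)f)(y)\,dy\,d\tau\longrightarrow(G(t,s)\mathcal{A}(s)f)(x)
\end{equation*}
as $h\to 0^+$.

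For fixed $x$ the natural tool is dominated convergence in the double limit $(\tau,h)\to(s,0)$. The pointwise convergence $(\mathcal{A}(\tau)G(\tau,s)f)(y)\to(\mathcal{A}(s)f)(y)$ as $\tau\to s^+$ is classical, since $G(\cdot,s)f\in C^{1+\alpha/2,2+\alpha}_{\rm loc}((s,+\infty)\times\Rd)$ and the coefficients $q_{ij},b_i$ are locally H\"older continuous in time. Continuity of the outer kernel $h\mapsto g_{t,s+h}(x,\cdot)$ in $L^1(\Rd,dy)$ follows from standard stability of the evolution operator (equivalently, $G(t,s+h)g\to G(t,s)g$ as $h\to 0$ for $g\in C_b(\Rd)$, which can be read off the representation \eqref{rep_nucleo}). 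So the crux is to produce a dominating function for $(\mathcal{A}(\tau)G(\tau,s)f)(y)$ which is integrable against $g_{t,s}(x,\cdot)$, uniformly for $(\tau,h)$ in a small right-neighbourhood of $(s,0)$.

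Because $f$ is constant outside a compact set $K$, both $\nabla f$ and $D^2f$ vanish outside $K$. The pointwise gradient estimate \eqref{grad_est_punt} then yields $|\nabla G(\tau,s)f(y)|\le\|\nabla f\|_\infty$, so the drift contribution $\langle b(\tau,y),\nabla G(\tau,s)f(y)\rangle$ is dominated by a constant times $|b(\tau,y)|$, which grows at most affinely in $y$ by \eqref{inequality-b} and is integrable against $g_{t,s}(x,\cdot)$ thanks to the Lyapunov bound \eqref{Lyapunov} (one uses that $G(t,s)\varphi<+\infty$ with $\varphi$ growing at least like $|y|^2$). For the second-order part $\textrm{Tr}(Q(\tau)D^2G(\tau,s)f)$, interior parabolic Schauder estimates provide a uniform-in-$\tau$ bound on any fixed ball, while outside a large ball one combines the representation $G(\tau,s)f(y)=\int_K g_{\tau,s}(y,z)f(z)\,dz$ with a locality/concentration estimate on $g_{\tau,s}(y,\cdot)$ (via cut-offs of the Lyapunov function) to show that $D^2G(\tau,s)f$ decays at infinity fast enough to be harmlessly dominated.

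The main obstacle I foresee is exactly this uniform control of $D^2G(\tau,s)f$ for $\tau$ near $s$: the gradient estimate \eqref{grad_est_punt} is only a first-order tool, so obtaining a dominating function for the second-order term that is integrable against the outer kernel uniformly in $\tau$ is the delicate technical step. Once this is settled, dominated convergence together with the pointwise limits above yields the desired convergence, hence the differentiability of $s\mapsto G(t,s)f$ and the identity $\frac{d}{ds}G(t,s)f=-G(t,s)\mathcal{A}(s)f$ for every $s\in I\cap(-\infty,t]$.
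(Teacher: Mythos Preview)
The paper does not prove this statement: it is quoted verbatim from \cite[Lemma~3.2]{KunLorLun09Non} with no argument given here, so there is no ``paper's own proof'' to compare with. I will therefore comment only on the soundness of your sketch.

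Your reduction via the evolution law is the right starting point, but the dominated-convergence argument you propose does not go through under Hypotheses~\ref{hyp1}. Two concrete problems:
\begin{itemize}
\item You claim that $|b(\tau,y)|$ grows at most affinely in $y$ by \eqref{inequality-b}. That inequality only bounds $\langle b(t,x),x\rangle$, and the dissipativity condition \eqref{b} bounds only the \emph{symmetric} part of $\nabla_x b$ from above, with no lower bound. In one dimension, $b(x)=-x-x^{3}$ satisfies \eqref{b} with $r_0=-1$ but $|b(x)|\sim|x|^{3}$. So the drift term $\langle b(\tau,y),\nabla_x G(\tau,s)f(y)\rangle$ is, in general, genuinely unbounded in $y$ once $\tau>s$ (because $\nabla_x G(\tau,s)f$ is no longer compactly supported), and your appeal to the Lyapunov function does not help either: $\varphi$ in Hypothesis~\ref{hyp1}(iii) is only assumed to diverge at infinity, not to grow like $|y|^{2}$.
\item For the same reason, the very expression $G(t,s+h)\big[\mathcal{A}(\tau)G(\tau,s)f\big]$ in your integral formula is not a priori well defined through the kernel representation \eqref{rep_nucleo}, since the argument is neither bounded nor of fixed sign. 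So the identity you write down after the evolution law is itself not justified, independently of the later limiting step.
\end{itemize}

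You correctly flag the control of $D^{2}G(\tau,s)f$ as a difficulty, but the deeper obstruction is the first-order term with the unbounded drift, which you treat as settled. The standard way around all of this (and the route taken in \cite{KunLorLun09Non}) is to avoid $\mathcal{A}(\tau)G(\tau,s)f$ for $\tau>s$ altogether: one works with the approximating evolution operators $G_n(t,s)$ on balls $B(0,n)$ (Dirichlet problems), where the coefficients are bounded and the backward equation $\partial_s G_n(t,s)f=-G_n(t,s)\mathcal{A}(s)f$ holds by classical parabolic theory. Since $f$ is constant outside a compact set, $\mathcal{A}(s)f\in C_c(\Rd)$ is bounded, so $G_n(t,s)\mathcal{A}(s)f\to G(t,s)\mathcal{A}(s)f$ locally uniformly, and one passes to the limit in the integrated identity $G_n(t,s)f-G_n(t,s')f=\int_{s}^{s'}G_n(t,\sigma)\mathcal{A}(\sigma)f\,d\sigma$. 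This sidesteps both issues above.
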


\begin{prop}[{\cite[Lemma 3.1]{AngLorLun}}]
\label{lemma-3.1}
Let $[a,b]\subset I$. If $f\in C^{1,2}_b([a,b]\times\Rd)$ is such that $f(r,\cdot)$ is constant outside a compact set $K$ for every $r\in [a,b]$, then the function $r \mapsto \int_{\Rd}f(r,x)d\mu_r(x) $ is continuously differentiable in $[a,b]$ and
\begin{eqnarray*}
\frac{d}{dr}\int_{\Rd}f(r,x) d\mu_r(x) =\int_{\Rd}D_rf(r,x)d\mu_r(x)  -\int_{\Rd} (\mathcal{A}(r)f(r, \cdot))(x)d\mu_r(x) ,
\end{eqnarray*}
for every  $r \in [a,b]$.
\end{prop}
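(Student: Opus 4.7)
The plan is to use the invariance property \eqref{invariance_intro} to transfer differentiation of the time-varying family $\{\mu_r\}$ to differentiation of $G(t,s)$ in its second argument, where Proposition \ref{deriv} applies. Fix $t\ge b$ and set $F(r):=\int_{\Rd}f(r,x)\,d\mu_r(x)$. By \eqref{invariance_intro},
\begin{equation*}
F(r)=\int_{\Rd}\bigl(G(t,r)f(r,\cdot)\bigr)(x)\,d\mu_t(x),\qquad r\in[a,b],
\end{equation*}
so the problem reduces to differentiating under the fixed reference measure $\mu_t$.

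Pointwise in $x$, I would decompose the difference quotient as
\begin{equation*}
\frac{G(t,r+h)f(r+h,\cdot)-G(t,r)f(r,\cdot)}{h}=G(t,r+h)\!\left[\frac{f(r+h,\cdot)-f(r,\cdot)}{h}\right]+\frac{[G(t,r+h)-G(t,r)]f(r,\cdot)}{h}.
\end{equation*}
Since $f(r,\cdot)\in C^2(\Rd)$ is constant outside the compact set $K$, Proposition \ref{deriv} immediately gives that the second summand converges pointwise to $-G(t,r)\mathcal{A}(r)f(r,\cdot)(x)$. For the first summand, I would argue that $h^{-1}[f(r+h,\cdot)-f(r,\cdot)]\to D_rf(r,\cdot)$ \emph{uniformly} on $\Rd$: on $K$ this follows by the mean value theorem from the uniform continuity of $D_rf$ on the compact set $[a,b]\times K$, while for $x\notin K$ the function $f(\cdot,x)$ depends only on $r$ (being constant in $x$), so uniform convergence on this region reduces to uniform continuity of a single real-variable function on $[a,b]$. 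Combined with the contractivity of $G(t,r+h)$ on $C_b(\Rd)$ and the pointwise continuity $s\mapsto(G(t,s)g)(x)$ for $g\in C_b(\Rd)$, the first summand then converges pointwise to $G(t,r)D_rf(r,\cdot)(x)$.

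Once the pointwise derivative is identified, I would observe that the difference quotient is uniformly bounded on $[a,b]\times\Rd$ by $\|D_rf\|_{\infty}+\sup_{r\in[a,b]}\|\mathcal{A}(r)f(r,\cdot)\|_{\infty}$, both finite because the coefficients of $\mathcal{A}(r)$ are bounded on $K$ and $\mathcal{A}(r)f(r,\cdot)$ is supported in $K$. Dominated convergence then permits the interchange of derivative and integral:
\begin{equation*}
\frac{d}{dr}F(r)=\int_{\Rd}\bigl(G(t,r)D_rf(r,\cdot)\bigr)(x)\,d\mu_t(x)-\int_{\Rd}\bigl(G(t,r)\mathcal{A}(r)f(r,\cdot)\bigr)(x)\,d\mu_t(x).
\end{equation*}
Applying \eqref{invariance_intro} a second time to each term on the right (legitimate, since $D_rf(r,\cdot),\mathcal{A}(r)f(r,\cdot)\in C_b(\Rd)$) replaces $\mu_t$ by $\mu_r$, yielding the stated formula. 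Continuity in $r$ of the derivative will follow from pointwise continuity in $r$ of the two integrands together with the same uniform bound, by another dominated convergence argument.

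The main obstacle is justifying the pointwise continuity of $s\mapsto(G(t,s)g)(x)$ at $s=r$ when $g\in C_b(\Rd)$ is merely continuous and bounded: the natural candidate $g=D_rf(r,\cdot)$ is not $a$ $C^2$ function, so Proposition \ref{deriv} is not directly available. I would establish this from the Markov evolution law $G(t,s_1)=G(t,s_2)G(s_2,s_1)$, the continuity of the Cauchy problem solution $G(s_2,s_1)g$ at the initial time, and dominated convergence applied to the kernel representation \eqref{rep_nucleo}.
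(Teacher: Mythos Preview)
The paper does not supply its own proof of this proposition: it is quoted verbatim from \cite[Lemma 3.1]{AngLorLun} and simply cited. There is therefore no in-paper argument to compare against.

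On its own merits, your strategy is the natural one and it works. Passing to a fixed reference measure $\mu_t$ via \eqref{invariance_intro}, decomposing the difference quotient, invoking Proposition~\ref{deriv} for the $s$-derivative of $G(t,s)$ on the $C^2$ slice $f(r,\cdot)$, and using dominated convergence with the uniform bound $\|D_rf\|_\infty+\sup_{r\in[a,b]}\|\mathcal{A}(r)f(r,\cdot)\|_\infty$ is exactly the right skeleton.

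The one point that deserves a cleaner treatment is the obstacle you flagged, namely the continuity of $s\mapsto (G(t,s)g)(x)$ for $g=D_rf(r,\cdot)\in C_b(\Rd)$. Your proposed resolution via the evolution law and dominated convergence on the kernel is not quite airtight: for $h>0$ you end up with $G(t,r+h)[g-G(r+h,r)g]$, where the dominating kernel $g_{t,r+h}(x,\cdot)$ itself varies with $h$, so the standard DCT does not apply directly; and for $h<0$ you would need continuity of $s\mapsto G(r,s)g$ in the \emph{initial} time, which is not the ``continuity at the initial time'' statement available from $G(\cdot,s)g\in C_b([s,+\infty)\times\Rd)$. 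A simpler fix, entirely within the tools the paper provides, is to observe that $D_rf(r,\cdot)$ is continuous and constant outside $K$, hence uniformly approximable by functions $\tilde g\in C^2(\Rd)$ constant outside a compact set; for such $\tilde g$ Proposition~\ref{deriv} gives differentiability (hence continuity) of $s\mapsto G(t,s)\tilde g$, and the contractivity of $G(t,s)$ on $C_b(\Rd)$ transfers this continuity to $g$ via a standard $\varepsilon/3$ argument. With this adjustment your proof goes through.
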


The aim of this paper, as already announced in the introduction, is to study the
smoothing effects of the evolution operator $G(t,s)$ on functions with a certain degree of summability. As in the autonomous case we can distinguish different levels of
regularization as specified in the following definition.

\begin{defi}\label{definitions}
The evolution operator $G(t,s)$ is called:
\begin{enumerate}[\rm (i)]
\item ``supercontractive'' if it maps $L^p(\Rd, \mu_s)$ into $L^q(\Rd,\mu_t)$ for any $1<p<q<+\infty$ and $t>s$, and there exists a positive decreasing function $C_{p,q}:(0,+\infty)\to
(0,+\infty)$, such that $\lim_{r\to 0^+}C_{p,q}(r)=+\infty$ and
\begin{eqnarray*}
\|G(t,s)\|_{p\to q}\le C_{p,q}(t-s),\qquad\;\,I\ni s<t;
\end{eqnarray*}
\item ``ultrabounded'' if it maps $L^p(\Rd, \mu_s)$ into $L^\infty(\Rd)$ for every $p>1$ and $t>s$, and there
exists a decreasing function $C_{p,\infty}:(0,+\infty)\to (0,+\infty)$ such that
$\lim_{r\to 0^+}C_{p,\infty}(r)=+\infty$ and
\begin{equation}\label{est_ultrabdd}
\|G(t,s)f\|_{p \to\infty}\leq C_{p,\infty}(t-s),\qquad\;\,I\ni s<t;
\end{equation}
\item ``ultracontractive'' if it maps $L^p(\Rd, \mu_s)$ into $L^\infty(\Rd)$ for every $p\geq 1$ and \eqref{est_ultrabdd} holds for every $p\ge1$.
\end{enumerate}
\end{defi}

\begin{rmk}{\rm
  \begin{enumerate}[\rm(i)]
    \item
The definitions of supercontractivity, ultraboundedness and ultracontractivity given in
Definition \ref{definitions}, where the functions $C_{p,q}$ depend on $t-s$, seem to be the most natural. Indeed, we recall that, if $T(t)$ is a semigroup, then $T(t-s)$ is an evolution operator and the definitions above are the natural extension of those given in the autonomous case.
\item
The strong Feller property enjoyed by the evolution operator (\cite[Cor. 4.3]{KunLorLun09Non}) states that $G(t,s)$ maps $L^\infty(\Rd)$ into $C_b(\Rd)$ for every $t>s$ and it is a contraction, i.e., for every $f\in L^\infty(\Rd)$
\begin{eqnarray*}
\|G(t,s)f\|_{\infty}\leq \|f\|_{\infty},\qquad \;\,I\ni s<t.
\end{eqnarray*}
         Therefore, if $G(t,s)$ is ultrabounded (resp. ultracontractive) then, in fact, it maps $L^p(\Rd,\mu_s)$ into $C_b(\Rd)$ for every $p>1$ (resp. $p\ge 1$) and $t>s$.
  \end{enumerate}}
\end{rmk}

\noindent
\emph{Throughout the paper, if not otherwise specified, we assume that all the conditions in Hypotheses $\ref{hyp1}$ are satisfied.}

\section{Supercontractivity and LSI$_\varepsilon$}\label{section_super}

In this section we provide two criteria to characterize the supercontractivity of the evolution operator $G(t,s)$
by means of a family of logarithmic Sobolev inequalities.

\subsection{The first criterion}
In this subsection we are devoted to prove the following result.

\begin{thm}\label{super_caract}
The following properties are equivalent.
\begin{enumerate}[\rm (i)]
\item
The evolution operator $G(t,s)$ is supercontractive;
\item
the family of logarithmic
Sobolev inequalities
\begin{equation}\label{fam_log_sob_prop}
\int_{\Rd} f^2\log \left(\frac{|f|}{\|f\|_{2,\mu_s}}\right)\,d\mu_s(x) \le \varepsilon \|\,|\nabla f|\,\|_{2,\mu_s}^2+\beta(\varepsilon)\|f\|_{2,\mu_s}^2
\tag{LSI$_\varepsilon$}
\end{equation}
holds for every $f\in H^1(\Rd,\mu_s)$,\, $s\in I$,\, $\varepsilon >0$ and
some positive decreasing function $\beta:(0,+\infty)\to(0,+\infty)$, blowing up as $\varepsilon\to 0^+$.
\end{enumerate}
\end{thm}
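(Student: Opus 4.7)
\textbf{For (ii) $\Rightarrow$ (i),} I follow the classical Gross differential scheme. Fix $1 < p < q$, $I \ni s < t$ and a positive $f \in C_c^\infty(\Rd)$ (extending by density). Set $u(r,\cdot) := G(r,s)f$, $q(r) := p + (q-p)(r-s)/(t-s)$, $F(r) := \|u(r,\cdot)\|_{q(r),\mu_r}$ and $N(r) := F(r)^{q(r)}$. Using Propositions \ref{deriv} and \ref{lemma-3.1} (with a standard cutoff/approximation argument to ensure the required regularity), I compute
\begin{equation*}
\frac{d}{dr}\log F(r) = \frac{q'(r)}{q(r)^2 N(r)} \int_{\Rd} u^{q(r)} \log\frac{u^{q(r)}}{N(r)} d\mu_r - \frac{q(r)-1}{N(r)} \int_{\Rd} u^{q(r)-2} \langle Q(r)\nabla u, \nabla u\rangle d\mu_r.
\end{equation*}
Applying LSI$_\varepsilon$ to $v := u^{q(r)/2}$ at time $r$ with the choice $\varepsilon(r) := 2(q(r)-1)\eta_0/q'(r)$, the Dirichlet-form contributions cancel via the ellipticity \eqref{ell}, leaving $(\log F)'(r) \le 2 q'(r) \beta(\varepsilon(r))/q(r)^2$. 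Integrating from $s$ to $t$ and exponentiating yields $\|G(t,s)f\|_{q,\mu_t} \le C_{p,q}(t-s)\|f\|_{p,\mu_s}$ with a finite positive $C_{p,q}(t-s)$ that blows up as $t - s \to 0^+$, since $\varepsilon(r) \propto (t-s)$ vanishes while $\beta \nearrow +\infty$.

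\textbf{For (i) $\Rightarrow$ (ii),} the starting point is the kernel LSI \eqref{log_sob_G_2_intro}. Integrating it against $d\mu_t$ and using the invariance \eqref{invariance_intro} produces
\begin{equation*}
\mathrm{Ent}_{\mu_s}(f^2) \le c(t-s)\|\,|\nabla f|\,\|_{2,\mu_s}^2 + \mathrm{Ent}_{\mu_t}(G(t,s)f^2),\qquad c(\tau) := \frac{4\Lambda}{|r_0|}(1-e^{2r_0\tau}),
\end{equation*}
recalling that $\mathrm{Ent}_{\mu_s}(f^2) = 2\int_{\Rd} f^2 \log(|f|/\|f\|_{2,\mu_s}) d\mu_s$. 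For any $\varepsilon \in (0, 2\Lambda/|r_0|)$, I pick $\tau = \tau(\varepsilon) > 0$ with $c(\tau) = 2\varepsilon$; larger $\varepsilon$'s are trivial by letting $\tau \to +\infty$ in the kernel LSI. It remains to dominate $\mathrm{Ent}_{\mu_{s+\tau}}(G(s+\tau,s)f^2)$ by $2\beta(\varepsilon)\|f\|_{2,\mu_s}^2$. Normalising $\|f\|_{2,\mu_s}=1$ so that $h := G(s+\tau,s)f^2$ is a probability density with respect to $\mu_{s+\tau}$, I apply the elementary bound $\mathrm{Ent}_{\mu_{s+\tau}}(h) \le a^{-1}(\|h\|_{1+a,\mu_{s+\tau}}^{1+a}-1)$, valid for any $a > 0$, and then invoke supercontractivity together with Riesz--Thorin interpolation against the $L^1$-contractivity of $G(s+\tau,s)$ in order to estimate $\|h\|_{1+a,\mu_{s+\tau}}$ in terms of $\|f\|_{2,\mu_s}$ alone, which produces the required function $\beta(\varepsilon)$.

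\textbf{Main obstacle.} The direction (i) $\Rightarrow$ (ii) is the delicate one: supercontractivity supplies $L^p \to L^q$ bounds only for $p > 1$, whereas $f^2$ naturally lives in $L^1(\mu_s)$. Bridging this mismatch via Riesz--Thorin interpolation with the $L^1$-contractivity, and then closing the estimate of $\mathrm{Ent}_{\mu_{s+\tau}}(G(s+\tau,s)f^2)$ in terms of $\|f\|_{2,\mu_s}^2$ only (rather than of higher-order norms such as $\|f\|_{2p,\mu_s}$ that arise from the naive application of supercontractivity to $f^2$), together with checking that the resulting $\beta(\varepsilon)$ is decreasing and blows up at $0^+$, is where the bulk of the technical work lies.
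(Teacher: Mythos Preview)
Your treatment of (ii)$\Rightarrow$(i) is essentially the paper's Gross scheme; the only cosmetic difference is that you take $q(r)$ linear in $r$ and let $\varepsilon(r)=2(q(r)-1)\eta_0/q'(r)$ vary, whereas the paper fixes $\varepsilon$ and chooses $q(r)=1+(p-1)e^{2\eta_0\varepsilon^{-1}(r-s)}$ so that the same relation holds with constant $\varepsilon$. Both choices make the Dirichlet term cancel and both give a decreasing $C_{p,q}(t-s)$ blowing up at $0^+$, so this direction is fine.

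The direction (i)$\Rightarrow$(ii), however, has a genuine gap at the very point you flag as the ``main obstacle''. With $\|f\|_{2,\mu_s}=1$ and $h:=G(s+\tau,s)f^2$, you propose to bound $\mathrm{Ent}_{\mu_{s+\tau}}(h)\le a^{-1}(\|h\|_{1+a,\mu_{s+\tau}}^{1+a}-1)$ and then control $\|h\|_{1+a,\mu_{s+\tau}}$ by Riesz--Thorin between the $L^1$-contraction and the supercontractive $L^p\!\to\!L^q$ bound. But Riesz--Thorin between $L^1\!\to\!L^1$ and $L^p\!\to\!L^q$ produces only $L^{p_\theta}\!\to\!L^{q_\theta}$ with $p_\theta>1$ as soon as $q_\theta>1$; there is no interpolated $L^1\!\to\!L^{1+a}$ bound. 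Hence any estimate of $\|h\|_{1+a}$ along these lines forces $\|f^2\|_{p_\theta,\mu_s}=\|f\|_{2p_\theta,\mu_s}^2$ with $p_\theta>1$ on the right-hand side, which is precisely the higher-order norm you say must be avoided. For fixed $a>0$ the argument simply does not close.

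The paper's way around this is not to bound $\|h\|_{1+a}$ at all, but to apply the interpolated $L^{p_h}\!\to\!L^{q_h}$ bound to $|f|^{2(1-h)}$ (note: not to $f^2$), so that $\||f|^{2(1-h)}\|_{p_h,\mu_s}^{p_h}=\|f\|_{2,\mu_s}^2=1$ exactly, and then to \emph{differentiate} the resulting inequality $\int (G(t,s)|f|^{2(1-h)})^{q_h}\,d\mu_t\le C_{p,q}(t-s)^{r_hq_h}$ at $h=0$. The derivative of the left-hand side produces both $\int (G(t,s)f^2)\log(G(t,s)f^2)\,d\mu_t$ and $-\int f^2\log f^2\,d\mu_s$; the latter enters with coefficient $\frac{q(p-1)}{p(q-1)}<1$, so it can be absorbed into the left-hand side of the integrated kernel LSI, and one closes on $\mathrm{Ent}_{\mu_s}(f^2)$ alone. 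Your $a\to0$ limit, if carried out carefully, would reproduce exactly this computation; but as written, with fixed $a>0$, the step ``estimate $\|h\|_{1+a,\mu_{s+\tau}}$ in terms of $\|f\|_{2,\mu_s}$ alone'' is not achievable.
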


The proof of Theorem \ref{super_caract} is based on the following two propositions.
In the first one, we prove a logarithmic Sobolev
inequality satisfied by the evolution operator $G(t,s)$, namely a LSI type estimate satisfied
by the probability measures
$g_{t,s}(x, y)\,dy$ in place of the invariant measures $\mu_s$.

\begin{prop}
For every $f\in C^1_b(\Rd)$, $p \in [2,+\infty)$ and $t,s \in I$, with $t\ge s$, we have
\begin{align}\label{log_sob_G}
G(t,s)(|f|^p \log |f|^p)\leq &  \frac{p^2 \Lambda}{|r_0|} (1-e^{2r_0(t-s)})G(t,s)(|f|^{p-2}|\nabla f|^2)\nnm\\
&+(G(t,s)|f|^p) \log (G(t,s)|f|^p).
\end{align}
\end{prop}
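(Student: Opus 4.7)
The plan is to carry out the Bakry--\'Emery ``interpolation'' argument adapted to the nonautonomous setting. Fix $f\in C^1_b(\Rd)$, $p\ge 2$, and $I\ni s<t$; set $F:=|f|^p$, $h(u):=u\log u$, and, for $r\in[s,t]$, introduce
\begin{eqnarray*}
u(r,\cdot):=G(r,s)F,\qquad H(r):=G(t,r)[h(u(r,\cdot))].
\end{eqnarray*}
Then $H(s)=G(t,s)(F\log F)$ and $H(t)=(G(t,s)F)\log(G(t,s)F)$, so \eqref{log_sob_G} will follow from a suitable upper bound on $H(s)-H(t)=-\int_s^t H'(r)\,dr$.

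The first key step is to compute $H'(r)$. Combining the backward differentiation rule of Proposition \ref{deriv} with the forward equation $\partial_r u(r,\cdot)=\mathcal{A}(r)u(r,\cdot)$ and the chain rule
\begin{eqnarray*}
\mathcal{A}(r)(h\circ u)=h'(u)\mathcal{A}(r)u+h''(u)\langle Q(r)\nabla u,\nabla u\rangle,
\end{eqnarray*}
the two contributions involving $h'(u)\mathcal{A}(r)u$ cancel, and since $h''(u)=u^{-1}$ one is left with
\begin{eqnarray*}
H'(r)=-G(t,r)\!\left[\frac{\langle Q(r)\nabla u(r,\cdot),\nabla u(r,\cdot)\rangle}{u(r,\cdot)}\right]\le 0,
\end{eqnarray*}
the sign being the one expected from Jensen's inequality.

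The second step is the pointwise control of $|\nabla u|^2/u$. The gradient estimate \eqref{grad_est_punt} with exponent $1$, applied to $F=|f|^p$, gives
\begin{eqnarray*}
|\nabla u(r,\cdot)|\le p\,e^{r_0(r-s)}G(r,s)(|f|^{p-1}|\nabla f|),
\end{eqnarray*}
and the Cauchy--Schwarz inequality applied to the probability kernel $g_{r,s}(x,\cdot)$, with the splitting $|f|^{p-1}|\nabla f|=|f|^{p/2}\cdot|f|^{p/2-1}|\nabla f|$, yields
\begin{eqnarray*}
|\nabla u|^2\le p^2 e^{2r_0(r-s)}\,u\,G(r,s)(|f|^{p-2}|\nabla f|^2).
\end{eqnarray*}
Combining with the ellipticity bound $\langle Q(r)\xi,\xi\rangle\le\Lambda|\xi|^2$ from \eqref{ell}, integrating in $r\in[s,t]$, invoking the evolution law $G(t,r)G(r,s)=G(t,s)$ to pull $G(t,s)(|f|^{p-2}|\nabla f|^2)$ out of the time integral, and computing the elementary integral $\int_s^t e^{2r_0(r-s)}\,dr=(1-e^{2r_0(t-s)})/(2|r_0|)$ delivers \eqref{log_sob_G}.

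The main technical obstacle is the rigorous justification of the differentiation of $H$: the function $F=|f|^p$ is only $C^1$ in general, it is not constant outside a compact set (so Proposition \ref{deriv} does not apply verbatim), and $u$ may vanish wherever $F$ does, making the factor $u^{-1}=h''(u)$ singular. I would circumvent this by first establishing the inequality for regularized data of the form $F_{\delta,n}:=\chi_n(f^2+\delta)^{p/2}+(1-\chi_n)\delta^{p/2}$, where $\chi_n$ is a smooth cut-off equal to $1$ on $B(0,n)$ and $\delta>0$, after first mollifying $f$ to gain $C^2$-regularity. Such data are $C^2$, constant outside a compact set, and bounded below by $\delta^{p/2}>0$, so by Lemma \ref{lemm-prel} the corresponding $u_{\delta,n}$ stay uniformly away from zero and all the manipulations above become legitimate. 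Passing to the limits $n\to+\infty$ and $\delta\to 0^+$ via dominated convergence, together with the continuity properties of $G(t,s)$ and the pointwise gradient estimate \eqref{grad_est_punt}, then yields \eqref{log_sob_G} for a general $f\in C^1_b(\Rd)$.
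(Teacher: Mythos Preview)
Your interpolation argument is sound and in fact cleaner than the paper's route. By taking the datum $F=|f|^p$ and the entropy $h(u)=u\log u$, you obtain the neat identity $H'(r)=-G(t,r)\big[\langle Q(r)\nabla u,\nabla u\rangle/u\big]$; then the $L^1$-gradient estimate \eqref{grad_est_punt} combined with Cauchy--Schwarz on the probability kernel gives $|\nabla u|^2/u\le p^2e^{2r_0(r-s)}G(r,s)(|f|^{p-2}|\nabla f|^2)$, and integration in $r$ produces the constant $\tfrac{p^2\Lambda}{2|r_0|}$ --- sharper by a factor $2$ than the stated $\tfrac{p^2\Lambda}{|r_0|}$. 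The paper proceeds differently: it first reduces to $p=2$, works with $(G(r,s)f)^2\log(G(r,s)f)^2$ (square \emph{outside} the evolution, not inside), normalizes $\|f\|_\infty\le 1$ so that $g^2\log g^2\le 0$ in order to control the cut-off error term, inserts the cut-off $\theta_n$ at the intermediate time $r$ (not at the data level $s$), and then applies the $L^2$-gradient estimate directly rather than your $L^1$-estimate plus Cauchy--Schwarz. Your approach also lands on the entropy term $(G(t,s)|f|^p)\log(G(t,s)|f|^p)$ of the statement directly, without having to pass through $|f|^{p/2}$.

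There is, however, one genuine technical gap in your regularization scheme. Making the initial datum $F_{\delta,n}$ smooth, bounded below, and equal to the constant $\delta^{p/2}$ outside a compact set does \emph{not} make $h(u_{\delta,n}(r,\cdot))$ constant outside a compact set: the evolution $G(r,s)$ does not preserve this property, because the Green kernel $g_{r,s}(x,\cdot)$ has full support. Consequently Proposition~\ref{deriv} cannot be invoked to differentiate $r\mapsto G(t,r)[h(u_{\delta,n}(r,\cdot))]$ as written. The remedy is exactly the device the paper uses: insert a \emph{second} cut-off $\theta_m$ at the $r$-level, i.e.\ work with $r\mapsto G(t,r)\big[\theta_m\,h(u_{\delta,n}(r,\cdot))\big]$, differentiate legitimately, bound the extra terms involving $\mathcal A(r)\theta_m$ and $\nabla\theta_m$ via the drift estimate \eqref{inequality-b} (as in \eqref{Artheta}), and then let $m\to\infty$ before $n\to\infty$ and $\delta\to 0^+$. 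With this additional layer your argument goes through and indeed yields the sharper version of \eqref{log_sob_G}.
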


\begin{proof}
We can limit ourselves to proving \eqref{log_sob_G} for $p=2$. Indeed,
for every $p>2$ and $f\in C^1_b(\Rd)$, the claim can be obtained
applying \eqref{log_sob_G} with $p=2$ to the function $|f|^{p/2}$.
Moreover, it is enough to prove \eqref{log_sob_G}, with $p=2$, for nonnegative functions $f \in C^1_b(\Rd)$ with $\sup_{\Rd} f \leq 1$,
taking into account that (by \eqref{rep_nucleo}) $G(t,s)c=c$ for every $c\in \R$.
To this aim we introduce a standard sequence of cut-off functions
\begin{equation}
\label{thetan}
\theta_n(x)=\eta\left(\frac{|x|}{n}\right), \qquad\;\,x\in \R^d, \;\,n\in \N,
\end{equation}
where $\eta \in C^\infty(\R)$ and $\chi_{(-\infty,1]}\leq \eta \leq \chi_{(-\infty,2]}$.

Fix $x \in \Rd$, $s,t\in I$, with $s\leq t$, and a nonnegative function $f \in C^1_b(\Rd)$ with $\|f \|_{\infty}\leq 1$,
and consider the function
\begin{align*}
F_n(r)&= \{G(t,r)[\theta_n (G(r,s)f)^2\log(G(r,s)f)^2]\}(x),
\qquad\;\, s\leq r \leq t,
\end{align*}
which is well defined by Lemma \ref{lemm-prel}.
For any $s\leq r \leq t$, $F_n(r)$ converges to $F(r)=\{G(t,r)[(G(r,s)f)^2\log(G(r,s)f)^2]\}(x)$ as $n \to +\infty$,
by the monotone convergence theorem (see \eqref{rep_nucleo}). Moreover, since the function
$\theta_n (G(r,s)f)^2\log(G(r,s)f)^2$ belongs to $C^2_b(\Rd)$ for every $r>s$ and it vanishes outside  $B(0,2n)$, by
Proposition \ref{deriv} and the formula
\begin{eqnarray*}
\mathcal{A}(r)(g^2\log g^2)= 2 g(1+\log g^2)\mathcal{A}(r)g+ 2(3+\log g^2)\langle Q(r)\nabla g, \nabla g\rangle,
\end{eqnarray*}
which holds for every positive function $g \in C^2(\Rd)$ and every $r\in I$, we get
\begin{align*}
F'_n(r)=& -\Big\{G(t,r)\Big[ 2\theta_n(3+\log (G(r,s)f)^2)\langle Q(r)\nabla_x G(r,s)f, \nabla_x G(r,s)f\rangle\nnm\\
&\quad \,\,\quad  \,\,\quad  \,\,\quad + (G(r,s)f)^2\log(G(r,s)f)^2 \mathcal{A}(r)\theta_n\nnm\\
&\quad  \,\,\quad  \,\,\quad  \,\,\quad + 4(G(r,s)f)(\log(G(r,s)f)^2+1) \langle Q(r)\nabla \theta_n, \nabla_x G(r,s)f\rangle\Big]
\Big\}(x)\\
=&\! : I_{1,n}(r)+I_{2,n}(r)+I_{3,n}(r),
\end{align*}
for any $r\in [s,t]$.
Using the dominated convergence theorem, we have
\begin{eqnarray*}
\lim_{n\to +\infty}I_{1,n}(r)=-2\{G(t,r)[(3+\log (G(r,s)f)^2)\langle Q(r)\nabla_x G(r,s)f, \nabla_x G(r,s)f\rangle ]\}(x).
\end{eqnarray*}
Similarly, since $\nabla\theta_n$ vanishes uniformly in $\Rd$, as $n\to +\infty$, we easily conclude that
$I_{3,n}(r)$ tends to $0$ as $n\to +\infty$.
Now, let us consider the term $I_{2,n}$; by \eqref{inequality-b}, we can estimate
\begin{eqnarray}\label{Artheta}
({\mathcal A}(r)\theta_n)(x)\ge \frac{1}{n^2}\left [C_{s,t}\eta'\left (\frac{|x|}{n}\right )-C\right ],
\end{eqnarray}
for any $n\in\N$, $x\in \Rd$ and any $r\in [s,t]$, where $C=d\Lambda (2\|\eta'\|_{\infty}+\|\eta''\|_{\infty})$. Therefore, recalling that $(G(r,s)f)^2\log(G(r,s)f)^2\le 0$, we conclude that
\begin{eqnarray*}
\liminf_{n\to +\infty}I_{2,n}(r)\ge 0.
\end{eqnarray*}
Summing up, we have proved that
\begin{align*}
\liminf_{n\to +\infty}F'_n(r)\ge &-2\{G(t,r)[(3+\log (G(r,s)f)^2)\langle Q(r)\nabla_x G(r,s)f, \nabla_x G(r,s)f\rangle ]\}(x)\nnm\\
\ge & -6\Lambda\{G(t,r)(|\nabla_x G(r,s)f|^2)\}(x),
\end{align*}
for any $r\in [s,t]$ and any $x\in\Rd$.
Since the sequence $F_n'$ is bounded from below by a constant, from the Fatou lemma we can conclude that
\begin{align*}
F(t)-F(s)&=\lim_{n\to +\infty}(F_n(t)-F_n(s))\notag\\
&\ge \int_s^t\liminf_{n\to +\infty}F_n'(\sigma)d\sigma\notag\\
&\ge -6\Lambda\int_s^t\{G(t,r)(|\nabla_x G(r,s)f|^2)\}(x)dr.
\end{align*}
Using the gradient estimate \eqref{grad_est_punt} we get
\begin{align*}
F(t)-F(s)\ge& -6\Lambda (G(t,s)|\nabla f|^2)(x)\int_s^te^{2r_0(r-s)}dr\nnm\\
=&\frac{3\Lambda}{|r_0|}(e^{2r_0(t-s)}-1)(G(t,s)|\nabla f|^2)(x),
\end{align*}
and \eqref{log_sob_G} follows.
\end{proof}

Next proposition shows that the boundedness of $G(t,s)$ from $L^p(\Rd,\mu_s)$ into $L^q(\Rd,\mu_t)$, for any $t>s$, yields a family of logarithmic
Sobolev inequalities satisfied by the system of invariant measures $\{\mu_t: \,t\in I\}$.
The key tools used in the proof are estimate \eqref{log_sob_G}
and the Riesz-Thorin's interpolation theorem.

\begin{prop}\label{super}
Assume that, for every $s\in I$, $t>s$ and $1<p<q<+\infty$, $\tilde C_{p,q}(t,s):=\|G(t,s)\|_{p\to q}<+\infty$. Then,
\begin{align}\label{super_LSI}
\int_{\Rd}f^2 \log \left(\frac{|f|}{\|f\|_{2,\mu_s}} \right)d\mu_s(x) \leq &\frac{2\Lambda p(q-1)}{|r_0|(q-p)}(1-e^{2r_0(t-s)})\|\,|\nabla f|\,\|_{2,\mu_s}^2\nnm\\
&+ \frac{pq}{2(q-p)}\log(\tilde C_{p,q}(t,s))\|f\|_{2,\mu_s}^2,
\end{align}
for every $s\in I,\, t>s,\, f\in H^1(\Rd,\mu_s)$, where $r_0$ is the constant in $\eqref{b}$.
\end{prop}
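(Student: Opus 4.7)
The plan is to combine the integrated kernel logarithmic Sobolev inequality \eqref{log_sob_G} with the entropy--H\"older (Jensen) inequality and the Riesz--Thorin interpolation theorem. By the density of $C^1_b(\R^d)$ in $H^1(\R^d,\mu_s)$ and by the homogeneity of both sides of \eqref{super_LSI} under the scaling $f\mapsto\lambda f$, I reduce to establishing the estimate for nonnegative $f\in C^1_b(\R^d)$ with $\|f\|_{2,\mu_s}=1$.

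First I would apply \eqref{log_sob_G} with $p=2$ pointwise to $f$ and integrate the resulting inequality against $\mu_t$. The invariance relation $\int_{\R^d}G(t,s)\phi\,d\mu_t=\int_{\R^d}\phi\,d\mu_s$ coming from \eqref{invariance_intro} (applied to $\phi=f^2\log f^2$ and $\phi=|\nabla f|^2$, with a monotone-approximation step if needed to handle the possible sign change of $f^2\log f^2$) yields
\[
\int_{\R^d} f^2\log f^2\,d\mu_s\le \frac{4\Lambda}{|r_0|}\bigl(1-e^{2r_0(t-s)}\bigr)\||\nabla f|\|_{2,\mu_s}^2+\mathrm{Ent}_{\mu_t}(h),
\]
where $h:=G(t,s)f^2$ has $\|h\|_{1,\mu_t}=1$ and the residual term coincides with the entropy $\mathrm{Ent}_{\mu_t}(h)=\int h\log h\,d\mu_t$.

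Next, to dispose of this entropy via the supercontractivity hypothesis, I use the classical bound $\mathrm{Ent}_{\mu_t}(h)\le \tfrac{\alpha}{\alpha-1}\log\|h\|_{\alpha,\mu_t}$, which is Jensen's inequality applied to the concave function $\log$ on the probability measure $h\,d\mu_t$. The natural choice $\alpha=q/p>1$ produces the prefactor $\tfrac{q}{q-p}$, matching the target denominator. To bound $\|h\|_{q/p,\mu_t}$, I invoke Riesz--Thorin between the $L^1(\mu_s)\to L^1(\mu_t)$ contraction (a consequence of Lemma \ref{lemm-prel} together with \eqref{invariance_intro}) and the hypothesis $\|G(t,s)\|_{p\to q}\le \tilde C_{p,q}(t,s)$. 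The interpolation parameter $\theta=(q-p)/(q-1)$ makes the target exponent equal $q/p$, and Riesz--Thorin gives $\|G(t,s)F\|_{q/p,\mu_t}\le \tilde C_{p,q}(t,s)^{\theta}\|F\|_{P_\theta,\mu_s}$ with $1/P_\theta=1-\theta+\theta/p$.

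Applying this with $F=f^2$ gives $\|h\|_{q/p,\mu_t}\le \tilde C_{p,q}(t,s)^{(q-p)/(q-1)}\|f\|_{2P_\theta,\mu_s}^2$. The exponent $2P_\theta=2p(q-1)/(p^2-2p+q)$ exceeds $2$ in general, so the excess norm $\|f\|_{2P_\theta,\mu_s}$ has to be handled; here one uses the log-convexity of $r\mapsto \log\|f\|_{r,\mu_s}$ together with another application of \eqref{log_sob_G} (applied to an appropriate power of $|f|$) to absorb the excess into the gradient term, raising the coefficient of $\||\nabla f|\|_{2,\mu_s}^2$ from $\frac{4\Lambda}{|r_0|}$ to $\frac{2\Lambda p(q-1)}{|r_0|(q-p)}\cdot 2$, which after the factor $\tfrac12$ coming from $\log|f|=\tfrac12\log f^2$ gives the announced coefficient $\frac{2\Lambda p(q-1)}{|r_0|(q-p)}(1-e^{2r_0(t-s)})$. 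Collecting the constants matches $\frac{pq}{2(q-p)}\log\tilde C_{p,q}(t,s)\|f\|_{2,\mu_s}^2$ in front of $\|f\|_{2,\mu_s}^2$, and a density argument extends the inequality to all of $H^1(\R^d,\mu_s)$. I expect the precise coefficient bookkeeping in the absorption step to be the main technical obstacle, since no standard Sobolev embedding is available on $\R^d$ with the measure $\mu_s$ and the excess $L^{2P_\theta}$-norm must be eaten by the structural identity $p(q-1)-(q-p)=p(p-1)$ that governs the gain in the gradient coefficient.
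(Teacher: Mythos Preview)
Your proof starts along the same lines as the paper's (integrate the kernel LSI \eqref{log_sob_G} against $\mu_t$ to reduce to controlling the residual entropy $\mathrm{Ent}_{\mu_t}(G(t,s)f^2)$), but the step you call ``absorption'' is where the argument breaks down.

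Concretely: after the Jensen bound $\mathrm{Ent}_{\mu_t}(h)\le\frac{q}{q-p}\log\|h\|_{q/p,\mu_t}$ and Riesz--Thorin with $\theta=(q-p)/(q-1)$, you are left with a term $\frac{q}{q-p}\log\|f^2\|_{P_\theta,\mu_s}$ where, as you correctly note, $2P_\theta>2$. You then claim this can be absorbed into the gradient term via ``log-convexity of norms together with another application of \eqref{log_sob_G}''. This does not work. Log-convexity only gives \emph{lower} bounds of the form $\log\|f^2\|_{P_\theta,\mu_s}\ge\frac{P_\theta-1}{P_\theta}\,\mathrm{Ent}_{\mu_s}(f^2)$, never the upper bound you need. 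And a further use of \eqref{log_sob_G} applied to a power of $|f|$, once integrated, only relates the entropy of that power to a gradient term \emph{plus another $G(t,s)$-entropy}; it never yields an estimate $\|f\|_{2P_\theta,\mu_s}\lesssim \|f\|_{2,\mu_s}+\||\nabla f|\|_{2,\mu_s}$ without already possessing an LSI for $\mu_s$---which is exactly what you are trying to prove. So the argument is circular at this point.

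The paper's device (following R\"ockner--Wang) sidesteps this difficulty by never fixing the interpolation exponent. It applies Riesz--Thorin to the function $|f|^{2(1-h)}$ with the interpolation parameter $r_h=ph/(p-1)$, so that the \emph{input} exponent is $p_h=1/(1-h)$ and hence $\||f|^{2(1-h)}\|_{p_h,\mu_s}=\|f\|_{2,\mu_s}^{2(1-h)}=1$ identically in $h$. Differentiating the resulting inequality at $h=0$ produces
\[
\int_{\Rd} (G(t,s)f^2)\log(G(t,s)f^2)\,d\mu_t
\le \frac{q(p-1)}{p(q-1)}\int_{\Rd} f^2\log f^2\,d\mu_s + \frac{q}{q-1}\log\tilde C_{p,q}(t,s),
\]
with coefficient $\frac{q(p-1)}{p(q-1)}<1$ in front of $\mathrm{Ent}_{\mu_s}(f^2)$. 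Substituting this into the integrated kernel LSI and moving that fraction of $\mathrm{Ent}_{\mu_s}(f^2)$ to the left is what manufactures the factor $\frac{p(q-1)}{q-p}$ in front of the gradient term and the $\frac{pq}{2(q-p)}$ in front of $\log\tilde C_{p,q}$; no higher $L^r$-norm of $f$ ever appears. The ``structural identity'' you invoke is precisely the relation $1-\frac{q(p-1)}{p(q-1)}=\frac{q-p}{p(q-1)}$ that makes this absorption work, but it operates at the level of entropies, not of $L^{2P_\theta}$-norms.
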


\begin{proof}
The proof can be obtained adapting the arguments in the proof of \cite[Thm. 2.1(1)]{RocWan03Sup}. For the reader's convenience we enter into details.

We split the proof into two steps. In the first one we show that it suffices to prove \eqref{super_LSI} for functions $f\in C^1_c(\Rd)$ such
that $\|f\|_{2,\mu_s}=1$. In the second
step, we get estimate \eqref{super_LSI} for such functions.

{\em Step 1.}
For notational convenience, we set
\begin{eqnarray*}
M_1(t,s)=\frac{2\Lambda p(q-1)}{|r_0|(q-p)}(1-e^{2r_0(t-s)}),\qquad\;\,
M_2(t,s)=\frac{pq}{2(q-p)}\log(\tilde C_{p,q}(t,s)).
\end{eqnarray*}
We assume that inequality \eqref{super_LSI} holds for any function $f \in C^1_c(\Rd)$ such that $\|f\|_{2,\mu_s}=1$, and we show that it actually
holds for any $f\in H^1(\Rd,\mu_s)$. For this purpose, let $f\in H^{1}(\Rd,\mu_s)$ satisfy $\|f\|_{2,\mu_s}=1$, and consider a sequence $(f_n)_n\in C^1_c(\Rd)$ such that
$\|f_n-f\|_{H^1(\Rd,\mu_s)}$ tends to $0$ as $n \to +\infty$ (see \cite[Lemma 2.5]{AngLorLun}).
Without loss of generality, we can assume that $\|f_n\|_{2,\mu_s}=1$ for any $n\in\N$.
Up to a subsequence, $f_n(x)$ converges to $f(x)$ for almost every $x\in\Rd$ as $n \to +\infty$ and
\begin{align*}
\int_{\Rd}f_n^2 \log |f_n|d\mu_s(x) \leq M_1(t,s)\int_{\Rd}|\nabla f_n|^2d\mu_s(x) + M_2(t,s),
\end{align*}
for every $n\in \N$.
Let us split $f_n^2\log |f_n|= f_n^2\log_{+} |f_n|- f_n^2|\log |f_n|\,|\chi_{\{|f_n|\le 1\}}$, where $\log_+(r)=\max\{\log(r),0\}$ for any $r>0$.
Since $f_n^2|\log|f_n|\,|\le(2e)^{-1}=\sup_{x\in (0,1]}x^2|\log x|$ for any $n\in\N$, the dominated convergence theorem yields
\begin{eqnarray*}
\lim_{n \to +\infty}\int_{\Rd} f_n^2|\log |f_n|\,|\chi_{\{|f_n|\le 1\}}\,d\mu_s(x) =\int_{\Rd} f^2|\log |f|\,|\chi_{\{|f|\le 1\}}\,d\mu_s(x) .
\end{eqnarray*}
Thus, by Fatou lemma we deduce that
\begin{align*}
&\int_{\Rd}f^2\log_+|f|\,d\mu_s(x) \\
\le &\liminf_{n\to +\infty}\left (M_1(t,s)\int_{\Rd}|\nabla f_n|^2\,d\mu_s(x) +M_2(t,s)+
\int_{\Rd}f_n^2|\log |f_n|\,|\chi_{\{|f_n|\le 1\}}\,d\mu_s(x) \right )\\
=&M_1(t,s)\int_{\Rd}|\nabla f|^2\,d\mu_s(x) +M_2(t,s)+
\int_{\Rd}f^2|\log |f|\,|\chi_{\{|f|\le 1\}}\,d\mu_s(x) ,
\end{align*}
which leads immediately to \eqref{super_LSI}.

Finally, the condition $\|f\|_{2,\mu_s}= 1$ can be removed applying
\eqref{super_LSI} to the function $f(\|f\|_{2,\mu_s})^{-1}$.

{\em Step 2.}
Let us prove the claim for $f \in C^1_c(\Rd)$ such that $\|f\|_{2,\mu_s}=1$.
The starting point is formula \eqref{log_sob_G} with $p=2$ which yields
\begin{equation}\label{log_sob_g_2}
G(t,s)(f^2\log f^2)\leq  \frac{4 \Lambda}{|r_0|} (1-e^{2r_0(t-s)})G(t,s)|\nabla f|^2+(G(t,s)f^2) \log (G(t,s)f^2),
\end{equation}
for any $s,t\in I$, with $s<t$ and any $f\in C^1_c(\Rd)$. Integrating \eqref{log_sob_g_2} in $\Rd$ with respect to the measure $\mu_t$
and using \eqref{invariance_intro}, we get
\begin{align}\label{log_sob_g_2_integrate}
\int_{\Rd}f^2\log f^2d\mu_s(x) \leq  &\frac{4\Lambda}{|r_0|} (1-e^{2r_0(t-s)})\int_{\Rd}|\nabla f|^2d\mu_s(x) \nnm\\
&+\int_{\Rd}(G(t,s)f^2) \log (G(t,s)f^2)d\mu_t(x) .
\end{align}

Let us fix $1<p<q<+\infty$. By assumptions, $\|G(t,s)\|_{p \to q}=\tilde C_{p,q}(t,s)<+\infty$,
for every $t,s \in I$ such that $t>s$.
Since $\|G(t,s)\|_{1\to 1}\leq 1$,
from the Riesz-Thorin's interpolation theorem we get that
\begin{equation}
\label{ticito}
\|G(t,s)f\|_{q_h,\mu_t}\leq (\tilde C_{p,q}(t,s))^{r_h}\|f\|_{p_h,\mu_s},
\end{equation}
for every $f \in L^p(\Rd,\mu_s)$ and $h\in (0,1-1/p)$, where
\begin{eqnarray*}
r_h=\frac{ph}{p-1}\in (0,1),\qquad\;\, \frac{1}{p_h}=1-r_h+\frac{r_h}{p}, \qquad\;\,\frac{1}{q_h}= 1-r_h+\frac{r_h}{q}.
\end{eqnarray*}
Fix $f\in C_b(\Rd)$ such that $\|f\|_{2,\mu_s}=1$. Then, from \eqref{ticito} and, since $p_h=(1-h)^{-1}$, we have
\begin{eqnarray*}
\int_{\Rd} (G(t,s)|f|^{2(1-h)})^{q_h}d\mu_t(x)\leq (\tilde C_{p,q}(t,s))^{r_h q_h},\qquad\;\, t>s,
\end{eqnarray*}
which holds also for $h=0$. Consequently,
\begin{align}
&\frac{1}{h}\left (\int_{\Rd} (G(t,s)|f|^{2(1-h)})^{q_h}d\mu_t(x)-\int_{\Rd}G(t,s)|f|^2d\mu_t(x)\right )\notag\\
=&\frac{1}{h}\left (\int_{\Rd} (G(t,s)|f|^{2(1-h)})^{q_h}d\mu_t(x)-1\right )\notag\\
\le &\frac{1}{h}\left (\tilde C_{p,q}(t,s))^{r_h q_h}-1\right ).
\label{increm-ratio}
\end{align}
The first and the last sides of \eqref{increm-ratio} represent respectively the incremental ratio at $h=0$ of the functions
$h\mapsto \|G(t,s)|f|^{2(1-h)}\|_{q_h,\mu_s}^{q_h}$ and $h\mapsto (\tilde C_{p,q}(t,s))^{r_hq_h}$. Since these two functions are differentiable at $h=0$, we
immediately deduce that
\begin{align*}
&\frac{p(q-1)}{q(p-1)}\int_{\Rd} G(t,s)f^2\log(G(t,s)f^2) d\mu_t(x)-\int_{\Rd} G(t,s)(f^2\log f^2) d\mu_t(x)\nnm\\
\leq &\frac{p}{p-1}\log (\tilde C_{p,q}(t,s)),
\end{align*}
or, equivalently, since $\{\mu_t: t\in I\}$ is an evolution system of measure,
\begin{align*}
\int_{\Rd} G(t,s)f^2\log(G(t,s)f^2)d\mu_t(x)\leq &\frac{q(p-1)}{p(q-1)}\int_{\Rd}f^2\log f^2d\mu_s(x)\nnm\\
 &+ \frac{q}{q-1}\log (\tilde C_{p,q}(t,s)),
\end{align*}
which, replaced into \eqref{log_sob_g_2_integrate}, yields
\begin{align*}
\int_{\Rd}f^2\log |f|d\mu_s(x) \leq  &\frac{2\Lambda}{|r_0|}\frac{p(q-1)}{q-p}(1-e^{2r_0(t-s)})\int_{\Rd}|\nabla f|^2d\mu_s(x) \nnm\\
&+\frac{pq}{2(q-p)}\log(\tilde C_{p,q}(t,s)),
\end{align*}
and the claim is proved.
\end{proof}

\begin{proof}[Proof of Theorem $\ref{super_caract}$]
``$(i)\Rightarrow (ii)$''
By Proposition \ref{super}, if $G(t,s)$ is supercontractive, then
the following family of logarithmic Sobolev inequalities
\begin{equation}\label{log_r}
\int_{\Rd}f^2 \log f^2 d\mu_s(x) \leq r(t-s) \int_{\Rd}|\nabla f|^2d\mu_s(x) + \widetilde{\beta}(t-s),
\end{equation}
holds for every $s\in I,\,t>s$, and $f\in C^1_b(\Rd)$ with $\|f\|_{2,\mu_s}=1$. Since
$\tilde C_{p,q}(t,s)\le C_{p,q}(t-s)$, in formula \eqref{log_r} we have
\begin{eqnarray*}
r(t-s)=\frac{4\Lambda p(q-1)}{|r_0|(q-p)}(1-e^{2r_0(t-s)}),\qquad\;\,
\widetilde{\beta}(t-s)=\frac{pq}{q-p}\log(C_{p,q}(t-s)).
\end{eqnarray*}
$\tilde\beta$ is a positive function defined in $(0,\infty)$ and $2\leq p\leq q$.

Inverting the function $r$  we obtain
\begin{eqnarray*}
t-s=\frac{1}{2r_0}\log\left (1+\frac{r_0(q-p)}{4\Lambda(q-1)}r\right ),\qquad\;\,r\in [0,\overline{r}),
\end{eqnarray*}
where $\overline{r}=\frac{4\Lambda p(q-1)}{|r_0|(q-p)}$.
Thus \eqref{fam_log_sob_prop} holds for every $\varepsilon\in (0, \overline{r})$ with
\begin{eqnarray*}
\beta(\varepsilon)=\frac{pq}{q-p}\log\left[ C_{p,q}\left (\frac{1}{2r_0}\log\left (1+\frac{r_0(q-p)}{4\Lambda(q-1)}\varepsilon\right )\right )\right ].
\end{eqnarray*}
Clearly we can extend \eqref{fam_log_sob_prop} to any $\varepsilon>0$ and any $f\in H^1(\Rd,\mu_s)$ by setting $\beta(\varepsilon)=\lim_{r\to \overline{r}^-}\beta(r)$ for any $\varepsilon\geq \overline{r}$, and
using a standard approximation argument.
\smallskip

``$(ii)\Rightarrow (i)$'' Assume that estimate \eqref{fam_log_sob_prop} holds for every $f\in H^1(\Rd,\mu_s)$,\, $s\in I$,\, $\varepsilon >0$ and
some positive decreasing function $\beta:(0,+\infty)\to(0,+\infty)$. Then, for every $f\in C^1_c(\Rd)$, $p\in(1,+\infty)$ and $s\in I$, writing \eqref{fam_log_sob_prop} for the function $|f|^{p/2}$, we have
\begin{align}\label{fam_p}
\int_{\Rd} |f|^p\log \left(\frac{|f|}{\|f\|_{p,\mu_s}}\right)\,d\mu_s(x)
\le &\varepsilon \frac{p}{2} \int_{\Rd}|f|^{p-2}|\nabla f|^2d\mu_s(x) +\frac{2\beta(\varepsilon)}{p}\|f\|_{p,\mu_s}^p.
\end{align}
Using \eqref{fam_p}  we deduce supercontractivity of $G(t,s)$.
Indeed, let $\varepsilon>0$, $f\in C^1_c(\Rd)$ be nonnegative and non identically vanishing in $\Rd$, $p\in(1,+\infty)$ and $s\in I$; we set
\begin{equation}\label{q_m}
q(t):=e^{2\eta_0\varepsilon^{-1}(t-s)}(p-1)+1,\qquad\;\, m(t):=2\beta(\varepsilon)\left(p^{-1}-(q(t))^{-1}\right),
\end{equation}
\begin{eqnarray*}
H(t):=e^{-m(t)}\left(\int_{\Rd}(G(t,s)f)^{q(t)}\,d\mu_t(x) \right)^{1/q(t)},
\end{eqnarray*}
for any $t\ge s$.
To prove that $G(t,s)$ is supercontractive, we show that $H$ is a non increasing function. We would like to differentiate the function $H$ and show that its derivative is nonpositive in $(s,+\infty)$. Unfortunately,
we can differentiate functions of the type
$t\mapsto \int_{\Rd}\psi d\mu_t$ only when $\psi$ is constant outside a compact set, which, in general, is not our case.
For this purpose we use an approximation argument and introduce the functions
$H_n$ ($n\in\N$) defined by
\begin{eqnarray*}
H_n(t):=e^{-m(t)}\left(\int_{\Rd}\theta_n (G(t,s)f)^{q(t)}\,d\mu_t(x) \right)^{1/q(t)},
\end{eqnarray*}
where $\theta_n$ is defined in \eqref{thetan}.
From Proposition \ref{lemma-3.1}, for every $n \in \N$, the function $H_n$ is differentiable for $t>s$ with derivative given by
\begin{eqnarray*}
H'_n(t)=H_n(t)(-m'(t)+\varphi_n(t)),\quad\,\,t>s,
\end{eqnarray*}
where
\begin{align*}
\varphi_n&(t)  = \left(\int_{\Rd}\theta_n (G(t,s)f)^{q(t)}d\mu_t(x) \right)^{-1}\\
&\times\bigg\{\frac{q'(t)}{q(t)}\int_{\Rd}\theta_n(G(t,s)f)^{q(t)}\log (G(t,s)f)d\mu_t(x) \\
&\qquad-\frac{q'(t)}{(q(t))^2}\left(\int_{\Rd}\theta_n (G(t,s)f)^{q(t)}d\mu_t(x) \right)\log\left (\int_{\Rd}\theta_n (G(t,s)f)^{q(t)}d\mu_t(x) \right)\\
&\qquad- (q(t)-1)\int_{\Rd}\theta_n(G(t,s)f)^{q(t)-2}\langle Q(t)\nabla_x G(t,s)f,\nabla_x G(t,s)f \rangle d\mu_t(x) \\
&\qquad-\frac{2}{q(t)}\int_{\Rd}\langle Q(t)\nabla_x((G(t,s)f)^{q(t)}),\nabla\theta_n\rangle d\mu_t(x) \\
&\qquad-\frac{1}{q(t)}\int_{\Rd}(G(t,s)f)^{q(t)}{\mathcal A}(t)\theta_n d\mu_t(x)  \bigg\} .
\end{align*}
Using \eqref{Artheta} we can show that $\limsup_{n\to +\infty}\varphi_n(t)\le \psi(t)$ for every $t>s$, where
\begin{align*}
\psi(t)
:=&\left(\int_{\Rd}(G(t,s)f)^{q(t)}d\mu_t(x) \right)^{-1}\bigg\{\frac{q'(t)}{q(t)}\int_{\Rd}(G(t,s)f)^{q(t)}\log (G(t,s)f)d\mu_t(x) \\
&-\frac{q'(t)}{(q(t))^2}\left(\int_{\Rd} (G(t,s)f)^{q(t)}d\mu_t(x) \right)\log\left (\int_{\Rd} (G(t,s)f)^{q(t)}d\mu_t(x) \right)\\
&- (q(t)-1)\int_{\Rd}(G(t,s)f)^{q(t)-2}\langle Q(t)\nabla_x G(t,s)f,\nabla_x G(t,s)f \rangle d\mu_t(x) \bigg\}.
\end{align*}
Writing
\begin{eqnarray*}
H_n(t)-H_n(s)=\int_s^tH_n(\sigma)(-m'(\sigma)+\varphi_n(\sigma))d\sigma
\end{eqnarray*}
and letting $n\to +\infty$ yields
\begin{equation}\label{H}
H(t)-H(s)\leq \int_s^t H(\sigma)(-m'(\sigma)+\psi(\sigma))\,d\sigma.
\end{equation}
From \eqref{ell} we get
\begin{align*}
-m'(\sigma)+\psi(\sigma)&\leq[e^{m(\sigma)}H(\sigma)]^{-q(\sigma)}\frac{q'(\sigma)}{q(\sigma)}\bigg\{
-m'(\sigma)\frac{q(\sigma)}{q'(\sigma)}\int_{\Rd}(G(\sigma,s)f)^{q(\sigma)}d\mu_\sigma(x)\\
&+\int_{\Rd}(G(\sigma,s)f)^{q(\sigma)}\log (G(\sigma,s)f)d\mu_\sigma(x) \\
&-\left(\int_{\Rd} (G(\sigma,s)f)^{q(\sigma)}d\mu_\sigma(x) \right)\log\left (\int_{\Rd} (G(\sigma,s)f)^{q(\sigma)}d\mu_\sigma(x) \right)^{\frac{1}{q(\sigma)}}\\
&-\eta_0 \frac{q(\sigma)(q(\sigma)-1)}{q'(\sigma)}\int_{\Rd}(G(\sigma,s)f)^{q(\sigma)-2}|\nabla_x G(\sigma,s)f|^2 d\mu_\sigma(x)\bigg\}.
\end{align*}
Now, applying the logarithmic Sobolev inequality \eqref{fam_p} with $G(\sigma,s)f$ and
$q(\sigma)$ in place of $f$ and $p$ respectively, we get
\begin{align*}
-m'(\sigma)+\psi(\sigma)&\leq[e^{m(\sigma)}H(\sigma)]^{-q(\sigma)}\frac{q'(\sigma)}{q(\sigma)}\bigg\{\left(\varepsilon \frac{q(\sigma)}{2}-\eta_0 \frac{q(\sigma)(q(\sigma)-1)}{q'(\sigma)}\right)\\
& \qquad\quad\times\int_{\Rd}(G(\sigma,s)f)^{q(\sigma)-2}|\nabla_x G(\sigma,s)f|^2d\mu_\sigma(x) \\
& \quad+\left(2\frac{\beta(\varepsilon)}{q(\sigma)}-m'(\sigma)\frac{q(\sigma)}{q'(\sigma)}\right)\int_{\Rd}(G(\sigma,s)f)^{q(\sigma)}d\mu_\sigma(x)
\bigg\}\\
& =0,
\end{align*}
by the definition of $q$ and $m$ given in \eqref{q_m}. Therefore, from \eqref{H} we deduce that $H(t)\leq H(s)$, so that $H$ is nonincreasing, i.e,
\begin{equation}
\|G(t,s)f\|_{q(t),\mu_t}\leq e^{2\beta(\varepsilon)\left(\frac{1}{p}-\frac{1}{q(t)}\right)}\|f\|_{p,\mu_s}.
\label{super-contr}
\end{equation}
Now, for any $q>p$ and $t>s$, we fix $\varepsilon=2\eta_0(t-s)(\log((q-1)/(p-1)))^{-1}$. We thus deduce that $q(t)=q$ and, from \eqref{super-contr}
we obtain
\begin{eqnarray*}
\|G(t,s)f\|_{q,\mu_t}\leq C_{p,q}(t-s)\|f\|_{p,\mu_s},
\end{eqnarray*}
with
\begin{eqnarray*}
C_{p,q}(r)=\exp\left [\frac{2(q-p)}{pq}\beta\left (2\eta_0r\left (\log\left (\frac{q-1}{p-1}\right )\right )^{-1}\right )\right ],\qquad\;\,r>0,
\end{eqnarray*}
which is a decreasing function since $\beta$ is decreasing as well.

The density of $C^1_c(\Rd)$ in $L^p(\Rd,\mu_s)$ allows us to complete the proof.
\end{proof}

\subsection{A second criterion}
Here we show that the integrability  with respect to the measures $\{\mu_t:t\in I\}$ (uniform in $t$) of the Gaussian functions
$\varphi_{\lambda}(x):=e^{\lambda |x|^2}$ for every $\lambda>0$ is another condition equivalent to the supercontractivity of $G(t,s)$.
To this aim we first prove some preliminary results.
The first proposition, whose proof is an adaption of Ledoux's method \cite{Led95Rem} to our setting, yields some exponential integrability result. A more general result than next Proposition \ref{integrability}
has been proved
in \cite{GroRot98Her}, in the autonomous setting still assuming the validity of the \eqref{fam_log_sob_prop}, where
the evolution system of measures is replaced by a unique invariant measure.

\begin{prop}\label{integrability}
The function
$x\mapsto e^{\lambda |x|}$ belongs to $L^1(\Rd,\mu_s)$ for every $\lambda>0$.
More precisely,
\begin{eqnarray*}
\sup_{s\in I}\int_{\Rd}e^{\lambda |x|}d\mu_s(x) <+\infty,\qquad\;\,\lambda>0.
\end{eqnarray*}
Moreover, if the inequality \eqref{fam_log_sob_prop} holds, then
$\varphi_{\lambda}\in L^1(\Rd,\mu_s)$ for every $\lambda>0$ and
\begin{eqnarray*}
\sup_{s\in I}\int_{\Rd}\varphi_{\lambda}(x)d\mu_s(x) <+\infty,\qquad\;\,\lambda>0.
\end{eqnarray*}
\end{prop}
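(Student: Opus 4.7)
The plan is to follow Ledoux's Herbst--type argument \cite{Led95Rem}: for an auxiliary function $F$ (a smooth bounded approximation of $|x|$ for the first claim, of $|x|^2$ for the second), apply the available logarithmic Sobolev inequality to the test function $f = e^{\lambda F/2}$, turning it into a first-order differential inequality for the moment generating function $h_s(\lambda) := \int_{\R^d} e^{\lambda F}\,d\mu_s$, and then integrate it explicitly.

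For the first assertion, the dissipativity condition \eqref{b} gives $(\mathrm{LSI})$ for $\mu_s$ with a constant $C$ independent of $s$ (see \cite{AngLorLun}). I would pick smooth bounded $1$-Lipschitz approximations $F_n\nearrow|\cdot|$ and apply $(\mathrm{LSI})$ to $e^{\lambda F_n/2}$; using $|\nabla F_n|\le 1$ one obtains Herbst's inequality $\lambda h_{s,n}'(\lambda) - h_{s,n}(\lambda)\log h_{s,n}(\lambda) \le (C\lambda^2/2)\,h_{s,n}(\lambda)$, equivalent to $(\lambda^{-1}\log h_{s,n})'(\lambda)\le C/2$. Since $h_{s,n}(0)=1$, L'H\^opital identifies the initial value $\lim_{\lambda\to 0^+}\lambda^{-1}\log h_{s,n}(\lambda) = \int F_n\,d\mu_s$; the uniform bound $\sup_{s\in I}\int|x|\,d\mu_s<+\infty$ is a consequence of the tightness of $\{\mu_t\}$ (which bounds the median of $|x|$ uniformly in $s$) combined with the Gaussian concentration around that median already produced by $(\mathrm{LSI})$. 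Integrating the ODE and letting $n\to+\infty$ via monotone convergence proves the first part.

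For the second assertion the same strategy applies with smooth $F_n\nearrow|\cdot|^2$ satisfying $|\nabla F_n|^2\le 4F_n$ (a concrete choice is $F_n(x) = |x|^2/(1+|x|/n)$), and with \eqref{fam_log_sob_prop} in place of $(\mathrm{LSI})$; one obtains $(\lambda-2\varepsilon\lambda^2)h_{s,n}'(\lambda) \le h_{s,n}(\lambda)\log h_{s,n}(\lambda) + 2\beta(\varepsilon)h_{s,n}(\lambda)$, and choosing $\varepsilon = 1/(4\lambda)$ reduces this, after dividing by $\lambda^2 h_{s,n}$, to
\[
\Bigl(\frac{\log h_{s,n}}{\lambda^2}\Bigr)'(\lambda) \le \frac{4\beta(1/(4\lambda))}{\lambda^3}.
\]
Integrating between a fixed $\lambda_0>0$ and $\lambda>\lambda_0$ and sending $n\to+\infty$ by monotone convergence would deliver the sought Gaussian exponential integrability uniformly in $s$. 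The main technical obstacle is producing an initial bound on $\log h_{s,n}(\lambda_0)$ uniform in $n$ and $s$: one cannot let $\lambda_0\to 0^+$, since $\log h_{s,n}(\lambda_0)/\lambda_0^2$ diverges. I would resolve this by following the iterative scheme of Gross--Rothaus \cite{GroRot98Her}, which applies \eqref{fam_log_sob_prop} successively with $\varepsilon$ coupled to the truncation parameter so as to bootstrap upward the range of $\lambda$ on which $h_{s,n}(\lambda)$ is known to be finite, starting from a sufficiently small value furnished by the first part of the proposition.
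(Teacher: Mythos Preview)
Your overall Herbst strategy is the right one and matches the paper in spirit, but the execution diverges from the paper's in two places, and in the second place there is a genuine gap.

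\medskip
\textbf{First assertion.} Your derivation of the differential inequality $(\lambda^{-1}\log h_{s,n})'\le C/2$ is the same as the paper's. The difference is how the constant of integration is handled. You integrate from $\lambda=0$ and need $\sup_s\int_{\Rd}|x|\,d\mu_s<+\infty$; your justification (``median bound from tightness plus Gaussian concentration around the median from (LSI)'') is a bit circular as stated, since Herbst gives concentration around the \emph{mean}, and passing from there to a uniform mean bound via the median requires an extra comparison step you do not spell out. It can be fixed, but the paper avoids the issue entirely with a self-improvement trick: it integrates the same inequality only from $r=1$ to $r=2$, obtaining $H_{n,\lambda}(2)\le e^{C\lambda^2}H_{n,\lambda}(1)^2$, and then closes by Cauchy--Schwarz plus tightness:
\[
H_{n,\lambda}(1)\le \big(\mu_s(\{\lambda f_n\ge M\})\big)^{1/2}H_{n,\lambda}(2)^{1/2}+e^M\le \tfrac12 H_{n,\lambda}(1)+e^M,
\]
where $M$ is chosen from tightness so that $\mu_s(\{\lambda f_n\ge M\})\le (4e^{C\lambda^2})^{-1}$ uniformly in $s$. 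This yields $H_{n,\lambda}(1)\le 2e^M$ with no moment input.

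\medskip
\textbf{Second assertion.} Here the paper takes a route you do not anticipate: it keeps the \emph{same} Lipschitz approximations $f_n\nearrow|\cdot|$ (not approximations of $|\cdot|^2$), applies \eqref{fam_log_sob_prop} to $e^{rf_n/2}$ to control $H_n(\sigma)=\int e^{\sigma f_n}d\mu_s$, and then uses the Gaussian integral identity
\[
2\sqrt{\lambda\pi}\int_{\Rd}e^{\lambda f_n^2}\,d\mu_s=\int_{\R}H_n(\sigma)e^{-\sigma^2/(4\lambda)}\,d\sigma
\]
to transfer the bound to $\int e^{\lambda f_n^2}d\mu_s$. The required input is precisely $\sup_s\|e^{\gamma|\cdot|}\|_{1,\mu_s}<+\infty$ for a fixed $\gamma>0$, which is exactly the first assertion.

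Your alternative --- approximate $|\cdot|^2$ directly by $F_n$ with $|\nabla F_n|^2\le 4F_n$ --- leads to a correct differential inequality, but the initial condition you need is a \emph{uniform-in-$n$} bound on $h_{s,n}(\lambda_0)=\int e^{\lambda_0 F_n}d\mu_s$. Since $F_n\nearrow|\cdot|^2$, this amounts to $\sup_s\int e^{\lambda_0|x|^2}d\mu_s<+\infty$ for some $\lambda_0>0$, i.e.\ Gaussian (not merely exponential) integrability. The first part of the proposition furnishes only $\sup_s\int e^{\lambda|x|}d\mu_s<+\infty$, which does not imply this; and your concrete $F_n$ satisfies $F_n\le n|x|$, so the bound coming from Part~1 depends on $n$. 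The missing ingredient is the standard fact that (LSI) alone already yields $\sup_s\int e^{\lambda_0|x|^2}d\mu_s<+\infty$ for $\lambda_0<(2C)^{-1}$ (cf.\ the paper's Remark after the proposition). With that extra step your scheme would go through; the paper's Gaussian-integral device bypasses it and ties Part~2 directly back to Part~1.
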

\begin{proof}
For every $n \in \N$, let $\psi_n:[0,+\infty)\to\R$ be a smooth increasing function such that $\psi_n(t)=t$ for any $t\in [0,n]$,
$\psi_n(t)=n+1$ for any $t\ge n+2$ and $0\le\psi_n'(t)\le 1$ for any $t\ge 0$.
The functions $f_n(x):=\psi_n(|x|)$ are bounded and satisfy $\|\,|\nabla f_n|\,\|_{\infty}\leq 1$ for any $n\in\N$. Moreover,
$f_n(x)$ converges increasingly to $f(x):=|x|$ for any $x\in\Rd$, as $n \to +\infty$.
Fix $s\in I$, $\lambda>0$ and $n\in\N$. We set $H_{n,\lambda}(r):=\displaystyle{\int_{\Rd}} e^{\lambda r f_n}d\mu_s(x) $ for any $r>0$, and observe that
\begin{equation}\label{equality}
H_{n,\lambda}'(r)= \lambda \int_{\Rd}e^{\lambda r f_n}f_nd\mu_s(x) .
\end{equation}
Applying the logarithmic Sobolev inequality \eqref{LSI} to the function $e^{\lambda r f_n/2}$ and using \eqref{equality}, we get
\begin{equation*}
r H_{n,\lambda}'(r)-H_{n,\lambda}(r)\log H_{n,\lambda}(r)\leq \frac{C\lambda^2r^2}{2}\int_{\Rd}e^{\lambda r f_n}|\nabla f_n|^2d\mu_s(x) \leq \frac{C\lambda^2 r^2}{2} H_{n,\lambda}(r),
\end{equation*}
for every $n \in \N$. Now, dividing by $r^2 H_{n,\lambda}(r)$ we have
\begin{equation}
\label{est_1_first}
\left(\frac{1}{r}\log H_{n,\lambda}(r)\right)'=\frac{1}{r}\frac{H_{n,\lambda}'(r)}{ H_{n,\lambda}(r)}-\frac{1}{r^2}\log H_{n,\lambda}(r)\leq \frac{C\lambda^2}{2}.
\end{equation}
Integrating \eqref{est_1_first} from $1$ to $2$ with respect to $r$ we deduce that
\begin{eqnarray*}
H_{n,\lambda}(2)\leq e^{C\lambda^2}(H_{n,\lambda}(1))^2.
\end{eqnarray*}
Since the evolution system of measures $\{\mu_t: t\in I\}$ is tight, we can choose $M>0$ such that
$\mu_s(\Rd\setminus B(0,M\lambda^{-1}))\leq (4e^{C\lambda^2})^{-1}$ for every $s\in I$.
This fact and the monotonicity of $\psi_n$ imply that
\begin{align*}
\mu_s(\{\lambda f_n\geq M\})
\le \mu_s(\{\lambda f\geq M\})
=\mu_s(\Rd\setminus B(0,M\lambda^{-1}))\le
(4e^{C\lambda^2})^{-1},
\end{align*}
for every $s\in I$.
Now,
\begin{align*}
\int_{\Rd} e^{\lambda f_n}d\mu_s(x) &=\int_{\{\lambda f_n \geq M\}} e^{\lambda f_n}d\mu_s(x) +\int_{\{\lambda f_n< M\}} e^{\lambda f_n}d\mu_s(x) \\
& \leq (\mu_s(\{\lambda f_n\geq M\}))^{\frac{1}{2}}\left(\int_{\Rd}e^{2\lambda f_n}d\mu_s(x) \right)^{\frac{1}{2}}+e^M\\
& \leq (4e^{C\lambda^2})^{-\frac{1}{2}} \big(H_{n,\lambda}(2)\big)^{\frac{1}{2}}+e^M\\
& \leq 2^{-1}H_{n,\lambda}(1)+e^M.
\end{align*}
Hence, $\int_{\Rd}e^{\lambda f_n}d\mu_s(x) \leq 2 e^M$ for any $s\in I$, and letting $n\to+\infty$ we get the first part of the claim.

In order to prove the second part of the claim assume that \eqref{fam_log_sob_prop} holds and, for brevity, we set $H_n:=H_{n,1}$.
Arguing as before and applying \eqref{fam_log_sob_prop} to $e^{r f_n/2}$, we get
\begin{equation}
\label{est_1}
\left(\frac{1}{r}\log H_n(r)\right)'=\frac{1}{r}\frac{H_n'(r)}{ H_n(r)}-\frac{1}{r^2}\log H_n(r)\leq \frac{\varepsilon}{2}+2\frac{\beta(\varepsilon)}{r^2},
\end{equation}
for every $\varepsilon>0$ and $n \in \N$. Integrating \eqref{est_1} from $\gamma$ to $\sigma$ we deduce that
\begin{eqnarray*}
\frac{1}{\sigma}\log H_n(\sigma)-\frac{1}{\gamma}\log H_n(\gamma)\leq\frac{\varepsilon}{2}(\sigma-\gamma)+2\beta(\varepsilon)\left(\frac{1}{\gamma}-\frac{1}{\sigma}\right).
\end{eqnarray*}
Therefore, for every $0<\gamma<\sigma$ and $\varepsilon>0$,
\begin{equation}\label{est_hdelta}
H_n(\sigma)\leq \exp\left (\frac{\varepsilon}{2}\sigma^2+\sigma\left(\frac{\log H_n(\gamma)}{\gamma}-\frac{\varepsilon}{2}\gamma+\frac{2}{\gamma}\beta(\varepsilon)\right)-2\beta(\varepsilon)\right ).
\end{equation}
Now, we observe that
\begin{eqnarray*}
2\sqrt{\lambda\pi}\int_{\Rd}e^{\lambda f_n^2}d\mu_s(x) =\int_{\Rd}\int_\R e^{\sigma f_n-\frac{\sigma^2}{4\lambda}}\,d\sigma\,d\mu_s(x) = \int_\R H_n(\sigma)e^{-\frac{\sigma^2}{4\lambda}}d\sigma.
\end{eqnarray*}
Moreover, by \eqref{est_hdelta}
\begin{equation}
\int_\R H_n(\sigma)e^{-\frac{\sigma^2}{4\lambda}}d\sigma \leq \int_\R e^{\sigma^2(\frac{\varepsilon}{2}-\frac{1}{4\lambda})+\sigma\left(\log\|e^{\gamma |\cdot|}\|_{1,\mu_s}^{1/\gamma}-\frac{\varepsilon}{2}\gamma+\frac{2}{\gamma}\beta(\varepsilon)\right)}\,d\sigma,
\label{3.15}
\end{equation}
which is finite for every $0<\lambda<\frac{1}{2\varepsilon}$ and $n\in \N$.
By the arbitrariness of $\varepsilon$ and observing that
$\sup_{s\in I}\|e^{\gamma |\cdot|}\|_{1,\mu_s}^{1/\gamma}<+\infty$, by the first part of the proof, we deduce that
\begin{equation}\label{lett_n}
\int_{\Rd}e^{\lambda f_n^2(x)}d\mu_s(x) \le K,\qquad\;\, \lambda>0,\;\, n\in \N,
\end{equation}
for some positive constant $K$, independent of $s$. Finally, we get the claim by the monotone convergence theorem letting $n \to +\infty$ in \eqref{lett_n}.
\end{proof}

\begin{rmk}\rm{
\begin{enumerate}[\rm (i)]
\item
Actually, formula \eqref{3.15} shows that, just
assuming the validity of the estimate \eqref{LSI}, one can deduce that the functions $\varphi_\lambda$ belong to $L^1(\Rd,\mu_s)$ and
$\sup_{s\in I}\|\varphi_{\lambda}\|_{1,\mu_s}<+\infty$ for every $\lambda <(2C)^{-1}$
where $C$ is the constant in \eqref{LSI}.
\item
We point out that in the proof of Proposition \ref{integrability} we have not used the invariance of the measures $\{\mu_t:t\in I\}$. 
\end{enumerate}}
\end{rmk}

Next proposition is an Harnack-type estimate satisfied by the evolution operator $G(t,s)$.
The proof of this result is essentially based on the gradient estimates \eqref{grad_est_punt} and
extends the method used in \cite{wang_97} to the nonautonomous case.

\begin{prop}[An Harnack-type inequality]\label{Har_prop}
For every $f \in C_b(\Rd)$, $p>1$, $t>s$ and $x,y \in \Rd$ we have
\begin{equation}\label{Harnack}
|(G(t,s)f)(x)|^p\leq (G(t,s)|f|^p)(y)\exp{\left(\frac{p\,|x-y|^2}{4(p-1)\eta_0(t-s)}\right)}.
\end{equation}
\end{prop}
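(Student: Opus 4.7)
The plan is to adapt Wang's semigroup-coupling technique to the nonautonomous setting, using as the essential input the gradient estimate \eqref{grad_est_punt} and the parabolic regularity of $G(\cdot,s)f$. First I would reduce to the case where $f\in C^1_b(\Rd)$ is bounded below by a positive constant. Indeed, by Lemma \ref{lemm-prel}, $|G(t,s)f|\le G(t,s)|f|$ for general $f\in C_b(\Rd)$, so it suffices to prove \eqref{Harnack} for $f\ge 0$; then replacing $f$ by $f+\varepsilon$ and letting $\varepsilon\to 0^+$ (together with a standard smoothing) handles the regularity and strict positivity. In what follows I therefore assume $f\ge\delta>0$ and $f\in C^1_b(\Rd)$, so that $u(r,\cdot):=G(r,s)f$ is everywhere positive and smooth on $(s,+\infty)\times\Rd$.

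Fix $s<t$ and $x,y\in\Rd$, and take the linear interpolation $\gamma(r):=y+\frac{r-s}{t-s}(x-y)$, so that $\gamma(s)=y$, $\gamma(t)=x$ and $|\gamma'(r)|^2=|x-y|^2/(t-s)^2$. Define
\begin{equation*}
\psi(r):=\bigl[G(t,r)(u(r,\cdot)^p)\bigr](\gamma(r)),\qquad r\in[s,t].
\end{equation*}
Then $\psi(s)=(G(t,s)f^p)(y)$ and $\psi(t)=((G(t,s)f)(x))^p$, so the claim reduces to controlling $\psi(t)/\psi(s)$.

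To differentiate $\psi$, I combine Proposition \ref{deriv} applied backwards in time with the forward equation $\partial_r u=\mathcal{A}(r)u$, and with the chain-rule identity $\mathcal{A}(r)(u^p)-\partial_r(u^p)=p(p-1)u^{p-2}\langle Q(r)\nabla u,\nabla u\rangle$; setting $F(r,z):=[G(t,r)(u(r,\cdot)^p)](z)$, the $r$-derivative of $F$ reads
\begin{equation*}
\partial_r F(r,z)=-p(p-1)\bigl[G(t,r)\bigl(u^{p-2}\langle Q(r)\nabla u,\nabla u\rangle\bigr)\bigr](z)\le -p(p-1)\eta_0\,A(r,z),
\end{equation*}
where $A(r,z):=[G(t,r)(u^{p-2}|\nabla u|^2)](z)$. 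For the spatial contribution, the gradient estimate \eqref{grad_est_punt} (with exponent $1$) applied to $u^p$ gives
\begin{equation*}
|\nabla_z F(r,z)|\le p\,e^{r_0(t-r)}\bigl[G(t,r)(u^{p-1}|\nabla u|)\bigr](z),
\end{equation*}
and Cauchy--Schwarz for the probability kernel $g_{t,r}(z,\cdot)\,dw$ yields $G(t,r)(u^{p-1}|\nabla u|)\le A^{1/2}F^{1/2}$. Putting the pieces together at the moving point $\gamma(r)$ and applying $ab\le \theta a^2+b^2/(4\theta)$ with $\theta=p(p-1)\eta_0$ to absorb the negative term, I obtain
\begin{equation*}
\psi'(r)\le \frac{p\,e^{2r_0(t-r)}|x-y|^2}{4(p-1)\eta_0(t-s)^2}\,\psi(r),\qquad r\in(s,t).
\end{equation*}
Since $r_0<0$ implies $e^{2r_0(t-r)}\le 1$, dividing by $\psi(r)>0$ and integrating on $[s,t]$ produces exactly the exponent $p|x-y|^2/(4(p-1)\eta_0(t-s))$ appearing in \eqref{Harnack}.

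The main obstacle is the rigorous justification of the computation of $\partial_r F$: the function $u(r,\cdot)^p$ is neither compactly supported nor in $C^2_b(\Rd)$, so Proposition \ref{deriv} does not apply directly. I would therefore mirror the cut-off scheme already used in the proof of \eqref{log_sob_G}: truncate by $\theta_n$ from \eqref{thetan}, differentiate the resulting time-dependent functional (which is legitimate by Propositions \ref{deriv} and \ref{lemma-3.1}), and show, via \eqref{Artheta}, the uniform bound on $|\nabla\theta_n|$, and dominated/Fatou convergence, that the spurious terms involving $\mathcal{A}(r)\theta_n$ and $\langle Q(r)\nabla\theta_n,\nabla_zG(r,s)f\rangle$ vanish as $n\to+\infty$. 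Once this differentiability is in place, the differential inequality above and the elementary integration complete the argument.
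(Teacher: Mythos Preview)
Your proposal is correct and follows essentially the same route as the paper: define the interpolated functional $r\mapsto [G(t,r)(G(r,s)f)^p](\gamma(r))$ along the linear path, differentiate using Proposition~\ref{deriv} after inserting the cut-off $\theta_n$, combine the gradient estimate \eqref{grad_est_punt} with a quadratic/Young inequality to reach the differential bound $\psi'/\psi\le p|x-y|^2/(4(p-1)\eta_0(t-s)^2)$, and integrate. The only cosmetic differences are that the paper applies the quadratic inequality pointwise inside $G(t,r)[\cdot]$ rather than via Cauchy--Schwarz on the kernel, and it works directly with nonnegative $f\in C^1_b(\Rd)$ (using Lemma~\ref{lemm-prel} for strict positivity of $G(r,s)f$ when $r>s$) rather than with $f\ge\delta>0$; also, Proposition~\ref{lemma-3.1} is not actually needed here---Proposition~\ref{deriv} alone suffices once the cut-off is in place.
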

\begin{proof}
Since $|G(t,s)f|\leq G(t,s)|f|$
for every $f\in C_b(\Rd)$ and $t>s$, it suffices to prove \eqref{Harnack} for nonnegative functions $f$.

We split the proof into two steps. In the first one we prove \eqref{Harnack} for nonnegative functions $f\in C^1_b(\Rd)$.
In the second step, by standard approximation arguments we extend \eqref{Harnack} to every nonnnegative function $f\in C_b(\Rd)$.

{\em Step 1.} Let $f\in C^1_b(\Rd)$ be a nonnegative function. Fix $t>s$, $x,y \in \Rd$ and set
\begin{eqnarray*}
\Phi_n(r):=\{G(t,r)[\theta_n (G(r,s)f)^p]\}(\psi(r)),\qquad\;\, s\leq r\leq t,
\end{eqnarray*}
where $\theta_n$ is the sequence of cut-off functions defined in \eqref{thetan} and
\begin{eqnarray*}
\psi(r):=\left(\frac{t-r}{t-s}\right) y+\left(\frac{r-s}{t-s}\right) x,\qquad\;\, s\leq r\leq t.
\end{eqnarray*}
By Lemma \ref{lemm-prel} and Proposition \ref{deriv}, the function $\log \Phi_n$ is well defined, it belongs to
$C^1((s,t))$ for every $n \in \N$ and there exist $n_0\in \N$ and a
positive constant $C_{\Phi}$ such that $\Phi_n(r)\geq C_{\Phi}$ for every $n>n_0$ and $r \in [s,t]$.
This last assertion follows since $\Phi_n(r)>0$ for every $r<t$ and $\Phi_n(t)=(\theta_n (G(t,s)f)^p)(x)$ for every $n \in \N$.
Hence, choosing $n$ large enough such that $x \in \supp \theta_n$ we conclude.

Differentiating the functions $r \mapsto \log \Phi_n(r)$ ($n\in\N$) in $(s,t)$ we get
\begin{align}\label{der_log}
\frac{d}{dr}\log \Phi_n(r)=\frac{1}{\Phi_n(r)}\bigg\{&
-\{G(t,r)[\mathcal{A}(r)(\theta_n (G(r,s)f)^p)]\}(\psi(r))\notag\\
&+ \{G(t,r)[\theta_n D_r (G(r,s)f)^p]\}(\psi(r))\notag\\
&+\frac{1}{t-s}\langle \nabla_x[G(t,r)(\theta_n (G(r,s)f)^p)](\psi(r)) , x-y \rangle\bigg\}.
\end{align}
Let observe that, if $g=G(\cdot,s)f$, then
\begin{align*}
D_r g^p-\mathcal{A}(r)g^p=-p(p-1)g^{p-2}\langle Q(r)\nabla_x g,\nabla_x g\rangle
\end{align*}
and
\begin{align*}
|\nabla_x G(t,r)(\theta_n g^p)|&\leq G(t,r)|\nabla_x(\theta_n g^p)|\\
&\leq G(t,r)(|\nabla \theta_n| g^p+p\eta_0^{-1/2}\theta_n g^{p-1}|Q^{1/2}(r)\nabla_x g|),
\end{align*}
where in the last inequality we have used \eqref{grad_est_punt} and \eqref{ell}.
From \eqref{der_log} we get
\begin{align*}
&\frac{d}{dr}\log \Phi_n(r)\\
\leq -&\frac{1}{\Phi_n(r)}\Big\{G(t,r)\big [(G(r,s)f)^p \mathcal{A}(r)\theta_n+2p\,(G(r,s)f)^{p-1}\langle Q(r)\nabla\theta_n,\nabla_x G(r,s)f\rangle\\
&\quad\qquad\qquad\qquad+p(p-1)\theta_n(G(r,s)f)^{p-2}\langle Q(r)\nabla_x G(r,s)f,\nabla_x G(r,s)f \rangle\big ]\\
& \quad\qquad\;\; -\frac{|x-y|}{t-s}G(t,r)\big [p\eta_0^{-1/2}\theta_n (G(r,s)f)^{p-1}|Q^{1/2}(r)\nabla_x G(r,s)f|\\
& \qquad\qquad\qquad\qquad\qquad\qquad\, +|\nabla \theta_n|(G(r,s)f)^p\big]\Big\}(\psi(r)),
\end{align*}
hence,
\begin{align*}
&\frac{d}{dr}\log \Phi_n(r)\\
\leq &\frac{1}{\Phi_n(r)}\Big\{G(t,r)\Big[g_n(r)
-p\theta_n(G(r,s)f)^p\Big ((p-1)h^2(r)-\frac{|x-y|}{\sqrt{\eta_0}(t-s)}h(r)\Big )\Big]\Big\}(\psi(r)),
\end{align*}
where
\begin{align*}
g_n(r)= &\,(G(r,s)f)^p\left(|{\rm Tr}(Q(r)D^2\theta_n)|-\langle b(r,\cdot),\nabla \theta_n\rangle\right)\\
&+2p (G(r,s)f)^{p-1}|\langle Q(r)\nabla\theta_n,\nabla_x G(r,s)f\rangle|+\frac{|x-y|}{t-s} |\nabla \theta_n|(G(r,s)f)^p
\end{align*}
and $h(r)= (G(r,s)f)^{-1}|Q^{1/2}(r)\nabla_x G(r,s)f|$.
Since
\begin{eqnarray*}
\langle b(r,x),\nabla \theta_n(x)\rangle \geq  \eta'\left(\frac{|x|}{n}\right)\frac{C_{s,t}}{n^2}, \qquad\;\, r \in [s,t],
\end{eqnarray*}
where $C_{s,t}$ is the constant in \eqref{inequality-b}, we can estimate
\begin{align}
g_n(r)\leq \frac{C_1}{n^2} \|f\|_{\infty}^p+\frac{1}{n}\Big(2p \Lambda \|f\|_{\infty}^{p-1}\|\,|\nabla f|\,\|_{\infty} +\frac{|x-y|}{t-s} \|f\|^p_{\infty}\Big)=:C(n),
\label{C(n)}
\end{align}
for every $r\in [s,t]$, where $C_1= d\Lambda(2\|\eta'\|_{\infty}+\|\eta''\|_{\infty})+\|\eta'\|_{\infty}C_{s,t}$.

Recalling that $\gamma^2-\beta \gamma\geq -\beta^2/4$ for every $\beta,\gamma \in \R$ and
$G(t,s)g_1\geq G(t,s)g_2$ for every $t\ge s$ if $g_1\geq g_2$ (see Lemma \ref{lemm-prel}) from \eqref{C(n)} we deduce that
\begin{eqnarray*}
\frac{d}{dr}\log \Phi_n(r)\leq \frac{C(n)}{C_{\Phi}}+\frac{p\,|x-y|^2}{4 (p-1)\eta_0(t-s)^2},
\end{eqnarray*}
for every $n>n_0$.
Integrating with respect to $r$ between $s$ and $t$ we get
\begin{eqnarray*}
\log \Phi_n(t)-\log \Phi_n(s) \leq \frac{C(n)}{C_{\Phi}}(t-s)+\frac{p\,|x-y|^2}{4 (p-1)\eta_0(t-s)},\qquad\;\, n>n_0,
\end{eqnarray*}
and \eqref{Harnack} follows letting $n \to +\infty$.

{\em Step 2.}
Let $f \in C_b(\Rd)$ be a nonnegative function; we can consider a sequence  $(f_n)_n\subset C^1_b(\Rd)$ of nonnegative functions
converging to $f$ uniformly on compact sets of $\Rd$ and such that $\|f_n\|_{\infty}\leq \|f\|_{\infty}$.
Then, by Step 1 we have
\begin{equation*}
|(G(t,s)f_n)(x)|^p\leq (G(t,s)|f_n|^p)(y)\exp{\left(\frac{p\,|x-y|^2}{4(p-1)\eta_0(t-s)}\right)},
\end{equation*}
for every $t>s\in I$ and $x,y\in\Rd$. Taking into account formula \eqref{rep_nucleo}, this yields the claim by the dominated convergence theorem.
\end{proof}

The second announced characterization of the supercontractivity of $G(t,s)$ is given
in the following theorem. Its proof is based on Propositions \ref{integrability}, \ref{Har_prop}
and also on the first criterion given in Theorem \ref{super_caract}.

\begin{thm}\label{second_chara}
The following properties are equivalent.
\begin{enumerate}[\rm (i)]
\item
The evolution operator $G(t,s)$ is supercontractive;
\item
the function $\varphi_{\lambda}$ belongs to $L^1(\Rd,\mu_s)$ for every $\lambda>0$ and $s\in I$. Moreover,
\begin{equation}
\sup_{s\in I}\|\varphi_{\lambda}\|_{1,\mu_s}<+\infty,\qquad\;\,\lambda>0.
\label{unif-varphi-lambda}
\end{equation}
\end{enumerate}
\end{thm}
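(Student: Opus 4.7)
My plan is to prove the two implications separately, relying heavily on results already established in the paper; one direction is essentially a concatenation of existing machinery, while the other requires combining the Harnack inequality with the uniform Gaussian integrability hypothesis.

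For the implication $(i)\Rightarrow(ii)$, the argument is short. By Theorem~\ref{super_caract}, supercontractivity of $G(t,s)$ is equivalent to the validity of the family (LSI$_\varepsilon$). The second part of Proposition~\ref{integrability} then delivers exactly the Gaussian integrability $\varphi_\lambda\in L^1(\Rd,\mu_s)$ together with the uniform-in-$s$ bound $\sup_{s\in I}\|\varphi_\lambda\|_{1,\mu_s}<+\infty$ for every $\lambda>0$, which is condition (ii). Nothing else is needed.

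For the converse $(ii)\Rightarrow(i)$, the key idea is to use the Harnack-type estimate of Proposition~\ref{Har_prop} to convert the integrability of $f^p$ with respect to $\mu_s$ into a pointwise upper bound on $(G(t,s)f)(x)$ which is Gaussian in $|x|$, and then integrate that bound against $\mu_t$ using (ii). I would fix $1<p<q<+\infty$, $s<t$ in $I$, and a nonnegative $f\in C_b(\Rd)$ (this loses no generality since $|G(t,s)f|\le G(t,s)|f|$). Rearranging the Harnack inequality, integrating in $y$ with respect to $\mu_t$, and invoking the invariance identity $\int_{\Rd}G(t,s)f^p\,d\mu_t=\int_{\Rd}f^p\,d\mu_s$, one obtains
\begin{equation*}
(G(t,s)f)(x)^p\le\frac{\|f\|_{p,\mu_s}^p}{\displaystyle\int_{\Rd}\exp\left(-\frac{p|x-y|^2}{4(p-1)\eta_0(t-s)}\right)d\mu_t(y)}.
\end{equation*}
The denominator is lower-bounded by combining tightness of $\{\mu_t:t\in I\}$ (which furnishes an $R>0$ with $\mu_t(B(0,R))\ge 1/2$ uniformly in $t$) with the crude inequality $|x-y|^2\le 2|x|^2+2R^2$ for $|y|\le R$. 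This produces an estimate of the form
\begin{equation*}
(G(t,s)f)(x)^p\le K_1(t-s)\exp\left(\frac{p|x|^2}{2(p-1)\eta_0(t-s)}\right)\|f\|_{p,\mu_s}^p.
\end{equation*}
Raising to the power $q/p$, integrating in $x$ against $\mu_t$, and applying hypothesis (ii) with the value $\lambda=\frac{q}{2(p-1)\eta_0(t-s)}$ yields
\begin{equation*}
\|G(t,s)f\|_{q,\mu_t}\le C_{p,q}(t-s)\|f\|_{p,\mu_s},
\end{equation*}
for a function $C_{p,q}$ which depends only on $t-s$, is decreasing, and blows up as $t-s\to 0^+$, since $\lambda\to+\infty$ and $\lambda\mapsto\|\varphi_\lambda\|_{1,\mu_t}$ is increasing. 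A standard density extension from $C_b(\Rd)$ to $L^p(\Rd,\mu_s)$ then gives supercontractivity.

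The main obstacle is the bookkeeping in the second direction: ensuring that the final constant $C_{p,q}$ really depends only on the gap $t-s$, and not on $s,t$ individually. This is precisely where both features of (ii) are needed simultaneously: the uniformity $\sup_{s\in I}\|\varphi_\lambda\|_{1,\mu_s}<+\infty$ controls the integration in $x$ against $\mu_t$ independently of $t$, while tightness of the evolution system of measures provides a uniform-in-$t$ choice of $R$ for the lower bound on the Gaussian integral. All the remaining manipulations are routine calculus with exponentials.
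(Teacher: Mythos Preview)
Your proof is correct and follows essentially the same approach as the paper's own proof: the implication $(i)\Rightarrow(ii)$ is obtained by concatenating Theorem~\ref{super_caract} with Proposition~\ref{integrability}, and the implication $(ii)\Rightarrow(i)$ is obtained by integrating the Harnack inequality in $y$ against $\mu_t$, lower-bounding the resulting Gaussian integral via tightness, and then integrating the pointwise Gaussian bound on $(G(t,s)f)(x)^q$ against $\mu_t$ using the uniform bound $\sup_{s\in I}\|\varphi_\lambda\|_{1,\mu_s}<+\infty$ with $\lambda=q/(2(p-1)\eta_0(t-s))$. The only cosmetic differences are your choice $\mu_t(B(0,R))\ge 1/2$ in place of the paper's $\mu_t(B(0,R))>2^{-p}$ and your explicit use of $|x-y|^2\le 2|x|^2+2R^2$, neither of which affects the argument.
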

\begin{proof}
``$(i)\Rightarrow (ii)$'' Theorem \ref{super_caract} and Proposition \ref{integrability}  show that, if $G(t,s)$ is supercontractive, then $\varphi_{\lambda}$ belongs
to $L^1(\Rd,\mu_s)$ for every $\lambda>0$ and any $s\in I$ and \eqref{unif-varphi-lambda} holds true.

``$(ii)\Rightarrow (i)$'' Let us assume that \eqref{unif-varphi-lambda} holds true and denote by $M_{\lambda}$ the supremum in the left hand side of
such an inequality.
Integrating the Harnack inequality \eqref{Harnack} with respect $d\mu_t(y)$
and recalling that $\{\mu_t: t \in I\}$
is an evolution system of measures, we get
\begin{align}
\int_{\Rd}|f(y)|^p d\mu_s(y)&= \int_{\Rd}(G(t,s)|f|^p)(y)d\mu_t(y)\nnm\\
&\geq|(G(t,s)f)(x)|^p\int_{\Rd}e^{-\frac{p\,|x-y|^2}{4\eta_0(p-1)(t-s)}}d\mu_t(y)\nnm\\
& \geq |(G(t,s)f)(x)|^p\mu_t(B(0,r))\, e^{-\frac{p(r^2+|x|^2)}{2\eta_0(p-1)(t-s)}},
\label{star}
\end{align}
for every $t>s,\,r>0, \,x,y \in \Rd,$ and $f \in C_b(\Rd)$. Hence,
\begin{equation}
|(G(t,s)f)(x)| \leq 2\exp\Big(\frac{R^2+|x|^2}{2\eta_0(p-1)(t-s)}\Big)\|f\|_{p,\mu_s},\qquad\;\, t>s,\;\,x\in \Rd,
\label{form-1}
\end{equation}
where $R$ is such that $\mu_t(B(0,R))>2^{-p}$, for any $t\in I$. Let us now fix $q>p$ and set $\lambda_0=(2\eta_0(p-1)(t-s))^{-1}q$.
By \eqref{form-1} we can estimate
\begin{align}
\int_{\Rd}|(G(t,s)f)(x)|^qd\mu_t(x) &\le  2^q\exp\left (\frac{R^2}{2\eta_0(p-1)(t-s)}\right )\|\varphi_{\lambda_0}\|_{1,\mu_s}\|f\|_{p,\mu_s}^q\notag\\
&\le 2^q\exp\left (\frac{R^2}{2\eta_0(p-1)(t-s)}\right )M_{(2\eta_0(p-1)(t-s))^{-1}q}\|f\|_{p,\mu_s}^q\notag\\
&=: C_{p,q}(t-s)\|f\|_{p,\mu_s}^q,
\label{Cpq}
\end{align}
for any $I\ni s<t$. Now, it is clear the monotonicity of the function $r\mapsto C_{p,q}(r)$ and that, by density, we can extend the previous inequality to any $f\in L^p(\Rd,\mu_s)$. This completes the proof.
\end{proof}

Our aim is now to provide a sufficient condition for the supercontractivity of the evolution operator $G(t,s)$.
First we prove a preliminary lemma.

\begin{lemm}\label{Lya_prop}
Assume that there exist $K,\beta>0$ and $R>1$ such that
\begin{eqnarray*}
\langle b(t,x),x\rangle \leq -K |x|^2 (\log |x|^2)^{\beta},\qquad\;\, t\in I,\;\, |x|\ge R.
\end{eqnarray*}
Then, any positive $\psi_{\lambda,\delta}\in C^2(\Rd)$ which agrees with the function $x\mapsto e^{\lambda |x|^2(\log|x|^2)^{\delta}}$ for any $x\in \Rd\setminus B(0,R)$, is a Lyapunov function
 satisfying \eqref{Lyapunov} for every $\lambda <K(2\Lambda)^{-1}$, if $\delta=\beta$, and for every $\lambda>0$, if $\delta\in [0,\beta)$.
\end{lemm}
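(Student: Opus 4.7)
The plan is to verify the Lyapunov inequality $(\mathcal{A}(t)\psi_{\lambda,\delta})(x) \leq a - \gamma\,\psi_{\lambda,\delta}(x)$ for all $(t,x)\in I\times\Rd$ with suitably chosen $a,\gamma>0$, plus the obvious coercivity $\psi_{\lambda,\delta}(x)\to+\infty$ as $|x|\to+\infty$. On the compact set $\overline{B(0,R)}$ the function $\psi_{\lambda,\delta}$ is a given positive $C^2$-function: $\mathcal{A}(t)\psi_{\lambda,\delta}$ is bounded there (by the standing regularity of $Q$ and $b$) and $\psi_{\lambda,\delta}$ is bounded away from $0$, so the inequality will hold trivially provided that $a$ is taken large enough. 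Hence the real work is confined to $|x|>R$, where $\psi_{\lambda,\delta}(x)=e^{\phi(x)}$ with $\phi(x):=\lambda|x|^2(\log|x|^2)^{\delta}$, and I only need to show that the ratio $(\mathcal{A}(t)\psi_{\lambda,\delta})/\psi_{\lambda,\delta}$ tends to $-\infty$ as $|x|\to+\infty$, uniformly with respect to $t\in I$.

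The chain rule gives
\begin{align*}
(\mathcal{A}(t)\psi_{\lambda,\delta})(x)=\psi_{\lambda,\delta}(x)\bigl[\mathrm{Tr}(Q(t)D^2\phi)+\langle Q(t)\nabla\phi,\nabla\phi\rangle+\langle b(t,x),\nabla\phi\rangle\bigr],
\end{align*}
so the entire analysis reduces to bookkeeping the growth in $|x|$ and $\log|x|$ of the three summands in the bracket. Since $\phi$ is radial, a direct computation yields
\begin{align*}
\nabla\phi(x)=2\lambda(\log|x|^2)^{\delta-1}(\log|x|^2+\delta)\,x,
\end{align*}
together with a similar explicit expression for $D^2\phi$. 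Using $\langle Q(t)\xi,\xi\rangle\leq\Lambda|\xi|^2$, $\mathrm{Tr}(Q(t))\leq d\Lambda$ and the drift hypothesis (crucially, the scalar coefficient $2\lambda(\log|x|^2)^{\delta-1}(\log|x|^2+\delta)$ is positive for $|x|$ large, since $R>1$), one finds that $\mathrm{Tr}(Q(t)D^2\phi)=O((\log|x|^2)^{\delta})$; that $\langle Q(t)\nabla\phi,\nabla\phi\rangle$ has leading behaviour $4\Lambda\lambda^2|x|^2(\log|x|^2)^{2\delta}$; and that $\langle b(t,x),\nabla\phi\rangle\leq -2K\lambda|x|^2(\log|x|^2)^{\beta+\delta-1}(\log|x|^2+\delta)$, with leading order $-2K\lambda|x|^2(\log|x|^2)^{\beta+\delta}$, uniformly in $t\in I$.

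The asymptotics of $(\mathcal{A}(t)\psi_{\lambda,\delta})/\psi_{\lambda,\delta}$ are then governed by the two ``principal'' terms, of respective orders $|x|^2(\log|x|^2)^{2\delta}$ and $|x|^2(\log|x|^2)^{\beta+\delta}$. If $\delta\in[0,\beta)$ then $\beta+\delta>2\delta$, so the drift term dominates and the ratio tends to $-\infty$ for every $\lambda>0$, uniformly in $t$. If $\delta=\beta$, the two principal terms are both of order $|x|^2(\log|x|^2)^{2\beta}$, and their combined coefficient is $2\lambda(2\Lambda\lambda-K)$, which is strictly negative exactly when $\lambda<K/(2\Lambda)$; under this condition the ratio again tends to $-\infty$ uniformly in $t$. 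In either case, given any $\gamma>0$ one finds $R_1>R$ such that $\mathcal{A}(t)\psi_{\lambda,\delta}(x)\leq -\gamma\,\psi_{\lambda,\delta}(x)$ for $|x|>R_1$ and $t\in I$, and combining with the trivial bound on $\overline{B(0,R_1)}$ produces the desired Lyapunov inequality.

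The main technical point to be careful about is the borderline case $\delta=\beta$: here the diffusion and drift principal terms have identical orders in both $|x|$ and $\log|x|$, so the threshold $\lambda<K/(2\Lambda)$ arises precisely as the condition that the algebraic sum of their leading coefficients is negative, and one has to verify that the remaining contributions (notably $\mathrm{Tr}(Q(t)D^2\phi)$ and the lower-order pieces hidden in $(\log|x|^2+\delta)^2$ versus $(\log|x|^2)^2$) are genuinely of strictly lower order, so that they cannot spoil the sign.
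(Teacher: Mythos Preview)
Your proof is correct and follows essentially the same route as the paper's: compute $\mathcal{A}(t)\psi_{\lambda,\delta}$ explicitly for $|x|>R$, identify the two competing principal terms of orders $|x|^2(\log|x|^2)^{2\delta}$ and $|x|^2(\log|x|^2)^{\beta+\delta}$, and observe that the bracket tends to $-\infty$ (for all $\lambda>0$ when $\delta<\beta$, and for $\lambda<K/(2\Lambda)$ when $\delta=\beta$), from which the Lyapunov inequality follows. Your chain-rule presentation and explicit treatment of $\overline{B(0,R)}$ add a bit of narrative structure, but the mathematical content is identical to the paper's computation.
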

\begin{proof}
A straightforward computation shows that
\begin{align*}
(\mathcal{A}(t)\psi_{\lambda,\delta})(x)=2\lambda\psi_{\lambda,\delta}(x)\bigg\{&2\lambda (\log |x|^2)^{2\delta}\langle Q(t)x,x\rangle
+2\delta^2\lambda(\log |x|^2)^{2\delta-2}\langle Q(t)x,x\rangle\\
&+ 4\delta\lambda(\log |x|^2)^{2\delta-1}\langle Q(t)x,x\rangle
+{\rm Tr}(Q(t))(\log |x|^2)^{\delta}\\
&+2\delta(\log |x|^2)^{\delta-1}\frac{\langle Q(t)x,x\rangle}{|x|^2}+\delta {\rm Tr}(Q(t))
(\log |x|^2)^{\delta-1}\\
&+2\delta(\delta-1)\lambda (\log |x|^2)^{\delta-2}\frac{\langle Q(t)x,x\rangle}{|x|^2}\\
&+\langle b(t,x),x\rangle (\log |x|^2)^{\delta}+\delta\langle b(t,x),x\rangle (\log |x|^2 )^{\delta-1}\bigg\}\\
\le 2\lambda\psi_{\lambda,\delta}(x)\bigg\{&2\lambda\Lambda |x|^2(\log |x|^2)^{2\delta}-K|x|^2(\log |x|^2)^{\beta+\delta}\\
&+o(|x|^2(\log |x|^2)^{\beta+\delta})\bigg\},
\end{align*}
as $|x|\to +\infty$. Hence, the function in brackets tends to $-\infty$ as $|x|\to +\infty$, if $\gamma$ and $\lambda$ are as in the statement
of the lemma. It is now immediate to show that there exist two positive constants $a=a(\lambda,\delta)$ and $\gamma=\gamma(\lambda,\delta)$ such that
${\mathcal A}(t)\psi_{\lambda,\delta}\le a-\gamma \psi_{\lambda,\delta}$ for any $t\in I$ and \eqref{Lyapunov} holds.
\end{proof}

\begin{thm}\label{suff_cond_super}
Assume that there exist $K_1>0$ and $R>1$ such that
\begin{equation}\label{super_ipo-1}
\langle b(t,x),x\rangle \leq K_1 |x|^2 \log |x|,\qquad\;\, t\in I,\;\, |x|\ge R.
\end{equation}
Then, the evolution operator $G(t,s)$ is supercontractive.
\end{thm}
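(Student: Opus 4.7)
The plan is to reduce supercontractivity to uniform Gaussian integrability via Theorem \ref{second_chara} and then to establish such integrability through a Lyapunov-function argument. By Theorem \ref{second_chara}, it is enough to prove that, under hypothesis \eqref{super_ipo-1},
\begin{equation*}
\sup_{s\in I}\int_{\Rd}\varphi_\lambda\,d\mu_s<+\infty\qquad\text{for every }\lambda>0.
\end{equation*}

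The drift control \eqref{super_ipo-1} matches the hypothesis of Lemma \ref{Lya_prop} with $\beta=1$ (since $\log|x|^2=2\log|x|$). Applying the lemma with $\delta=0\in[0,\beta)$ delivers, for every $\lambda>0$, a smooth positive function $\psi_\lambda:=\psi_{\lambda,0}$ that agrees with $\varphi_\lambda$ outside $B(0,R)$ and satisfies the pointwise Lyapunov inequality
\begin{equation*}
({\mathcal A}(t)\psi_\lambda)(x)\le a_\lambda-\gamma_\lambda\psi_\lambda(x),\qquad (t,x)\in I\times\Rd,
\end{equation*}
for positive constants $a_\lambda,\gamma_\lambda$. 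What remains is the passage from this pointwise estimate to the uniform integral bound $\sup_{s\in I}\int_{\Rd}\psi_\lambda\,d\mu_s\le a_\lambda/\gamma_\lambda$, which combined with $\varphi_\lambda\le\psi_\lambda+e^{\lambda R^2}$ on $\Rd$ yields the conclusion through Theorem \ref{second_chara}.

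To make this passage I would combine two truncations. A smooth concave nondecreasing truncation $h_M$ of the values (with $h_M(y)=y$ for $y\le M/2$ and $h_M(y)=M$ for $y\ge M$) produces a bounded smooth function $\psi_\lambda^M:=h_M(\psi_\lambda)$ on which, using $h_M''\le 0$ together with the ellipticity \eqref{ell}, one has ${\mathcal A}(t)\psi_\lambda^M\le h_M'(\psi_\lambda)({\mathcal A}(t)\psi_\lambda)\le h_M'(\psi_\lambda)(a_\lambda-\gamma_\lambda\psi_\lambda)$, which is bounded above by $a_\lambda-\gamma_\lambda\psi_\lambda^M$ on $\{\psi_\lambda\le M/2\}$ and by $a_\lambda$ everywhere. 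Combined with the spatial cut-offs $\theta_n$ of \eqref{thetan}, one can then apply Proposition \ref{lemma-3.1} to $\theta_n\psi_\lambda^M\in C^2_c(\Rd)$ and, on letting $n\to+\infty$, derive a Gr\"onwall-type differential inequality for $r\mapsto\int_{\Rd}\psi_\lambda^M\,d\mu_r$. The commutator errors coming from $\theta_n$ vanish as $n\to+\infty$ because $|{\mathcal A}(r)\theta_n|=O(n^{-2})$ (via \eqref{inequality-b} on compact subintervals of $I$), $|\nabla\theta_n|=O(n^{-1})$ and $|\nabla\psi_\lambda^M|$ is bounded for fixed $M$ (indeed $h_M'$ is compactly supported). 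Integrating from $s$ to $t$ and using the invariance \eqref{invariance_intro} (valid for the bounded function $\psi_\lambda^M$) yields $\int_{\Rd}\psi_\lambda^M\,d\mu_s\le a_\lambda/\gamma_\lambda+Me^{-\gamma_\lambda(t-s)}$; sending first $t\to+\infty$ and then $M\to+\infty$ by monotone convergence produces the desired bound.

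The principal obstacle is precisely this transfer from the pointwise Lyapunov inequality to the uniform integral bound. Since $\psi_\lambda$ is super-exponentially large at infinity and not a priori $\mu_s$-integrable, while Proposition \ref{lemma-3.1} demands compactly supported test functions, a double truncation, both in space (through $\theta_n$) and in values (through $h_M$), is unavoidable. The delicate point is to carry out the limits in the correct order---first $n\to+\infty$ (removing the spatial cut-off), then $t\to+\infty$, and only finally $M\to+\infty$---so that at each stage the quantities under control remain finite, the bound produced is independent of $M$ and $s$, and no circular appeal to the $\mu_s$-integrability of $\psi_\lambda$ itself creeps in.
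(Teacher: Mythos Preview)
Your overall strategy coincides with the paper's: reduce to uniform Gaussian integrability via Theorem~\ref{second_chara}, produce the Lyapunov functions $\psi_\lambda=\psi_{\lambda,0}$ from Lemma~\ref{Lya_prop} with $\beta=1$, $\delta=0$, and then pass from the pointwise Lyapunov inequality to $\sup_{s\in I}\int\psi_\lambda\,d\mu_s<+\infty$. The paper does not carry out this last passage; it simply invokes \cite[Thm.~5.4]{KunLorLun09Non}, which states precisely that any $C^2$ function $\varphi$ diverging at infinity and satisfying ${\mathcal A}(t)\varphi\le a-\gamma\varphi$ obeys $\sup_{s\in I}\int\varphi\,d\mu_s\le a/\gamma$.

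Your attempt to reprove this step via a Gr\"onwall inequality for $F(r)=\int\psi_\lambda^M\,d\mu_r$ has a genuine gap. To obtain the claimed bound $F(s)\le a_\lambda/\gamma_\lambda+Me^{-\gamma_\lambda(t-s)}$ you would need $F'(r)\ge\gamma_\lambda F(r)-a_\lambda$, i.e.\ $\int\bigl({\mathcal A}(r)\psi_\lambda^M+\gamma_\lambda\psi_\lambda^M\bigr)\,d\mu_r\le a_\lambda$. But the pointwise integrand $g(\psi_\lambda)$ with $g(y)=h_M'(y)(a_\lambda-\gamma_\lambda y)+\gamma_\lambda h_M(y)$ equals $a_\lambda$ on $\{y\le M/2\}$ and rises to $\gamma_\lambda M$ on $\{y\ge M\}$ (indeed $g'(y)=h_M''(y)(a_\lambda-\gamma_\lambda y)\ge 0$ once $y>a_\lambda/\gamma_\lambda$). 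Thus on the set $\{\psi_\lambda>M/2\}$ the dissipative term is lost: you only get ${\mathcal A}(r)\psi_\lambda^M\le 0$, not ${\mathcal A}(r)\psi_\lambda^M\le a_\lambda-\gamma_\lambda\psi_\lambda^M$, and the resulting defect in the differential inequality is of order $\gamma_\lambda M\,\mu_r(\{\psi_\lambda>M/2\})$. Tightness alone (or even the exponential integrability of Proposition~\ref{integrability}) does not force $M\,\mu_r(\{\psi_\lambda>M/2\})\to 0$, so the argument does not close.

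The route taken in \cite{KunLorLun09Non} avoids this by working pointwise on the evolution rather than on $r\mapsto\mu_r$: one proves by parabolic comparison on balls (and then $n\to\infty$) the estimate $(G(t,s)\psi_\lambda)(x)\le a_\lambda/\gamma_\lambda+e^{-\gamma_\lambda(t-s)}\psi_\lambda(x)$ for the \emph{un-truncated} Lyapunov function, and transfers it to $\mu_s$ through the characterization of the tight evolution system (not merely through invariance on bounded test functions). If you want a self-contained proof, that is the argument to reproduce; the double-truncation Gr\"onwall scheme, as written, does not deliver the required inequality.
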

\begin{proof}
In view of \cite[Thm. 5.4]{KunLorLun09Non}, the proof is an immediate consequence of Theorem \ref{second_chara}
and Lemma \ref{Lya_prop}.
\end{proof}

\begin{rmk}\rm{
The condition \eqref{super_ipo-1} is quite optimal. Indeed, the autonomous operator
$(\mathcal{A}\zeta)(x)=\Delta \zeta(x)-\langle x,\nabla \zeta(x)\rangle$ does not satisfy it
and, in fact, it is well known that the associated
Ornstein-Uhlenbeck semigroup is not supercontractive with respect to the Gaussian invariant
measure $d\mu(x)=(2\pi)^{-d/2}e^{-|x|^2/2}dx$, as proved in \cite{Nelson}.}
\end{rmk}

\section{Ultraboundedness}\label{section_ultrabdd}

In this section we provide a condition equivalent to the ultraboundedness
property of the evolution operator $G(t,s)$. As in \cite{DaPLun10Ult,RocWan03Sup,wang_97,wan04Fun}), which deal with the autonomous case, 
we use the Harnack type estimate \eqref{Harnack}
satisfied by $G(t,s)$ to get ultraboundedness of $G(t,s)$.
However, we need to strengthen assumption \eqref{Lyapunov}, as next theorem shows.

\begin{thm}\label{ultrathm}
Assume that, for any $\lambda>0$, there exist $R=R(\lambda)>0$,
a convex increasing function $h_{\lambda}:[0,+\infty)\to \R$ such that
$1/h_{\lambda}\in L^1(c,+\infty)$ for large $c=c(\lambda)$ and
\begin{equation}
(\mathcal{A}(t)\varphi_{\lambda})(x)\leq -h_{\lambda}(\varphi_{\lambda}(x)),\qquad\;\, t\in I,\;\, |x|\geq R,
\label{conv}
\end{equation}
where $\varphi_\lambda$ is the Gaussian function defined by $\varphi_\lambda(x):=e^{\lambda|x|^2}$ for any $x \in \Rd$.
Then, $G(t,s)$ is
ultrabounded and it maps $L^p(\Rd, \mu_s)$ into $C_b(\Rd)$ for every $p>1$.
\end{thm}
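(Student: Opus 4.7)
The plan is to split the proof into two stages. In the first I would establish that \eqref{conv} forces $G(t,s)\varphi_{\lambda}$ to belong to $L^{\infty}(\Rd)$ with a sup-norm bound $\Psi_{\lambda}(t-s)$ depending only on $\lambda$ and $t-s$. In the second stage I would combine this bound with the Harnack-type estimate \eqref{Harnack} and the evolution property $G(t,s)=G(t,\tau)G(\tau,s)$ at the midpoint $\tau:=(s+t)/2$ to deduce \eqref{est_ultrabdd}.

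For the first stage, fix $t>s$ and $x\in\Rd$, and consider $w(\sigma):=(G(t,\sigma)\varphi_{\lambda})(x)$ for $\sigma\in[s,t]$. After approximating $\varphi_{\lambda}$ by functions which are $C^{2}$ and constant outside a compact set, Proposition \ref{deriv} yields, in the limit, $w'(\sigma)=-(G(t,\sigma)\mathcal{A}(\sigma)\varphi_{\lambda})(x)$. The assumption \eqref{conv}, together with the boundedness of $\mathcal{A}(\sigma)\varphi_{\lambda}+h_{\lambda}(\varphi_{\lambda})$ on the compact set $\overline{B(0,R)}$ uniformly for $\sigma\in[s,t]$, gives $\mathcal{A}(\sigma)\varphi_{\lambda}\le -h_{\lambda}(\varphi_{\lambda})+C_{\lambda}$ on $\Rd$ for a constant $C_{\lambda}\ge 0$. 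Since $g_{t,\sigma}(x,\cdot)\,dy$ is a probability measure and $h_{\lambda}$ is convex, Jensen's inequality produces $(G(t,\sigma)h_{\lambda}(\varphi_{\lambda}))(x)\ge h_{\lambda}(w(\sigma))$, whence $w'(\sigma)\ge h_{\lambda}(w(\sigma))-C_{\lambda}$. The integrability $1/h_{\lambda}\in L^{1}(c,+\infty)$ and monotonicity of $h_{\lambda}$ force $h_{\lambda}(u)\to +\infty$, so one may pick $M_{\lambda}$ with $h_{\lambda}(u)\ge 2C_{\lambda}$ for $u\ge M_{\lambda}$. Either $w(s)\le M_{\lambda}$, or $w\ge M_{\lambda}$ throughout $[s,t]$ (since $w$ is increasing in $\sigma$), and separating variables in $w'\ge \tfrac{1}{2}h_{\lambda}(w)$ and integrating over $[s,t]$ gives $\int_{w(s)}^{+\infty}du/h_{\lambda}(u)\ge (t-s)/2$, i.e. $w(s)\le F_{\lambda}^{-1}((t-s)/2)$ where $F_{\lambda}(u):=\int_{u}^{+\infty}du/h_{\lambda}(u)$. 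In either case the resulting bound $\Psi_{\lambda}(t-s)$ depends only on $\lambda$ and $t-s$, not on $x$.

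For the second stage, tightness of $\{\mu_{\tau}:\tau\in I\}$ provides $R_{0}>0$ with $\inf_{\tau\in I}\mu_{\tau}(B(0,R_{0}))\ge 1/2$. Integrating \eqref{Harnack} against $\mu_{\tau}$, exactly as in the passage from \eqref{star} to \eqref{form-1}, yields
\begin{equation*}
|(G(\tau,s)f)(y)|^{p}\le K_{1}e^{K_{2}/(t-s)}\varphi_{\lambda_{0}}(y)\|f\|_{p,\mu_{s}}^{p},\qquad \lambda_{0}:=\frac{p}{\eta_{0}(p-1)(t-s)},
\end{equation*}
for every $y\in\Rd$, with constants $K_{1},K_{2}>0$ depending only on $p$ and $R_{0}$. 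Jensen's inequality applied to $G(t,\tau)$ with the convex map $u\mapsto |u|^{p}$ gives $|G(t,s)f|^{p}\le G(t,\tau)(|G(\tau,s)f|^{p})$, and therefore
\begin{equation*}
\|G(t,s)f\|_{\infty}^{p}\le K_{1}e^{K_{2}/(t-s)}\|G(t,\tau)\varphi_{\lambda_{0}}\|_{\infty}\|f\|_{p,\mu_{s}}^{p}\le K_{1}e^{K_{2}/(t-s)}\Psi_{\lambda_{0}}((t-s)/2)\|f\|_{p,\mu_{s}}^{p}
\end{equation*}
by the first stage. The right-hand side depends only on $p$ and $t-s$; passing to its decreasing envelope in $r=t-s$ produces the function $C_{p,\infty}$ required by Definition \ref{definitions}. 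Finally, for the $C_{b}(\Rd)$-membership, decompose $G(t,s)=G(t,\tau)G(\tau,s)$: ultraboundedness yields $G(\tau,s)f\in L^{\infty}(\Rd)$, and the strong Feller property recalled after Definition \ref{definitions} ensures that $G(t,\tau)$ maps $L^{\infty}(\Rd)$ into $C_{b}(\Rd)$.

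The main obstacle is making the differential inequality in the first stage rigorous. Since $\varphi_{\lambda}$ is neither bounded nor constant outside a compact set, Proposition \ref{deriv} is not directly applicable. One must approximate $\varphi_{\lambda}$ by cutoffs such as $\eta_{n}\circ\varphi_{\lambda}$ or $\theta_{n}\varphi_{\lambda}$ (with $\theta_{n}$ as in \eqref{thetan}), derive the differential identity for the approximations, and pass to the limit while controlling the cutoff error terms via \eqref{Artheta} and the $L^{1}(\mu_{t})$-integrability of $\varphi_{\lambda}$, itself a consequence of the Lyapunov structure implied by \eqref{conv} via \cite[Thm. 5.4]{KunLorLun09Non}.
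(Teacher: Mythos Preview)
Your proposal is correct and follows the same overall architecture as the paper: first bound $\|G(t,s)\varphi_{\lambda}\|_{\infty}$ by a quantity depending only on $\lambda$ and $t-s$, then combine this with the Harnack inequality \eqref{Harnack} via the midpoint splitting $G(t,s)=G(t,\tau)G(\tau,s)$. There are, however, two differences worth noting.

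First, in Stage~1 you sketch the ODE/Jensen argument that yields $\|G(t,s)\varphi_{\lambda}\|_{\infty}\le\Psi_{\lambda}(t-s)$, whereas the paper simply invokes \cite[Thm.~3.3]{Lun10Com} and \cite[Thm.~4.3]{AngLor10Com} for this bound. Your sketch is precisely the idea behind those references, and the approximation difficulty you flag at the end (that $\varphi_{\lambda}$ is not constant outside a compact set, so Proposition~\ref{deriv} does not apply directly) is indeed the only technical obstacle; it is handled in those papers by the cut-off procedure you outline. So on this point you and the paper are aligned, the paper just outsources the work.

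Second, and more interestingly, your Stage~2 is more efficient than the paper's. The paper first proves the $L^{2}\to L^{\infty}$ bound (your argument specialised to $p=2$), and then, for $p\in(1,2)$, takes a detour: it shows $\sup_{s}\|\varphi_{\lambda}\|_{1,\mu_{s}}<+\infty$, invokes Theorem~\ref{second_chara} to obtain supercontractivity, and composes the $L^{p}\to L^{2}$ and $L^{2}\to L^{\infty}$ estimates. You instead apply Harnack directly with exponent $p$, obtain $|G(\tau,s)f|^{p}\le C\varphi_{\lambda_{0}}\|f\|_{p,\mu_{s}}^{p}$, and then push $G(t,\tau)$ through using Jensen for $u\mapsto|u|^{p}$. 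This avoids the supercontractivity machinery altogether and gives the $L^{p}\to L^{\infty}$ bound in one stroke for every $p>1$.
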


\begin{proof}
We prove the claim for $p\in (1,2]$. For $p>2$, estimate \eqref{est_ultrabdd} will follow from the H\"older inequality. We split
the proof into two steps. First, we consider the case $p=2$ and, then, the case $p\in (1,2)$.

{\em Step 1.} An insight into the proof of \cite[Thm. 3.3]{Lun10Com} (see also \cite[Thm. 4.3]{AngLor10Com} for further details) shows that, under our assumptions, the function $t \mapsto (G(t,s)\varphi_\lambda)(x)$ is well defined for each $t>s$ and $x\in \Rd$, and
$G(t,s)\varphi_\lambda\in L^{\infty}(\Rd)$
for every $t>s$ and $\lambda>0$. More precisely, if for every $\delta,\lambda>0$ we set
\begin{equation}\label{mdelta}
M_{\delta,\lambda}:= \sup_{{x\in \Rd }\atop{t-s\geq \delta}}(G(t,s)\varphi_\lambda)(x),
\end{equation}
then $M_{\delta,\lambda}$ turns out to be a positive constant
 independent of $t$ and $s$.
This is enough to establish \eqref{est_ultrabdd} with $p=2$. Indeed, integrating both sides
of estimate \eqref{Harnack} (with $p=2$) with respect to $d\mu_t(y)$ and arguing as in the proof of Theorem \ref{second_chara},
we get
\begin{eqnarray*}
|(G(t,s)f)(x)| \leq 2\exp\left (\frac{R+|x|^2}{2\eta_0(t-s)}\right)\|f\|_{2,\mu_s},\qquad\;\, t>s,\;\,x\in \Rd,
\end{eqnarray*}
where $R$ is such that $\mu_t(B(0, R))>\frac{1}{4}$. Hence we obtain
\begin{align}\label{interm}
\|G(t,s)f\|_{\infty}&= \|G\left(t,(t+s)/2\right)G\left((t+s)/2,s\right)f\|_{\infty}\nnm\\
&\leq 2 e^{\frac{R}{2\eta_0(t-s)}}\|f\|_{2,\mu_s}\|G\left(t,(t+s)/2\right)\varphi_{\lambda_0}\|_{\infty},
\end{align}
for every $f \in C_b(\Rd)$ and for $\lambda_0= \frac{1}{2\eta_0(t-s)}$. Formulas \eqref{mdelta} and \eqref{interm} yield
\begin{equation}\label{smooth}
\|G(t,s)f\|_{\infty} \leq C_{2,\infty}(t-s)\|f\|_{2,\mu_s},\qquad\;\, t>s,\;\, f \in C_b(\Rd),
\end{equation}
with
\begin{eqnarray}\label{est_2_pippo}
C_{2,\infty}(t-s)=2 e^{\frac{R}{2\eta_0(t-s)}}M_{\frac{t-s}{2},\,\frac{1}{2\eta_0(t-s)}}.
\end{eqnarray}
The monotonicity of the function $r\mapsto C_{2,\infty}(r)$ is immediate consequence of the fact that
$M_{\delta_2,\lambda_1}\leq M_{\delta_1,\lambda_2}$,
for every $0<\delta_1\le \delta_2$ and $0<\lambda_1\le\lambda_2$, as it can be easily proved.

Now, let $f \in L^2(\Rd, \mu_s)$ and consider $f_n\in C_b(\Rd)$ converging to $f$ in $L^2(\Rd, \mu_s)$ as $n \to +\infty$.
Since $G(t,s)$ is a contraction from $L^2(\Rd, \mu_s)$ to $L^2(\Rd, \mu_t)$, $G(t,s)f_n$
converges to $G(t,s)f$ in $L^2(\Rd, \mu_t)$ as $n \to +\infty$.
Moreover
\begin{equation}\label{unifor}
\|G(t,s)f_n-G(t,s)f_m\|_{\infty}\leq C_{2,\infty}(t-s)\|f_n-f_m\|_{2,\mu_s},
\end{equation}
for every $t>s$, and $n, m \in \N$. Formula \eqref{unifor} yields that the sequence $G(t,s)f_n$ converges uniformly
in $\Rd$ to some function $g\in C_b(\Rd)$ and that
$g=G(t,s)f$. Then, we conclude writing \eqref{smooth} for $f_n$ and letting $n \to +\infty$.

{\em Step 2.} To prove \eqref{est_ultrabdd} when $p\in (1,2)$, we observe that
\begin{eqnarray*}
\int_{\Rd}\varphi_{\lambda,n}d\mu_s(x) =\int_{\Rd}G(s+1,s)\varphi_{\lambda,n}d\mu_{s+1}(x),
\end{eqnarray*}
for any $\lambda>0$, any $s\in I$ and any $n\in\N$, where $\varphi_{\lambda,n}=\min\{\varphi_{\lambda},n\}$.
Letting $n\to +\infty$ and using \eqref{mdelta} with $\delta=1$, we obtain
\begin{eqnarray*}
\int_{\Rd}\varphi_{\lambda}d\mu_s(x) =\int_{\Rd}G(s+1,s)\varphi_{\lambda}\,d\mu_{s+1}(x)\le M_{1,\lambda},\qquad\;\,s\in I.
\end{eqnarray*}
Hence, condition \eqref{unif-varphi-lambda} is satisfied, and Theorem \ref{second_chara} shows that
the evolution operator $G(t,s)$ is supercontractive. Therefore,
\begin{align*}
\|G(t,s)f\|_{\infty}=&\|G(t,(t+s)/2)G((t+s)/2,s)f\|_{\infty}\\
\le &\|G(t,(t+s)/2)\|_{2\to\infty}\|G((t+s)/2,s)f\|_{2,\mu_{(t+s)/2}}\\
\le & C_{2,\infty}((t-s)/2)C_{p,2}((t-s)/2)\|f\|_{p,\mu_s},
\end{align*}
for any $f\in L^p(\Rd,\mu_s)$ and any $s,t\in I$ with $s<t$.
This completes the proof.
\end{proof}

\begin{rmk}
{\rm Each function $\varphi_{\lambda}$, as in Theorem \ref{ultrathm}, satisfies Hypothesis \ref{hyp1}(iii), i.e., it is a Lyapunov function for the
nonautonomous elliptic operators ${\mathcal A}(t)$.
Indeed, since $h_{\lambda}$ is a convex function which tends to $+\infty$ as $r\to +\infty$, there exist $a_{\lambda}>0$ and $b_{\lambda}\in\R$ such that $h_{\lambda}(r)\ge a_{\lambda}r+b_{\lambda}$ for any $r\ge 0$.
From \eqref{conv} it thus follows that $(\mathcal{A}(t)\varphi_{\lambda})(x)\le -a_{\lambda}\varphi_{\lambda}(x)+b_{\lambda}$
for any $t\in I$ and any $x\in \Rd\setminus B(0,R)$. Up to replacing $b_{\lambda}$ with a larger constant, if
needed, we can assume that the previous inequality is satisfied by any $x\in\Rd$, so that \eqref{Lyapunov} is satisfied.

From \cite[Thm. 5.4]{KunLorLun09Non}, we deduce that $\sup_{s\in I}\|\varphi_{\lambda}\|_{1,\mu_s}<+\infty$ for any $\lambda>0$, and this gives an alternative proof of the first part of Step 2 in Theorem \ref{ultrathm}.
}
\end{rmk}

As a consequence of Theorem \ref{ultrathm} we now provide a sufficient condition for $G(t,s)$ to be ultrabounded.

\begin{thm}\label{ipo_ultrbdd}
Suppose that there exist three positive constants $K_2$, $\alpha>1$ and $R_0>1$
such that
\begin{eqnarray}\label{ultrbdd_ipo}
\langle b(t,x),x\rangle\le -K_2|x|^{2}(\log|x|)^{\alpha},\qquad\;\, t\in I,\;\,|x|\ge R_0.
\end{eqnarray}
Then, $G(t,s)$ is ultrabounded.
\end{thm}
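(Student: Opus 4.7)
The plan is to reduce to Theorem \ref{ultrathm} by exhibiting, for each $\lambda>0$, a convex, increasing function $h_\lambda\colon[0,+\infty)\to\R$ such that $1/h_\lambda$ is integrable near $+\infty$ and the bound $({\mathcal A}(t)\varphi_\lambda)(x)\le -h_\lambda(\varphi_\lambda(x))$ holds outside a ball. A direct computation of the first and second derivatives of $\varphi_\lambda(x)=e^{\lambda|x|^2}$ gives
\begin{align*}
({\mathcal A}(t)\varphi_\lambda)(x)=2\lambda\varphi_\lambda(x)\bigl\{{\rm Tr}(Q(t))+2\lambda\langle Q(t)x,x\rangle+\langle b(t,x),x\rangle\bigr\}.
\end{align*}
Using ${\rm Tr}(Q(t))\le d\Lambda$, $\langle Q(t)x,x\rangle\le\Lambda|x|^2$ together with assumption \eqref{ultrbdd_ipo}, for $|x|\ge R_0$ and $t\in I$ this yields
\begin{align*}
({\mathcal A}(t)\varphi_\lambda)(x)\le 2\lambda\varphi_\lambda(x)\bigl\{d\Lambda+2\lambda\Lambda|x|^2-K_2|x|^2(\log|x|)^\alpha\bigr\},
\end{align*}
and, since $\alpha>1$ makes $(\log|x|)^\alpha\to+\infty$ at infinity, there exist $R=R(\lambda)\ge R_0$ and $c_\lambda>0$ such that
\begin{align*}
({\mathcal A}(t)\varphi_\lambda)(x)\le -c_\lambda\varphi_\lambda(x)|x|^2(\log|x|)^\alpha,\qquad |x|\ge R,\ t\in I.
\end{align*}

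Next I would rewrite the right-hand side in terms of $r=\varphi_\lambda(x)$. From $r=e^{\lambda|x|^2}$ one has $|x|^2=\lambda^{-1}\log r$ and $\log|x|=\tfrac12\log(\lambda^{-1}\log r)$; hence there exist $r_\lambda>0$ and $c_\lambda'>0$ with
\begin{align*}
c_\lambda\,\varphi_\lambda(x)|x|^2(\log|x|)^\alpha\ge c_\lambda'\,r(\log r)(\log\log r)^\alpha,\qquad r\ge r_\lambda.
\end{align*}
I then take $h_\lambda\colon[0,+\infty)\to\R$ to be any convex, increasing function coinciding with $r\mapsto c_\lambda'\,r(\log r)(\log\log r)^\alpha$ for all $r\ge r_\lambda$. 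The asymptotic expression is itself convex and increasing for large $r$ (its first derivative is asymptotic to $(\log r)(\log\log r)^\alpha$ and its second derivative to $2r^{-1}(\log\log r)^\alpha$), so the extension to $[0,r_\lambda]$ can be performed by any standard smooth or piecewise-linear patch preserving monotonicity and convexity. With this choice, condition \eqref{conv} of Theorem \ref{ultrathm} is satisfied, possibly after enlarging $R(\lambda)$ so that $\varphi_\lambda(x)\ge r_\lambda$ on $\{|x|\ge R\}$.

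Finally, the integrability of $1/h_\lambda$ near $+\infty$ is obtained through the substitution $u=\log\log r$:
\begin{align*}
\int_c^{+\infty}\frac{dr}{r(\log r)(\log\log r)^\alpha}=\int_{\log\log c}^{+\infty}\frac{du}{u^\alpha}<+\infty,
\end{align*}
which is finite exactly because $\alpha>1$. An application of Theorem \ref{ultrathm} then gives the ultraboundedness of $G(t,s)$. The main obstacle is essentially the bookkeeping in the change of variables and the construction of a globally convex, increasing $h_\lambda$; the exponent $\alpha>1$ is precisely the threshold making the iterated-logarithm integral convergent, which is why hypothesis \eqref{ultrbdd_ipo} is quite sharp in this framework.
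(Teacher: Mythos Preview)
Your proof is correct and follows essentially the same route as the paper: compute $\mathcal{A}(t)\varphi_\lambda$, use \eqref{ultrbdd_ipo} and the bound on $Q$ to dominate the quadratic term by the $(\log|x|)^\alpha$ term, rewrite the resulting bound as a function of $r=\varphi_\lambda(x)$ asymptotic to $r(\log r)(\log\log r)^\alpha$, build a convex increasing $h_\lambda$ with this tail, and invoke Theorem~\ref{ultrathm}. The only cosmetic differences are that the paper absorbs the $2\lambda\Lambda|x|^2$ term via an additive constant $C_\alpha$ rather than by enlarging $R(\lambda)$, and it constructs $h_\lambda$ explicitly by freezing $g_\lambda$ at its minimum on $[0,y_{0,\lambda}]$ instead of appealing to a general convex patch.
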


\begin{proof}
A straightforward computation shows that
\begin{align*}
(\mathcal{A}(t)\varphi_{\lambda})(x)&=  2\lambda\varphi_{\lambda}(x)\left [\textrm{Tr}(Q(t))+2\lambda\langle Q(t)x,x\rangle
+\langle b(t,x),x\rangle\right ]\notag\\
&\le  -2\lambda\varphi_{\lambda}(x)\left [K_2|x|^{2}(\log|x|)^{\alpha}-2\lambda\Lambda |x|^2-\Lambda d\right ],
\end{align*}
for any $t\in I$ and any $x\in\Rd\setminus B(0,R_0)$.
Let now $C_{\alpha}$ be a positive constant such that
\begin{eqnarray*}
2\lambda\Lambda y^2\le \frac{K_2}{2}y^{2}(\log y)^\alpha+C_{\alpha},\qquad\;\,y\ge R_0.
\end{eqnarray*}
Then,
\begin{align*}
(\mathcal{A}(t)\varphi_{\lambda})(x)
\le  -\lambda\varphi_{\lambda}(x)\left [K_2|x|^{2}(\log |x|)^{\alpha}-2C_{\alpha}-2\Lambda d\right ]=-g_{\lambda}(\varphi_{\lambda}(x)),
\end{align*}
for any $t\in I$ and any $x\in\Rd\setminus B(0,R_0)$. Here,
\begin{eqnarray*}
g_{\lambda}(y)=y\left [K_2 2^{-\alpha}\log y\big(\log(\lambda^{-1}\log y))^{\alpha}-2\lambda C_{\alpha}-2\lambda\Lambda d\right ],\qquad\;\,y\ge e^{\lambda}.
\end{eqnarray*}
$g_{\lambda}$ is a convex function in the interval $[e^{\lambda},+\infty)$ and, since $g_\lambda(y)\sim y\log y(\log(\log y))^\alpha$ as $y\to+\infty$, $1/g_{\lambda}$ is integrable in a neighborhood of $+\infty$.
On the other hand, $g_{\lambda}$ is not increasing in $[e^{\lambda},+\infty)$ since $g'_{\lambda}(e^{\lambda})=-2(C_{\alpha}+\Lambda d)$.
To overcome this difficulty, let us introduce the function $h_{\lambda}=g_{\lambda}(y_{0,\lambda})\chi_{[0,y_{0,\lambda}]}+g_{\lambda}\chi_{(y_{0,\lambda},+\infty)}$, where $y_{0,\lambda}>e^{\lambda}$ is the point where the minimum of the function $g_{\lambda}$ is attained. Clearly, $h_{\lambda}$ is a convex and increasing function in $[0,+\infty)$
which equals $g_{\lambda}$ in $[y_{0,\lambda},+\infty)$. Moreover, $h_{\lambda}\le g_{\lambda}$ in $[e^{\lambda},+\infty)$, therefore
$(\mathcal{A}(t)\varphi_{\lambda})(x)\le -h_{\lambda}(\varphi_{\lambda}(x))$ for any $t\in I$ and any $|x|\ge R_0$. We can thus apply
Theorem \ref{ultrathm}.
\end{proof}

\begin{rmk}{\rm
The condition \eqref{ultrbdd_ipo} is rather sharp. Indeed in \cite{KavKerRoy93Que}, the authors consider the autonomous operator
$(\mathcal{A}\zeta)(x)=\Delta \zeta(x)-\langle \nabla\Phi(x),\nabla \zeta(x)\rangle$,
where $\Phi$ is such that $e^{-\Phi}\in L^1(\Rd)$, and prove that, if $\Phi(x)\sim |x|^2\log|x|$ as $|x|\to +\infty$, then the semigroup $T(t)$ associated to $\mathcal{A}$ in $C_b(\Rd)$ is not ultrabounded in the Lebesgue spaces with respect the invariant measure
$d\mu(x)=\|e^{-\Phi}\|^{-1}_{1}e^{-\Phi(x)}dx$.
}\end{rmk}

The Harnack type estimate
\eqref{Harnack} and the fact that $G(t,s)\varphi_\lambda \in L^\infty(\Rd)$ for every $\lambda>0$ and $t>s$ represent the
 key tools used in the proof of Theorem \ref{ultrathm} to get ultraboundedness.
Hypotheses \ref{hyp1} are enough to prove the Harnack formula \eqref{Harnack}. On the other hand, to prove
that $G(t,s)\varphi_\lambda \in L^\infty(\Rd)$ for every $\lambda>0$ and $t>s$ we have strengthened our assumptions requiring
the additional condition \eqref{conv}.
The condition $G(t,s)\varphi_\lambda \in L^\infty(\Rd)$ for every $\lambda>0$ and $t>s$ is optimal to get
ultraboundedness of $G(t,s)$ for every $t>s$.
The proof of this fact is based on the occurrence of the family of
logarithmic Sobolev inequalities \eqref{super_LSI}
and the consequent measure concentration result proved
in Proposition \ref{integrability}.

\begin{thm}
The evolution operator $G(t,s)$ is ultrabounded if and only if, for every $\lambda>0$ and $t>s$,
the function $G(t,s)\varphi_\lambda$ belongs to $L^\infty(\Rd)$ and, for any $\delta,\lambda>0$, there exists a positive
constant $K_{\delta,\lambda}$ such that
\begin{equation}
\|G(t,s)\varphi_{\lambda}\|_{\infty}\le K_{\delta,\lambda},\qquad\;\,s,t\in I,\;\, t-s\ge\delta.
\label{italia-qualificata}
\end{equation}
\end{thm}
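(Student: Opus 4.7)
The plan is to treat the two implications separately, relying on the characterization of supercontractivity provided by Theorem~\ref{second_chara} and on the bootstrap argument already carried out in Step~1 of the proof of Theorem~\ref{ultrathm}. For the necessity, first observe that ultraboundedness automatically implies supercontractivity: since $\mu_t$ is a probability measure, for every $1<p<q<+\infty$ H\"older's inequality yields $\|G(t,s)f\|_{q,\mu_t}\le \|G(t,s)f\|_\infty^{(q-p)/q}\|G(t,s)f\|_{p,\mu_t}^{p/q}\le (C_{p,\infty}(t-s))^{(q-p)/q}\|f\|_{p,\mu_s}$, and the constant on the right is a decreasing function of $t-s$ that blows up as $t-s\to 0^+$. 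Hence Theorem~\ref{second_chara} applies and gives $\sup_{s\in I}\|\varphi_\nu\|_{1,\mu_s}<+\infty$ for every $\nu>0$. Taking $\nu=2\lambda$ and using $\varphi_\lambda^2=\varphi_{2\lambda}$, we obtain $\sup_{s\in I}\|\varphi_\lambda\|_{2,\mu_s}<+\infty$. The $L^2$--$L^\infty$ boundedness of $G(t,s)$ combined with the monotonicity of $C_{2,\infty}$ then delivers $\|G(t,s)\varphi_\lambda\|_\infty\le C_{2,\infty}(\delta)\sup_{s\in I}\|\varphi_\lambda\|_{2,\mu_s}$ whenever $t-s\ge\delta$, which is \eqref{italia-qualificata}.

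For the converse direction, I would first extract supercontractivity from the hypothesis. Setting $\varphi_{\lambda,n}:=\min\{\varphi_\lambda,n\}\in C_b(\Rd)$, the invariance of $\{\mu_t\}$ and Lemma~\ref{lemm-prel} give $\int_\Rd\varphi_{\lambda,n}\,d\mu_s = \int_\Rd G(s+1,s)\varphi_{\lambda,n}\,d\mu_{s+1}\le\|G(s+1,s)\varphi_\lambda\|_\infty\le K_{1,\lambda}$, and the monotone convergence theorem yields $\sup_{s\in I}\|\varphi_\lambda\|_{1,\mu_s}\le K_{1,\lambda}$ for every $\lambda>0$. By Theorem~\ref{second_chara}, $G(t,s)$ is then supercontractive.

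It remains to reproduce Step~1 of the proof of Theorem~\ref{ultrathm}. Integrating the Harnack inequality \eqref{Harnack} with $p=2$ against $d\mu_t(y)$ and choosing, via tightness, a radius $R>0$ with $\mu_t(B(0,R))\ge 1/4$ uniformly in $t\in I$, one obtains $|(G(t,s)f)(x)|\le 2\,e^{R^2/(2\eta_0(t-s))}\varphi_{\lambda_0}(x)^{1/2}\|f\|_{2,\mu_s}$ with $\lambda_0=1/(\eta_0(t-s))$. Splitting $G(t,s)=G(t,(t+s)/2)G((t+s)/2,s)$, applying this bound to the inner factor, and invoking Lemma~\ref{lemm-prel} together with the hypothesis \eqref{italia-qualificata} (with $\delta=(t-s)/2$ and $\lambda=\lambda_0$) produces an $L^2$--$L^\infty$ estimate for $G(t,s)$ whose operator norm is a decreasing function of $t-s$ blowing up at $0$. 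The case $p\in(1,2)$ is then handled by composing this estimate on one half-interval with the supercontractive $L^p$--$L^2$ bound on the other, while $p>2$ reduces to $p=2$ via $L^p(\Rd,\mu_s)\hookrightarrow L^2(\Rd,\mu_s)$. The main technical check is the monotonicity and blow-up of the resulting $C_{p,\infty}(r)$, which are inherited from the factorized expression since both the exponential prefactor $e^{R^2/(\eta_0 r)}$ and the constant $K_{r/2,1/(\eta_0 r)}$ are monotone in $r$ (using that $K_{\delta,\lambda}$ decreases in $\delta$ and increases in $\lambda$).
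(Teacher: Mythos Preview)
Your proof is correct and follows essentially the same approach as the paper. The paper's ``only if'' direction proceeds via Proposition~\ref{super} (to obtain \eqref{fam_log_sob_prop}) and then Proposition~\ref{integrability} (to obtain uniform integrability of $\varphi_{2\lambda}$), whereas you invoke the packaged version Theorem~\ref{second_chara}; but since that theorem is proved from exactly those two propositions, the arguments coincide. For the ``if'' direction the paper simply says ``in view of the proof of Theorem~\ref{ultrathm}'', and what you wrote out---deriving $\sup_{s\in I}\|\varphi_\lambda\|_{1,\mu_s}<\infty$ from \eqref{italia-qualificata} via invariance, invoking Theorem~\ref{second_chara} for supercontractivity, and reproducing the Harnack-splitting bound---is precisely the content of Steps~1--2 there. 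One minor remark: the hypothesis only asserts the \emph{existence} of constants $K_{\delta,\lambda}$, so the monotonicity in $\delta$ and $\lambda$ you use at the end should be justified by taking $K_{\delta,\lambda}:=\sup_{t-s\ge\delta}\|G(t,s)\varphi_\lambda\|_\infty$, which is automatically monotone; this is exactly what the paper does with its $M_{\delta,\lambda}$ in \eqref{mdelta}.
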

\begin{proof}
In view of the proof of Theorem \ref{ultrathm} the ``if'' part of the statement is true.

Conversely, if $G(t,s)$ is ultrabounded, then it is bounded from $L^p(\Rd,\mu_s)$ into $L^q(\Rd,\mu_t)$ for every $t>s$ and $1< p< q<+\infty$,
and
\begin{equation*}
\|G(t,s)\|_{p\to q}\leq\|G(t,s)\|_{p\to\infty}<+\infty.
\end{equation*}
By Proposition \ref{super}, the logarithmic Sobolev inequality \eqref{fam_log_sob_prop} holds.
Consequently, from Proposition \ref{integrability} we deduce
that $\varphi_\lambda \in L^1(\Rd,\mu_s)$ for every $\lambda>0$ and $s\in I$, and
$\sup_{s\in I}\|\varphi_{\lambda}\|_{1,\mu_s}<+\infty$.
Therefore,
\begin{eqnarray*}
\|G(t,s)\varphi_{\lambda}\|_{\infty}\leq
\|G(t,s)\|_{2\to\infty}\|\varphi_\lambda\|_{2,\mu_s}=
\|G(t,s)\|_{2\to\infty}\|\varphi_{2\lambda}\|_{1,\mu_s}^{\frac{1}{2}}<+\infty,
\end{eqnarray*}
for any $t>s$ and $\lambda>0$.

Now, fix $\delta>0$ and let $t-s\ge\delta$. Since the function $r\mapsto C_{2,\infty}(r)$ is decreasing, we get
\eqref{italia-qualificata} with $K_{\delta,\lambda}=C_{2,\infty}(\delta)\sup_{s\in I}\|\varphi_{2\lambda}\|_{1,\mu_s}^{1/2}$.
\end{proof}

\section{Ultracontractivity}
\label{section_ultracontra}
In this section we assume the following additional assumption on the drift term of  the operators $\mathcal{A}(t)$.
\begin{hyp}
\label{hyp-2}
There exist three positive constants  $K_{3}$, $R$ and $\kappa >2$ such that
\begin{equation}\label{ex_1_1}
\langle b(t,x),x\rangle \leq - K_{3}|x|^{\kappa},\qquad\;\,t\in I,\;\,x\in\Rd\setminus B(0,R).
\end{equation}
\end{hyp}

\subsection{$L^1$-$L^2$ integrability}
To begin with, let us give an estimate of the asymptotic behaviour of the function $\beta$ defined in \eqref{fam_log_sob_prop} near zero.

\begin{prop}
\label{coro-5.2}
Under Hypotheses $\ref{hyp1}$ and $\ref{hyp-2}$,
$\beta(\varepsilon)=O(\varepsilon^{-\frac{\kappa}{\kappa-2}})$ as $\varepsilon\to 0^+$.
\end{prop}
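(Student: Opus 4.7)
The strategy is to retrace, through the two supercontractivity criteria of Section \ref{section_super}, how $\beta(\varepsilon)$ depends on the Gaussian moments $M_\lambda:=\sup_{s\in I}\|\varphi_\lambda\|_{1,\mu_s}$, and then to pin down the growth of $M_\lambda$ via a sharper Lyapunov function made available by the dissipativity assumption \eqref{ex_1_1}. Fixing any $1<p<q<+\infty$, the proof of ``$(i)\Rightarrow (ii)$'' in Theorem \ref{super_caract} exhibits the identity
\begin{equation*}
\beta(\varepsilon)=\frac{pq}{q-p}\log C_{p,q}\!\left(\frac{1}{2r_0}\log\Bigl(1+\frac{r_0(q-p)}{4\Lambda(q-1)}\varepsilon\Bigr)\right),
\end{equation*}
whose argument is $\sim c_0\varepsilon$ as $\varepsilon\to 0^+$, while the estimate \eqref{Cpq} from the proof of ``$(ii)\Rightarrow(i)$'' of Theorem \ref{second_chara} gives
\begin{equation*}
C_{p,q}(t-s)^q\le 2^q\exp\!\left(\frac{R^2}{2\eta_0(p-1)(t-s)}\right)M_{q/(2\eta_0(p-1)(t-s))}.
\end{equation*}
Composing the two yields, for small $\varepsilon>0$,
\begin{equation*}
\beta(\varepsilon)\le A_1\varepsilon^{-1}+A_2\log M_{A_3/\varepsilon}+A_4,
\end{equation*}
so, since $\kappa/(\kappa-2)>1$, the claim reduces to proving $\log M_\lambda=O(\lambda^{\kappa/(\kappa-2)})$ as $\lambda\to+\infty$.

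To establish this, I construct a Lyapunov function sharper than $\varphi_\lambda$. A direct computation in the spirit of Lemma \ref{Lya_prop}, using \eqref{ex_1_1} and \eqref{ell}, shows that for every $c\in(0,K_3/(\kappa\Lambda))$ any positive $W_c\in C^2(\Rd)$ agreeing with $x\mapsto e^{c|x|^\kappa}$ outside a sufficiently large ball satisfies
\begin{equation*}
(\mathcal{A}(t)W_c)(x)\le c\kappa\bigl[(d+\kappa-2)\Lambda|x|^{\kappa-2}-(K_3-c\kappa\Lambda)|x|^{2\kappa-2}\bigr]W_c(x)
\end{equation*}
on that region, with the bracket tending to $-\infty$ at infinity. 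Therefore $\mathcal{A}(t)W_c\le a_c-\gamma_c W_c$ on the whole of $\Rd$ for suitable positive constants $a_c,\gamma_c$, and \cite[Thm.~5.4]{KunLorLun09Non} then supplies $K_c:=\sup_{s\in I}\int_{\Rd}e^{c|x|^\kappa}d\mu_s<+\infty$.

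With this $\kappa$-exponential tail estimate, I bound $M_\lambda$ by splitting at the radius $r_\lambda:=(2\lambda/c)^{1/(\kappa-2)}$, chosen so that $\lambda|x|^2\le \tfrac{c}{2}|x|^\kappa$ as soon as $|x|\ge r_\lambda$. Since $\varphi_\lambda\le e^{\lambda r_\lambda^2}$ on $\{|x|\le r_\lambda\}$ and $\varphi_\lambda\le e^{c|x|^\kappa}$ on the complement, using the identity $1+2/(\kappa-2)=\kappa/(\kappa-2)$ we obtain
\begin{equation*}
M_\lambda\le \exp\bigl((2/c)^{2/(\kappa-2)}\lambda^{\kappa/(\kappa-2)}\bigr)+K_c,
\end{equation*}
which provides the required rate and closes the argument. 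The computations are each elementary and no single step is technically hard; the delicate point is merely book-keeping, namely making sure that the exponent $\kappa/(\kappa-2)$ indeed survives the chain of estimates and is not spoiled by the Harnack-type prefactor $e^{R^2/(2\eta_0(p-1)(t-s))}$. The latter contributes only an $O(\varepsilon^{-1})$ term, which is subdominant because $\kappa/(\kappa-2)>1$.
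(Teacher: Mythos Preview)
Your proof is correct and follows essentially the same route as the paper: both arguments build the Lyapunov function $e^{c|x|^\kappa}$ with $c<K_3/(\kappa\Lambda)$, invoke \cite[Thm.~5.4]{KunLorLun09Non} to get $\sup_{s}\int e^{c|x|^\kappa}d\mu_s<+\infty$, deduce $\log M_\lambda=O(\lambda^{\kappa/(\kappa-2)})$ (the paper via the global inequality $\delta t^\kappa-\lambda t^2\ge -c_1\lambda^{\kappa/(\kappa-2)}$, you via the equivalent radial splitting), and then feed this into the chain \eqref{Cpq}--\eqref{super_LSI} to read off the exponent of $\beta$. The only cosmetic differences are that the paper fixes $p=2$, $q=3$ and quotes Proposition~\ref{super} directly rather than the formula for $\beta(\varepsilon)$ extracted from the proof of Theorem~\ref{super_caract}.
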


\begin{proof}
First of all, let us prove that the function $x\mapsto \varphi_{\delta,\kappa}(x)=e^{\delta |x|^{\kappa}}$ belongs to $L^1(\Rd,\mu_s)$ for any $s\in I$ and any $\delta<K_3/(\kappa\Lambda)$ (see
\eqref{ell}) and that there exists a positive constant $M$, independent of $s$, such that
$\|\varphi_{\delta,\kappa}\|_{1,\mu_s}\le M$ for any $s\in I$.
For this purpose, in view of \cite[Thm. 5.4]{KunLorLun09Non} we can limit ourselves to proving that $(\mathcal{A}(t)\varphi_{\delta,\kappa})(x)\le a_1-\gamma_1\varphi_{\delta,\kappa}(x)$ for any $t\in I$, $x\in\Rd$ and
some positive constants $a_1$ and $\gamma_1$.
It is easy to compute and to estimate $\mathcal{A}(t)\varphi_{\delta,\kappa}$
in the following way:
\begin{align*}
(\mathcal{A}(t)\varphi_{\delta,\kappa})(x)=&  \delta \kappa \varphi_{\delta,\kappa}(x) \big[ (\delta \kappa |x|^{2\kappa -4} + (\kappa-2) |x|^{\kappa -4} )\langle Q(t)x,x\rangle\\
&\qquad\qquad\;\; + \textrm{Tr}(Q(t))|x|^{\kappa - 2}  + \langle b(t,x),x\rangle |x|^{\kappa - 2}\big]\\
\leq &  \delta \kappa \varphi_{\delta,\kappa}(x) [\delta \kappa \Lambda |x|^{2\kappa -2} + \Lambda (d+  \kappa -2) |x|^{\kappa -2} -  K_{3}|x|^{2\kappa -2}]\\
=: & g_1(x) \varphi_{\delta,\kappa}(x),
\end{align*}
for any $(t,x)\in I\times\Rd$, where $g_1(x)$ tends to $-\infty$ as $|x|\to +\infty$. Hence, the claim follows at once.

We now observe that, for any $\lambda>0$ and any $t\ge 0$, we have
\begin{eqnarray*}
\delta t^{\kappa}-\lambda t^2\ge\left (\frac{2}{\kappa\delta}\right )^{\frac{2}{\kappa-2}}\frac{2-\kappa}{\kappa}\lambda^{\frac{\kappa}{\kappa-2}}=:-c_1\lambda^{\frac{\kappa}{\kappa-2}}.
\end{eqnarray*}
It thus follows that
\begin{equation}
\|\varphi_{\lambda}\|_{1,\mu_s}\le e^{c_1\lambda ^{\frac{\kappa}{\kappa-2}}}\|\varphi_{\delta,\kappa}\|_{1,\mu_s}\le e^{c_1\lambda ^{\frac{\kappa}{\kappa-2}}}\sup_{r\in I}\|\varphi_{\delta,\kappa}\|_{1,\mu_r}=:
c_2e^{c_1\lambda ^{\frac{\kappa}{\kappa-2}}}.
\label{cond-int-c1-c2}
\end{equation}

Writing \eqref{super_LSI} with $p=2$ and $q=3$,  we get
\begin{align*}
\int_{\Rd}f^2 \log \left(\frac{|f|}{\|f\|_{2,\mu_s}} \right)d\mu_s(x) \leq &(1-e^{2r_0(t-s)})\frac{8\Lambda }{|r_0|}\|\,|\nabla f|\,\|_{2,\mu_s}^2\nnm\\
&+ 3\log(\tilde C_{2,3}(t,s))\|f\|_{2,\mu_s}^2,
\end{align*}
for any $I\ni s<t$. Let us provide an estimate of the constant $\tilde C_{2,3}(t,s)$, which represents the norm of $G(t,s)$ from
$L^2(\Rd,\mu_s)$ to $L^3(\Rd,\mu_t)$.
From \eqref{Cpq}, with $p=2$ and $q=3$, we get
\begin{eqnarray*}
\tilde C_{2,3}(t,s)\le C_{2,3}(t-s)= 2\exp\left (\frac{R^2}{6\eta_0(t-s)}\right )\|\varphi_{\lambda_0}\|_{1,\mu_s}^{\frac{1}{3}},
\end{eqnarray*}
where
$\lambda_0=3(2\eta_0(t-s))^{-1}$. From estimate \eqref{cond-int-c1-c2} we thus conclude that
\begin{eqnarray*}
C_{2,3}(t-s)\le c_3\exp\left (\frac{R^2}{6\eta_0(t-s)}\right )\exp\left (\frac{c_4}{(t-s)^{\frac{\kappa}{\kappa-2}}}\right ),
\end{eqnarray*}
for some positive constants $c_3$ and $c_4$, so that
\begin{eqnarray*}
\log(C_{2,3}(t-s))\le \log(c_3)+c_4(t-s)^{-\frac{\kappa}{\kappa-2}}+c_5(t-s)^{-1}.
\end{eqnarray*}
Now, we fix $\varepsilon<8\Lambda|r_0|^{-1}$ and solve the equation $8\Lambda |r_0|^{-1}(1-e^{2r_0(t-s)})=\varepsilon$. We get
\begin{eqnarray*}
t-s=\frac{1}{2r_0}\log\left (1+\frac{r_0}{8\Lambda}\varepsilon\right ).
\end{eqnarray*}
Hence, for $\varepsilon<8\Lambda|r_0|^{-1}$, we obtain
\begin{eqnarray*}
\beta(\varepsilon)\le 3\left\{\log(c_3)+c_4\left [\frac{1}{2r_0}\log\left (1+\frac{r_0}{8\Lambda}\varepsilon\right )\right ]^{-\frac{\kappa}{\kappa-2}}
+c_5\left [\frac{1}{2r_0}\log\left (1+\frac{r_0}{8\Lambda}\varepsilon\right )\right ]^{-1}\right\},
\end{eqnarray*}
and the assertion follows at once.
\end{proof}

We can now prove the boundedness of $G(t,s)$ from $L^1(\Rd,\mu_s)$ into $L^2(\Rd,\mu_t)$ following the basic ideas in the proof of \cite[Thm. 3.4]{maheux} for the autonomous case. 
We stress that the nonautonomous setting gives rise to some additional technical difficulties.

\begin{thm}\label{teo_12}
Under Hypotheses $\ref{hyp1}$ and $\ref{hyp-2}$, for any $s,t\in I$, with $s<t$, the operator $G(t,s)$ is bounded from $L^1(\Rd,\mu_s)$ into $L^2(\Rd,\mu_t)$.
\end{thm}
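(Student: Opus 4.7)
The plan is to extend the time-dependent Gross-type interpolation used in the proof of $(ii)\Rightarrow(i)$ of Theorem \ref{super_caract} so as to allow the starting exponent to be $p(s)=1$ rather than some $p(s)>1$. The decisive new ingredient is the quantitative control $\beta(\varepsilon)=O(\varepsilon^{-\kappa/(\kappa-2)})$ as $\varepsilon\to 0^+$ provided by Proposition \ref{coro-5.2}. Fix $f\in C_c^\infty(\Rd)$, $f\ge 0$ and not identically zero, and a smooth strictly increasing function $p\colon[s,t]\to[1,2]$ with $p(s)=1$ and $p(t)=2$, whose precise form will be specified below. Setting $u(r,\cdot)=G(r,s)f$, $M_n(r)=\int_{\Rd}\theta_n u^{p(r)}d\mu_r$ (with $\theta_n$ as in \eqref{thetan}) and $H_n=M_n^{1/p(r)}$, Proposition \ref{lemma-3.1} makes $\log H_n$ differentiable on $(s,t)$. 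Passing to the limit $n\to+\infty$ (monotone convergence; \eqref{Artheta} to discard the cut-off remainder; positivity of $u(r,\cdot)$ for $r>s$ from Lemma \ref{lemm-prel}) and using \eqref{ell} to bound $|Q^{1/2}\nabla u|^2\ge\eta_0|\nabla u|^2$ gives, for $s<r<t$,
\begin{equation*}
\frac{d}{dr}\log H(r)\le \frac{p'(r)}{p(r)^2}\bigg[\frac{1}{M}\int_{\Rd}u^{p}\log u^{p}\,d\mu_r-\log M\bigg]-\eta_0\frac{p(r)-1}{M}\int_{\Rd}u^{p-2}|\nabla u|^2\,d\mu_r,
\end{equation*}
where $H=\lim_n H_n$ and $M=H^{p(r)}$.

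Applying \eqref{fam_log_sob_prop} to $u^{p/2}$ (for which $\|u^{p/2}\|_{2,\mu_r}^2=M$) yields
\begin{equation*}
M^{-1}\int_{\Rd} u^p\log u^p\,d\mu_r-\log M\le \frac{p^2\varepsilon}{2M}\int_{\Rd}u^{p-2}|\nabla u|^2\,d\mu_r+2\beta(\varepsilon).
\end{equation*}
Substituting and choosing $\varepsilon(r)=2\eta_0(p(r)-1)/p'(r)$ precisely cancels the coefficient of the gradient term, leaving
\begin{equation*}
\frac{d}{dr}\log H(r)\le \frac{2p'(r)\beta(\varepsilon(r))}{p(r)^2}.
\end{equation*}

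Let $\alpha:=\kappa/(\kappa-2)>1$ and set $p(r)-1=\big((r-s)/(t-s)\big)^\gamma$ with $\gamma>\alpha$. By Proposition \ref{coro-5.2}, $\beta(\varepsilon)\le C\varepsilon^{-\alpha}$ for $\varepsilon>0$ small, so a direct computation gives
\begin{equation*}
\frac{2p'(r)\beta(\varepsilon(r))}{p(r)^2}\le K_1\,\frac{(p'(r))^{1+\alpha}}{(p(r)-1)^\alpha}=K_2\,(r-s)^{\gamma-1-\alpha}\,(t-s)^{-\gamma};
\end{equation*}
the exponent $\gamma-1-\alpha>-1$ makes this integrable on $[s,t]$ and produces $\int_s^t 2p'\beta(\varepsilon)p^{-2}\,dr\le K(t-s)^{-\alpha}$. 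Integrating and using $H(s)=\|f\|_{1,\mu_s}$, $H(t)=\|G(t,s)f\|_{2,\mu_t}$ gives
\begin{equation*}
\|G(t,s)f\|_{2,\mu_t}\le \exp\!\big(K(t-s)^{-\alpha}\big)\,\|f\|_{1,\mu_s}.
\end{equation*}
Density of $C_c^\infty(\Rd)$ in $L^1(\Rd,\mu_s)$ together with the $L^1$-contractivity of $G(t,s)$ extends this to all $f\in L^1(\Rd,\mu_s)$.

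The main obstacle is the divergence $\beta(\varepsilon(r))\to+\infty$ as $r\downarrow s$: the gradient-cancelling choice forces $\varepsilon(r)\to 0^+$, and the natural linear schedule $p(r)-1\propto(r-s)$ would produce a non-integrable singularity $(r-s)^{-\alpha}$ since $\alpha>1$. This is why both the polynomial sharpening $\gamma>\alpha$ and the quantitative decay of $\beta$ in Proposition \ref{coro-5.2} (which in turn relies on the strong dissipativity \eqref{ex_1_1} with $\kappa>2$) are essential. A secondary technical point is that the calculation must first be carried out on $[s+\delta,t]$ to apply Proposition \ref{lemma-3.1}, and then $\delta\to 0^+$ is handled using $H_n(s)=\int_{\Rd}\theta_n f\,d\mu_s\to\|f\|_{1,\mu_s}$ by monotone convergence.
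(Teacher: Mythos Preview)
Your argument is correct and produces the same bound $\|G(t,s)\|_{1\to 2}\le\exp\big(C(t-s)^{-\kappa/(\kappa-2)}\big)$, but it proceeds by a genuinely different route from the paper. You extend the Gross interpolation scheme of Theorem~\ref{super_caract}$(ii)\!\Rightarrow\!(i)$ to a moving exponent $p(r)$ that starts at $p(s)=1$; the new idea is the polynomial profile $p(r)-1=\big((r-s)/(t-s)\big)^\gamma$ with $\gamma>\alpha=\kappa/(\kappa-2)$, so that after the gradient--cancelling choice $\varepsilon(r)=2\eta_0(p-1)/p'$ the remaining term $p'(r)\beta(\varepsilon(r))\sim(r-s)^{\gamma-1-\alpha}$ is integrable near $r=s$. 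The paper instead keeps the exponent fixed at $2$ throughout (following \cite{maheux}): it differentiates $\zeta_n(r)=\log\|\theta_nG(r,s)f\|_{2,\mu_r}^2$, combines LSI$_\varepsilon$ with the elementary Jensen inequality
\[
2\|g\|_{2,\mu_r}^2\log\|g\|_{2,\mu_r}-\|g\|_{2,\mu_r}^2\log\|g\|_{1,\mu_r}\le\int_{\Rd}g^2\log g\,d\mu_r
\]
(applied to $g=\theta_nG(r,s)f$) to obtain the linear differential inequality $\zeta_n'\le-\eta_0\varepsilon^{-1}\zeta_n+2\eta_0\varepsilon^{-1}\beta(\varepsilon)+o_n(1)$, then chooses $\varepsilon=\eta_0(r-s)/(m+1)$ with $m>2/(\kappa-2)$, multiplies by $(r-s)^{m+1}$, and integrates by parts. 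Your method stays closer to the hypercontractivity proof and dispenses with the Jensen step; the paper's method avoids the moving exponent and hence the delicacy near $p=1$, where $u^{p-2}|\nabla u|^2$ is potentially singular and one must genuinely work on $[s+\delta,t]$ with the cut--off before letting $\delta\to0^+$. Both arguments rest on the same quantitative input $\beta(\varepsilon)=O(\varepsilon^{-\alpha})$ from Proposition~\ref{coro-5.2}.
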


\begin{proof}

As a first step we observe that, for any $s\in I$ and any nonnegative $g\in C_b(\Rd)$,
\begin{equation}
2\|g\|_{2,\mu_s}^2\log\|g\|_{2,\mu_s}-\|g\|_{2,\mu_s}^2\log\|g\|_{1,\mu_s}\le\int_{\Rd}g^2\log g\, d\mu_s(x).
\label{LS-2}
\end{equation}
It suffices to prove \eqref{LS-2} for functions with $\|g\|_{1,\mu_s}=1$, which reduces to
\begin{equation}
2\|g\|_{2,\mu_s}^2\log\|g\|_{2,\mu_s}\le\int_{\Rd}g^2\log g\, d\mu_s(x),
\label{LS-1}
\end{equation}
since \eqref{LS-2} in the general case will follow from applying \eqref{LS-1} to the function $\|g\|_{1,\mu_s}^{-1}g$.

To prove estimate \eqref{LS-1} we observe that the measure $d\nu_s(x)=gd\mu_s(x) $ is a probability measure and the function
$\psi(x)=x\log x$ is convex in $(0,+\infty)$. Therefore, Jensen inequality yields
\begin{align*}
\psi\left (\int_{\Rd}g d\nu_s(x)\right )\le\int_{\Rd}\psi(g)d\nu_s(x),
\end{align*}
which is \eqref{LS-1}.

We now fix a positive function $f\in C_c^{\infty}(\Rd)$, with $\|f\|_{1,\mu_s}=1$. Applying the logarithmic Sobolev inequality
\eqref{fam_log_sob_prop} with $f$ and $\mu_s$ being replaced respectively by $\theta_nG(r,s)f$ and $\mu_r$
(where $\theta_n$ is defined in \eqref{thetan}) and taking \eqref{LS-2} (with $g=\theta_nG(r,s)f$) into account,
we obtain
\begin{align}
\|\theta_n G(r,s)f\|_{2,\mu_r}^2\log\left (\frac{\|\theta_n G(r,s)f\|_{2,\mu_r}}{\|\theta_n G(r,s)f\|_{1,\mu_r}}\right )
\le & \varepsilon \|\,|\nabla_x (\theta_n G(r,s)f)|\,\|_{2,\mu_r}^2\nnm\\
&+\beta(\varepsilon)\|\theta_n G(r,s)f\|_{2,\mu_r}^2,
\label{pippo}
\end{align}
for every $s,r\in I$ with $s\le r$.
Since $\{\mu_r:\,r\in I\}$ is a tight evolution system of measures, we can fix $R\in\N$ such that $\mu_r(B(0,R))\ge 1/2$ for every $r\in I$.
Now, let us fix $t>s$ and set
\begin{eqnarray*}
\zeta_n(r):=\log(\|\theta_nG(r,s)f\|_{2,\mu_r}^2), \qquad\;\, n\ge R,\; \,r\in [s,t].
\end{eqnarray*}
Note that the function $\zeta_n$ is well defined since $G(r,s)f$ is a smooth and everywhere positive function in $\Rd$ for any $r\ge s$, (see Lemma \ref{lemm-prel}). Hence,
$\|\theta_n G(r,s)f\|_{2,\mu_r}\ge \delta/\sqrt{2}$ for any $r\in [s,t]$ and $n\ge R$, where $\delta$ denotes the minimum of the function $G(\cdot,s)f$ in
$[s,t]\times B(0,R)$. From Proposition \ref{lemma-3.1} we deduce that the function $\zeta_n$ is differentiable in $[s,t]$ and
\begin{align*}
\|\theta_nG(r,s)f\|_{2,\mu_r}^{2}\zeta_n'(r)=&\,2\int_{\Rd}\theta_n^2(G(r,s)f)\mathcal{A}(r)G(r,s)fd\mu_r(x) \\
&-\int_{\Rd}{\mathcal A}(r)[\theta_n^2(G(r,s)f)^2]d\mu_r(x) \\
=& -2\int_{\Rd}\langle Q(r)\nabla_x(\theta_n G(r,s)f),\nabla_x(\theta_n G(r,s)f)\rangle d\mu_r(x) \\
&-4\int_{\Rd}\theta_n(G(r,s)f)\langle Q(r)\nabla \theta_n,\nabla_xG(r,s)f\rangle d\mu_r(x) \\
&-2\int_{\Rd}\theta_n(G(r,s)f)^2\mathcal{A}(r)\theta_nd\mu_r(x) .
\end{align*}
Using \eqref{ex_1_1} we can estimate
\begin{eqnarray*}
-\mathcal{A}(r)\theta_n\le \frac{d\Lambda}{n^2}(2\|\eta'\|_{\infty}+\|\eta''\|_{\infty})-\eta'\left(\frac{|x|}{n}\right)\frac{\langle b(r,x),x\rangle}{n|x|}
\le \frac{d\Lambda}{n^2}(2\|\eta'\|_{\infty}+\|\eta''\|_{\infty}).
\end{eqnarray*}
It thus follows that
\begin{align*}
\|\theta_nG(r,s)f\|_{2,\mu_r}^{2}\zeta_n'(r)
\le & -2\eta_0\|\,|\nabla_x(\theta_n G(r,s)f)|\,\|_{2,\mu_r}^2+\frac{4\Lambda}{n}\|\eta'\|_{\infty}\|f\|_{\infty}\|\,|\nabla f|\,\|_{\infty}\\
&+\frac{2d\Lambda}{n^2} (2\|\eta'\|_{\infty}+\|\eta''\|_{\infty})\|f\|^2_{\infty},
\end{align*}
for any $r\ge s$ and $n\in\N$.
Hence, from \eqref{pippo} and observing that $\|\theta_nG(r,s)f\|_{1,\mu_r}\le \|G(r,s)f\|_{1,\mu_r}\le\|f\|_{1,\mu_s}\le 1$,  we deduce that
\begin{align}
\zeta_n'(r)\leq & -\frac{\eta_0}{\varepsilon}\zeta_n(r)+\frac{2\eta_0\beta(\varepsilon)}{\varepsilon}+\frac{2C(n)}{\delta^2},\qquad\;\, r\in [s,t],
\label{zetaaa}
\end{align}
where
\begin{align*}
C(n)=\frac{4\Lambda}{n}\|\eta'\|_{\infty}\|f\|_{\infty}\|\,|\nabla f|\,\|_{\infty}+\frac{2d\Lambda}{n^2}(2\|\eta'\|_{\infty}+\|\eta''\|_{\infty})\|f\|_{\infty}^2.
\end{align*}
Fix  $m>2/(\kappa-2)$ and take $\varepsilon=\eta_0 (r-s)/(m+1)$ in the previous inequality. Multiplying both of the sides of \eqref{zetaaa} by $(r-s)^{m+1}$ and integrating between $s$ and $t$ we get
\begin{align*}
\int_s^t(r-s)^m\zeta_n(r)dr\le &-\frac{1}{m+1}\int_s^t(r-s)^{m+1}\zeta'_n(r)dr\\
&+2\int_s^t(r-s)^m\beta\left (\frac{\eta_0(r-s)}{m+1}\right )dr+\frac{2C(n)(t-s)^{m+2}}{\delta^2(m+1)(m+2)}.
\end{align*}
Note that the last integral term in the right-hand side of the previous inequality is finite due to
Proposition \ref{coro-5.2}. An integration by parts shows that
\begin{eqnarray*}
\frac{1}{m+1}\int_s^t(r-s)^{m+1}\zeta_n'(r)dr=
\frac{1}{m+1}(t-s)^{m+1}\zeta_n(t)-\int_s^t (r-s)^m\zeta_n(r)dr.
\end{eqnarray*}
Hence,
\begin{align*}
(t-s)^{m+1}\zeta_n(t)\le & 2(m+1)\int_s^t(r-s)^m\beta\left (\frac{\eta_0(r-s)}{m+1}\right )dr+\frac{2C(n)}{\delta^2(m+2)}(t-s)^{m+2}\\
\le &\frac{2(m+1)^{m+2}}{\eta_0^{m+1}}\int_0^{\frac{\eta_0(t-s)}{m+1}}\sigma^m\beta(\sigma)d\sigma+ \frac{2C(n)}{\delta^2}(t-s)^{m+1}.
\end{align*}
Then, letting $n \to +\infty$ it follows that
\begin{align*}
(t-s)^{m+1}\log(\|G(t,s)f\|_{2,\mu_t}^2)&\le\frac{2(m+1)^{m+2}}{\eta_0^{m+1}}\int_0^{\frac{\eta_0(t-s)}{m+1}}\sigma^m\beta(\sigma)d\sigma\nnm\\
&\leq C(\kappa,\eta_0)(t-s)^{m+1-\frac{\kappa}{\kappa-2}},
\end{align*}
for some positive constant $C(\kappa,\eta_0)$. Thus we get
\begin{equation}\label{formula}
\|G(t,s)f\|_{2,\mu_t}\le e^{\frac{C(\kappa,\eta_0)}{2(t-s)^{\kappa/(\kappa-2)}}}= e^{\frac{C(\kappa,\eta_0)}{2(t-s)^{\kappa/(\kappa-2)}}}\|f\|_{1,\mu_s}.
\end{equation}
By homogeneity, we can extend \eqref{formula} to any positive and smooth function $f$ with $\|f\|_{1,\mu_s}\neq 1$.
Next, for a general $f\in C^{\infty}_c(\Rd)$, we write \eqref{formula} for $f_n=(f^2+n^{-1})^{1/2}$.
Observing that $\|G(t,s)f_n\|_{2,\mu_t}$ converges to $\|G(t,s)|f|\|_{2,\mu_t}$ as $n \to +\infty$,
for every $t\ge s$, and recalling that $|G(t,s)f|\leq G(t,s)|f|$, we get \eqref{formula}, letting $n\to +\infty$.

Finally, by density we can extend \eqref{formula} to any $f\in L^1(\Rd,\mu_s)$ and complete the proof.
\end{proof}

As a consequence of Theorems \ref{ipo_ultrbdd} and \ref{teo_12} we get
the announced ultracontractivity property of $G(t,s)$.

\begin{thm}
Under Hypotheses $\ref{hyp1}$ and $\ref{hyp-2}$ the evolution operator $G(t,s)$ is ultracontractive.
\end{thm}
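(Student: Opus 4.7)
The plan is to stitch together Theorem \ref{ipo_ultrbdd} and Theorem \ref{teo_12} via the evolution property $G(t,s)=G(t,r)G(r,s)$, choosing $r$ at the midpoint $(s+t)/2$.

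First I would check that Hypothesis \ref{hyp-2} implies the assumption \eqref{ultrbdd_ipo} of Theorem \ref{ipo_ultrbdd}. Indeed, for any fixed $\alpha>1$ and any $K_2>0$, the bound $K_3|x|^\kappa \ge K_2 |x|^2(\log|x|)^\alpha$ holds for $|x|$ large enough since $\kappa>2$. Therefore $G(t,s)$ is ultrabounded, and in particular, for every $p\in(1,+\infty)$,
\begin{equation*}
\|G(t,s)f\|_\infty \le C_{p,\infty}(t-s)\|f\|_{p,\mu_s}, \qquad I\ni s<t,\; f\in L^p(\Rd,\mu_s),
\end{equation*}
with $C_{p,\infty}:(0,+\infty)\to(0,+\infty)$ decreasing and blowing up at $0^+$. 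This already yields the ultracontractivity estimate for every $p>1$.

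It remains to treat $p=1$. For $s<t$, set $r=(s+t)/2$ and write
\begin{equation*}
G(t,s)f = G(t,r)G(r,s)f.
\end{equation*}
By Theorem \ref{teo_12} (applied between $s$ and $r$), and invoking estimate \eqref{formula}, we have
\begin{equation*}
\|G(r,s)f\|_{2,\mu_r} \le \exp\!\left(\frac{C(\kappa,\eta_0)}{2\,(r-s)^{\kappa/(\kappa-2)}}\right)\|f\|_{1,\mu_s}
= \exp\!\left(\frac{2^{\kappa/(\kappa-2)-1}C(\kappa,\eta_0)}{(t-s)^{\kappa/(\kappa-2)}}\right)\|f\|_{1,\mu_s}.
\end{equation*}
By ultraboundedness for $p=2$,
\begin{equation*}
\|G(t,r)G(r,s)f\|_\infty \le C_{2,\infty}\!\left(\tfrac{t-s}{2}\right)\|G(r,s)f\|_{2,\mu_r}.
\end{equation*}
Combining these inequalities gives
\begin{equation*}
\|G(t,s)f\|_\infty \le C_{1,\infty}(t-s)\|f\|_{1,\mu_s},
\end{equation*}
where
\begin{equation*}
C_{1,\infty}(\tau) := C_{2,\infty}(\tau/2)\exp\!\left(\frac{2^{\kappa/(\kappa-2)-1}C(\kappa,\eta_0)}{\tau^{\kappa/(\kappa-2)}}\right),\qquad \tau>0.
\end{equation*}
Since $C_{2,\infty}$ is decreasing and the exponential factor is a decreasing positive function of $\tau$, $C_{1,\infty}$ is decreasing and blows up as $\tau\to 0^+$. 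The density of $C_b(\Rd)$ (or $C_c^\infty(\Rd)$) in $L^1(\Rd,\mu_s)$ allows to extend the estimate to arbitrary $f\in L^1(\Rd,\mu_s)$, completing the verification of Definition \ref{definitions}(iii).

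There is no real obstacle: the proof is a bookkeeping argument reducing ultracontractivity to the previously established ultraboundedness plus the $L^1$--$L^2$ smoothing, and the only point worth checking carefully is the monotonicity of the resulting function $C_{1,\infty}$ and the fact that Hypothesis \ref{hyp-2} is strong enough to trigger Theorem \ref{ipo_ultrbdd}.
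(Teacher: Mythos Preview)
Your proof is correct and follows essentially the same approach as the paper: reduce to $p=1$ and use the evolution identity $G(t,s)=G(t,(t+s)/2)G((t+s)/2,s)$ together with the $L^1\!\to\! L^2$ bound of Theorem~\ref{teo_12} and the $L^2\!\to\! L^\infty$ bound coming from ultraboundedness (Theorem~\ref{ipo_ultrbdd}). The paper's own proof is more terse---it just records the inequality $\|G(t,s)\|_{1\to\infty}\le \|G(t,(t+s)/2)\|_{2\to\infty}\|G((t+s)/2,s)\|_{1\to 2}$ and handles $p>1$ by H\"older rather than by invoking ultraboundedness directly---but your added verification that Hypothesis~\ref{hyp-2} implies condition~\eqref{ultrbdd_ipo} and your check of the monotonicity of $C_{1,\infty}$ are welcome details.
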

\begin{proof}
It suffices to prove the claim for $p=1$. For $p>1$ the statement follows from the H\"older inequality.

To conclude the proof, observe that, for every $t>s$,
\begin{equation}
\|G(t,s)\|_{1\to \infty}\leq \|G(t,(t+s)/2)\|_{2\to \infty}\|G((t+s)/2,s)\|_{1 \to 2}.
\label{nascosta}
\end{equation}
\end{proof}

\subsection{Nonautonomous elliptic operators with non-zero potential term}

All the regularizing properties in the previous sections can be extended to nonautonomous operators
with non zero potential term, i.e., operators defined on smooth functions $\zeta$ by
\begin{align*}
({\mathcal A}_c(t)\zeta)(x)=&\sum_{i,j=1}^Nq_{ij}(t)D_{ij}\zeta(x)+\sum_{j=1}^Nb_j(t,x)D_j\zeta(x)-c(t,x)\zeta(x)\\
=&({\mathcal A}(t)\zeta)(x)-c(t,x)\zeta(x),
\end{align*}
for any $t\in I$ and $x\in\Rd$. Besides Hypotheses \ref{hyp1} we assume the following condition.
\begin{hyp}\label{hypc}
$c \in C^{\alpha/2,\alpha}_{\rm loc}(I\times \R^d)$ and $
c_0:=\inf_{I\times\R^d}c>-\infty$.
\end{hyp}

Let $\varphi$, $a$ and $\gamma$ be the function and the constants in Hypothesis \ref{hyp1}(iii). Then,
\begin{eqnarray*}
\mathcal{A}_c(t)\varphi=\mathcal{A}(t)\varphi-c\varphi\le a-(\gamma+c_0)\varphi,\qquad\;\,t\in I.
\end{eqnarray*}
Hence, we can determine a positive constant $\lambda$ such that
${\mathcal A}_c(t)\varphi-\lambda\varphi\le 0$ for any $t\in I$.
We can thus apply the results in \cite{AngLor10Com} which show that a Markov evolution operator $G_c(t,s)$
can be associated to the operator $\mathcal{A}_c(t)$. More precisely, for every $f\in C_b(\Rd)$ and $s \in I$,
$G_c(\cdot,s)f\in C([s,+\infty)\times \Rd)\cap C^{1+\alpha/2,2+\alpha}_{\rm loc}((s,+\infty)\times \R^d)$
is the unique solution of the Cauchy problem
\begin{eqnarray*}
\left\{
\begin{array}{ll}
D_tu(t,x)={\mathcal{A}}_c(t)u(t,x),\quad\quad & (t,x)\in (s,+\infty)\times \Rd,\\[1mm]
u(s,x)= f(x),\quad\quad & x\in \Rd,
\end{array}\right.
\end{eqnarray*}
which satisfies $\|u(t,\cdot)\|_{\infty}\le e^{-c_0(t-s)}\|f\|_{\infty}$.
In the next theorem we will show that $G_c(t,s)$ is
ultracontractive, i.e., it maps $L^p(\Rd, \mu_s)$ into $C_b(\Rd)$ for every $p\ge 1$, where $\{\mu_t: t\in I\}$ the unique tight evolution system of measures for the evolution
operator $G(t,s)$, considered in the previous sections.

\begin{thm}
Assume that Hypotheses $\ref{hyp1}$ and $\ref{hypc}$ hold. If $G(t,s)$ is supercontractive $($resp. ultrabounded, resp. ultracontractive$)$, then
$G_c(t,s)$ is supercontractive $($resp. ultrabounded, resp. ultracontractive$)$ and
\begin{eqnarray*}
\|G_c(t,s)\|_{p\to q}\le C_{p,q}(t-s)e^{-c_0(t-s)},
\end{eqnarray*}
for any $t>s$, $1<p<q<+\infty$ $($resp. $1<p<q=+\infty$, resp. $1\le p<q=+\infty)$, where $C_{p,q}$ is given in
Definition $\ref{definitions}$.
\end{thm}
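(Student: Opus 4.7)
The plan is to reduce the theorem to the pointwise comparison estimate
\[
|(G_c(t,s)f)(x)|\le e^{-c_0(t-s)}(G(t,s)|f|)(x),\qquad t>s,\;x\in\Rd,\;f\in C_b(\Rd).
\]
Call this estimate $(\star)$. Granted $(\star)$, raising to the $q$-th power and integrating with respect to $d\mu_t$ gives
\[
\|G_c(t,s)f\|_{q,\mu_t}\le e^{-c_0(t-s)}\|G(t,s)|f|\,\|_{q,\mu_t}\le e^{-c_0(t-s)}C_{p,q}(t-s)\|f\|_{p,\mu_s}
\]
for $1<p<q<+\infty$ and every $f\in C_b(\Rd)$; by density the bound extends to all $f\in L^p(\Rd,\mu_s)$, giving supercontractivity of $G_c(t,s)$. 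The ultrabounded and ultracontractive cases follow in the same way by taking the essential sup-norm in $(\star)$ and applying the corresponding hypothesis on $G(t,s)$.

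To prove $(\star)$, it is enough, by linearity together with the decomposition $f=f^+-f^-$, to show that for every nonnegative $g\in C_b(\Rd)$ one has $0\le G_c(t,s)g\le e^{-c_0(t-s)}G(t,s)g$. Given such a $g$, set $v(t,\cdot):=G_c(t,s)g$ and $w(t,\cdot):=e^{-c_0(t-s)}G(t,s)g$, both bounded and classical in $(s,+\infty)\times\Rd$. Using $D_tG(t,s)g=\mathcal{A}(t)G(t,s)g$, a direct computation gives
\[
D_tw-\mathcal{A}_c(t)w=e^{-c_0(t-s)}\bigl(c(t,\cdot)-c_0\bigr)G(t,s)g\ge 0,
\]
because $c\ge c_0$ and $G(t,s)g\ge 0$ by Lemma \ref{lemm-prel}. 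Hence $w$ is a bounded classical supersolution of the same Cauchy problem that $v$ solves as a bounded classical solution, and the comparison principle for $\mathcal{A}_c(t)$ on $\Rd$, valid under Hypotheses \ref{hyp1} and \ref{hypc} via the Lyapunov argument in \cite{AngLor10Com}, yields $v\le w$. Nonnegativity of $v$ follows from the same principle by comparing $v$ with the zero solution.

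The main (and only nontrivial) obstacle is the careful invocation of the comparison principle for the nonautonomous operator $\mathcal{A}_c(t)$ with unbounded drift on $\Rd$ and potential only bounded below. This is exactly the tool employed in \cite{AngLor10Com} to construct $G_c(t,s)$ and to obtain the bound $\|G_c(t,s)f\|_\infty\le e^{-c_0(t-s)}\|f\|_\infty$ recalled just before the statement; here one applies it to $w-v$. Once $(\star)$ is in place, the three summability-improving properties of $G(t,s)$ transfer to $G_c(t,s)$ automatically, the factor $e^{-c_0(t-s)}$ only rescaling the constant $C_{p,q}(t-s)$; in particular $r\mapsto C_{p,q}(r)e^{-c_0 r}$ still blows up as $r\to 0^+$ and remains decreasing whenever $c_0\ge 0$.
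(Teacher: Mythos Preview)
Your proof is correct and follows essentially the same approach as the paper: both rest on the pointwise comparison $G_c(t,s)g\le e^{-c_0(t-s)}G(t,s)g$ for nonnegative $g\in C_b(\Rd)$, from which the norm estimates are immediate. The paper obtains this inequality by a one-line appeal to the comparison principle in \cite[Thm.~2.1]{KunLorLun09Non}, whereas you spell out the supersolution argument explicitly and cite \cite{AngLor10Com}; the content is the same.
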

\begin{proof}
The proof follows immediately observing that a comparison argument based on
\cite[Thm. 2.1]{KunLorLun09Non} shows that
$G_c(t,s)f\leq e^{-c_0(t-s)}G(t,s)f$, for any $t\ge s$ and any nonnegative function $f\in C_b(\Rd)$.
\end{proof}

\begin{rmk}
{\rm
If $c_0\ge 0$, $\{\mu_t: t \in I\}$ is a sub-invariant system of measures for the evolution operator $G_c(t,s)$.
Indeed, since $G_c(t,s)f\leq e^{-c_0(t-s)}G(t,s)f$ for any $t\ge s$ and any nonnegative function $f\in C_b(\Rd)$,
we can estimate
\begin{align*}
\int_{\Rd}G_c(t,s)f\, d\mu_t(x) \leq e^{-c_0(t-s)}\int_{\Rd}G(t,s)f\, d\mu_t(x) \leq \int_{\Rd}f\, d\mu_s(x) ,
\end{align*}
for any $t>s$.}
\end{rmk}

\section{Heat kernel estimates and $L^2$-uniform integrability}
\label{sect-6}
The main goal of this last section is to use regularizing properties of $G(t,s)$ to obtain bounds on the integral kernel
$g_{t,s}$ of $G(t,s)$.
Actually, we show that Hypothesis \ref{ex_1_1}
allows to obtain an $L^{\infty}$-estimate for $g_{t,s}$ and some $L^2$-uniform integrability properties
of $G(t,s)$.

We first prove the following preliminary result.

\begin{lemm}
\label{mdeltalambda}
Assume that Hypotheses $\ref{hyp1}$ and $\ref{ex_1_1}$ hold. Then, for every $\delta, \lambda >0$, there exists a positive constant $\widetilde{M}_{\delta,\lambda}$ such that
\begin{eqnarray*}
(G(t,s)\varphi_\lambda)(x)\leq \widetilde{M}_{\delta,\lambda},
\end{eqnarray*}
for every $t,s\in I$, $t-s\ge \delta$, $x\in \Rd$ and $\lambda>0$. Moreover,
\begin{align*}
\widetilde{M}_{\delta,\lambda}\le&\exp\left [\max\left\{K_0\delta^{\frac{2}{2-\kappa}}\lambda,\left (C_1\lambda^{\frac{\kappa^2}{2(\kappa-2)}}+C_2\lambda^{\frac{\kappa}{2}}\right )^{\frac{2}{k}}\right\}
\right ],\qquad\;\,\delta,\,\lambda>0,
\end{align*}
where $K_0=K_0(\kappa,K_3)$, $C_1=C_1(\Lambda,d,K_3,\kappa)$ and $C_2=C_2(\Lambda,d,K_3,\kappa)$ $($see the proof$)$.
\end{lemm}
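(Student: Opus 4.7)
\smallskip
\noindent\emph{Proof proposal.} The strategy is to realize $\varphi_\lambda$ itself as a Lyapunov function of the type required by Theorem \ref{ultrathm}, i.e., to find $h_\lambda$ convex, increasing, with $1/h_\lambda\in L^1$ near $+\infty$ and $(\mathcal{A}(t)\varphi_\lambda)(x)\leq -h_\lambda(\varphi_\lambda(x))$ for $|x|\geq R$, and then read off the quantitative bound from the ODE $\omega'=-h_\lambda(\omega)$ whose solution dominates $(G(t,s)\varphi_\lambda)(x)$. Concretely, I would expand
\[
(\mathcal{A}(t)\varphi_\lambda)(x)=2\lambda\varphi_\lambda(x)\bigl[\textrm{Tr}(Q(t))+2\lambda\langle Q(t)x,x\rangle+\langle b(t,x),x\rangle\bigr]
\]
and use \eqref{ell} together with \eqref{ex_1_1} to obtain, for $|x|\geq R$, the bound $(\mathcal{A}(t)\varphi_\lambda)(x)\leq 2\lambda\varphi_\lambda(x)[d\Lambda+2\lambda\Lambda|x|^2-K_3|x|^\kappa]$. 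A Young-type splitting $2\lambda\Lambda|x|^2\leq (K_3/2)|x|^\kappa+C\lambda^{\kappa/(\kappa-2)}$, valid for every $|x|\geq 0$ with $C=C(\Lambda,K_3,\kappa)$, combined with the identity $|x|^\kappa=\lambda^{-\kappa/2}(\log y)^{\kappa/2}$ for $y=\varphi_\lambda(x)$, then yields $\mathcal{A}(t)\varphi_\lambda\leq -g_\lambda(\varphi_\lambda)$ with
\[
g_\lambda(y)=2\lambda y\Bigl[\tfrac{K_3}{2}\lambda^{-\kappa/2}(\log y)^{\kappa/2}-d\Lambda-C\lambda^{\kappa/(\kappa-2)}\Bigr].
\]

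\smallskip
Following the cutoff procedure used in the proof of Theorem \ref{ipo_ultrbdd}, I would then replace $g_\lambda$ by a truncation $h_\lambda$ that agrees with $g_\lambda$ beyond its minimum point $y_{0,\lambda}$ and is constant (equal to $g_\lambda(y_{0,\lambda})$) on $[0,y_{0,\lambda}]$. The resulting $h_\lambda$ is convex and increasing, and $1/h_\lambda$ is integrable at $+\infty$ because $\kappa/2>1$, so Theorem \ref{ultrathm} applies via \cite[Thm. 3.3]{Lun10Com} and gives $(G(t,s)\varphi_\lambda)(x)\leq\omega_\lambda(t-s)$, where $\omega_\lambda$ is implicitly defined by $\tau=\int_{\omega_\lambda(\tau)}^{+\infty}h_\lambda(y)^{-1}dy$, uniformly in $x\in\Rd$ and in $s,t\in I$ with $t-s\geq\delta$. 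To extract the explicit form of $\widetilde M_{\delta,\lambda}$ I would distinguish two regimes according to whether $\omega_\lambda(\delta)$ sits above or below the threshold $y_{0,\lambda}$. Above the threshold, the integrand is, up to a bounded multiplicative factor, $[K_3\lambda^{1-\kappa/2}y(\log y)^{\kappa/2}]^{-1}$; the substitution $u=\log y$ gives $\delta\simeq [K_3(\kappa-2)\lambda^{1-\kappa/2}]^{-1}(\log\omega_\lambda(\delta))^{1-\kappa/2}$, from which $\log\omega_\lambda(\delta)\leq K_0\,\lambda\,\delta^{2/(2-\kappa)}$ with $K_0=[K_3(\kappa-2)]^{-2/(\kappa-2)}$; this is the first entry of the maximum. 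In the complementary regime $\omega_\lambda(\delta)\leq y_{0,\lambda}$, the inequality $(K_3/2)\lambda^{-\kappa/2}(\log y_{0,\lambda})^{\kappa/2}\leq d\Lambda+C\lambda^{\kappa/(\kappa-2)}$ characterizing the threshold gives, by a direct algebraic manipulation, $\log y_{0,\lambda}\leq (C_1\lambda^{\kappa^2/(2(\kappa-2))}+C_2\lambda^{\kappa/2})^{2/\kappa}$ with $C_1=2C/K_3$ and $C_2=2d\Lambda/K_3$, which is the second entry of the maximum.

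\smallskip
\emph{Main obstacle.} The abstract existence of $\widetilde M_{\delta,\lambda}$ is essentially already contained in Theorem \ref{ultrathm}; the delicate point is the explicit bookkeeping. The two main technical hurdles will be (i) carrying out the Young splitting with constants $K_0,C_1,C_2$ depending only on $\Lambda,d,K_3,\kappa$ (and in particular independent of $s,t\in I$, which crucially uses that the bounds \eqref{ell} and \eqref{ex_1_1} are time-independent), and (ii) verifying that the truncated $h_\lambda$ is globally convex and increasing across the join at $y_{0,\lambda}$, so that the Lyapunov comparison from \cite{Lun10Com} applies without modification and produces the uniform estimate with the claimed shape.
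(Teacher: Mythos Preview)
Your approach is the paper's: compute $\mathcal A(t)\varphi_\lambda$, absorb the quadratic term by Young's inequality, truncate $g_\lambda$ at its minimum to get a convex increasing $h_\lambda$, and invoke the ODE comparison from \cite{AngLor10Com} which identifies $\widetilde M_{\delta,\lambda}$ via $\int_{\widetilde M_{\delta,\lambda}}^{+\infty}h_\lambda^{-1}=\delta$.

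There is, however, a real gap in your two-regime analysis. The minimum value $g_\lambda(y_{0,\lambda})$ is negative (already $g_\lambda(1)<0$), so $h_\lambda$ has a zero at some $y_*\in(y_{0,\lambda},+\infty)$, and the defining relation $\delta=\int_{\omega_\lambda(\delta)}^{+\infty}h_\lambda^{-1}$ forces $\omega_\lambda(\delta)>y_*$ for every $\delta>0$. Consequently your ``complementary regime'' $\omega_\lambda(\delta)\le y_{0,\lambda}$ is empty, and your ``bounded multiplicative factor'' claim in the first regime fails near $y_*$, where $1/h_\lambda$ blows up. The paper avoids this by working with the larger threshold
\[
P_\lambda=\exp\Big[\big(C_1\lambda^{\frac{\kappa^2}{2(\kappa-2)}}+C_2\lambda^{\frac{\kappa}{2}}\big)^{2/\kappa}\Big],
\qquad C_1=\tfrac{4C_\kappa}{K_3},\ C_2=\tfrac{4\Lambda d}{K_3},
\]
characterized by $h_\lambda(y)\ge(K_3/2)\lambda^{1-\kappa/2}y(\log y)^{\kappa/2}$ for $y\ge P_\lambda$; it then sets $M=rP_\lambda$ with $r\ge 1$ chosen so that $\int_M^{+\infty}h_\lambda^{-1}\le\delta$, and the max in the statement comes from $r=\max\{\exp[K_0\lambda\delta^{2/(2-\kappa)}-\log P_\lambda],1\}$. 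Replacing your threshold $y_{0,\lambda}$ by $P_\lambda$ fixes the argument and recovers the paper's constants (yours are off by a factor of $2$ precisely because your dichotomy sits at the zero of the bracket rather than at half the leading term).
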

\begin{proof}
First of all we point out that, arguing as in the proof of Theorem \ref{ipo_ultrbdd},
 we deduce that assumption \eqref{conv} in Theorem \ref{ultrathm} is satisfied by $h_{\lambda}=g_{\lambda}(y_{0,\lambda})\chi_{[0,y_{0,\lambda}]}+g_{\lambda}\chi_{(y_{0,\lambda},+\infty)}$ 
where $y_{0,\lambda}>1$ denotes the minimum of the function $g_{\lambda}$ defined by
\begin{eqnarray*}
g_{\lambda}(y)=\lambda^{1-\frac{\kappa}{2}}y\left (K_3(\log y)^{\frac{\kappa}{2}}-2\lambda^{\frac{\kappa^2}{2(\kappa-2)}} C_{\kappa}-2\lambda^{\frac{\kappa}{2}}\Lambda d\right ),\qquad\;\,y\ge 1,
\end{eqnarray*}
and $C_{\kappa}$ is any positive constant such that
\begin{eqnarray*}
2\lambda\Lambda y^2\le \frac{K_3}{2}y^{\kappa}+C_{\kappa}\lambda^{\frac{\kappa}{\kappa-2}},\qquad\;\,y\ge 0.
\end{eqnarray*}
Let us observe that
\begin{eqnarray*}
K_3(\log y)^{\frac{\kappa}{2}}-2\lambda^{\frac{\kappa^2}{2(\kappa-2)}} C_{\kappa}-2\lambda^{\frac{\kappa}{2}}\Lambda d\ge
\frac{K_3}{2}(\log y)^{\frac{\kappa}{2}}
\end{eqnarray*}
if and only if
\begin{eqnarray*}
y\ge\exp\left [\left (C_1\lambda^{\frac{\kappa^2}{2(\kappa-2)}}+C_2\lambda^{\frac{\kappa}{2}}\right )^{\frac{2}{k}}\right ]=:P_{\lambda},
\end{eqnarray*}
where $C_1=4C_{\kappa}/K_3$ and $C_2=4\Lambda d/K_3$.
Clearly, if $y\ge P_{\lambda}$ we can estimate
\begin{eqnarray*}
h_{\lambda}(y)\ge \frac{K_3}{2}\lambda^{1-\frac{\kappa}{2}}y(\log y)^{\frac{\kappa}{2}}.
\end{eqnarray*}
Note that $h_{\lambda}(y)>0$ for every $y\ge P_{\lambda}$ and, consequently, $P_{\lambda}>y_{0,\lambda}$.
For any $r\ge 1$, let us set $P_{\lambda,r}=rP_{\lambda}$. Then,
it follows that
\begin{align}
\int_{P_{\lambda,r}}^{+\infty}
\frac{1}{h_{\lambda}(s)}ds
\le & \frac{2}{K_3}\lambda^{\frac{\kappa-2}{2}}\int_{P_{\lambda,r}}^{+\infty}\frac{1}{y(\log y)^{\frac{\kappa}{2}}}dy\notag\\
=&\frac{4}{(\kappa-2)K_3}\lambda^{\frac{\kappa-2}{2}}(\log P_{\lambda,r})^{1-\frac{\kappa}{2}}\notag\\
=&\frac{4}{(\kappa-2)K_3}\lambda^{\frac{\kappa-2}{2}}\left [\left (C_1\lambda^{\frac{\kappa^2}{2(\kappa-2)}}+C_2\lambda^{\frac{\kappa}{2}}\right )^{\frac{2}{k}}+\log r\right ]^{1-\frac{\kappa}{2}}.
\label{est_Plambdar}
\end{align}
Taking into account formula \eqref{est_Plambdar} we deduce that, for any $\delta>0$, the inequality
\begin{eqnarray*}
\frac{2}{K_3}\lambda^{\frac{\kappa-2}{2}}\int_{P_{\lambda,r}}^{+\infty}\frac{1}{y(\log y)^{\frac{\kappa}{2}}}dy\le\delta,
\end{eqnarray*}
is satisfied when
\begin{eqnarray*}
\log r
\ge K_0\delta^{\frac{2}{2-\kappa}}\lambda-\left (C_1\lambda^{\frac{\kappa^2}{2(\kappa-2)}}+C_2\lambda^{\frac{\kappa}{2}}\right )^{\frac{2}{k}},
\end{eqnarray*}
where $K_0=[(\kappa-2)K_3/4]^{2/(2-\kappa)}$.
Hence, if
\begin{eqnarray*}
r=\max\left\{\exp\left[K_0\delta^{\frac{2}{2-\kappa}}\lambda-\left (C_1\lambda^{\frac{\kappa^2}{2(\kappa-2)}}+C_2\lambda^{\frac{\kappa}{2}}\right )^{\frac{2}{k}}\right],1\right\},
\end{eqnarray*}
then
\begin{eqnarray*}
\int_{P_{\lambda,r}}^{+\infty}
\frac{1}{h_{\lambda}(s)}ds
\le\delta.
\end{eqnarray*}
Since $\widetilde{M}_{\delta,\lambda}$ satisfies
\begin{eqnarray*}
\int_{\widetilde{M}_{\delta,\lambda}}^{+\infty}
\frac{1}{h_{\lambda}(s)}ds=\delta,
\end{eqnarray*}
(see \cite[Thm. 4.4]{AngLor10Com}), $\widetilde{M}_{\delta,\lambda}\le P_{\lambda,r}$ and the assertion follows.
\end{proof}

We can now prove the announced heat kernel estimates.

\begin{thm}
Assume that Hypotheses $\ref{hyp1}$ and $\ref{ex_1_1}$ hold. Then, the integral kernel $g_{t,s}$
of $G(t,s)$ satisfies
\begin{eqnarray*}
0 < g_{t,s}(x,y)\le e^{\frac{C}{(t-s)^{\kappa/(\kappa-2)}}},\qquad\;\, I\ni s<t,\;\,0<t-s\le 1,\;\,x,y\in\R^d,
\end{eqnarray*}
where $C$ is a positive constant, depending on $\kappa$, $\eta_0$, $\Lambda$, $d$ and $K_3$.
\end{thm}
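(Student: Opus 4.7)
I would deduce the pointwise kernel estimate by combining three tools: the Chapman--Kolmogorov identity, the Harnack inequality of Proposition~\ref{Har_prop}, and the uniform $L^\infty$ bound on $G(t,s)\varphi_\lambda$ provided by Lemma~\ref{mdeltalambda}.

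Fix $s<t$ with $0<t-s\le 1$, set $r=(s+t)/2$, and write via Chapman--Kolmogorov
\[
g_{t,s}(x,y)=(G(t,r)[g_{r,s}(\cdot,y)])(x).
\]
Applying Proposition~\ref{Har_prop} with $p=2$ to $h=g_{r,s}(\cdot,y)\ge 0$, then integrating the pointwise Harnack inequality against $d\mu_t(x')$ and invoking the invariance identity $\int G(t,r)\phi\,d\mu_t=\int\phi\,d\mu_r$, yields
\[
g_{t,s}(x,y)^2\int_{\R^d}e^{-|x-x'|^2/(2\eta_0(t-r))}d\mu_t(x')\le \int_{\R^d}g_{r,s}(z,y)^2\,d\mu_r(z).
\]
The numerator on the right is controlled through the ultracontractivity of $G(r,s)$: $\|G(r,s)\|_{L^1(\mu_s)\to L^\infty}\le C_{1,\infty}(r-s)$ translates into $g_{r,s}(z,y)\le C_{1,\infty}(r-s)\rho_s(y)$ (with $\rho_s$ the density of $\mu_s$), and together with the invariance formula $\int g_{r,s}(z,y)\,d\mu_r(z)=\rho_s(y)$ gives $\int g_{r,s}(z,y)^2 d\mu_r(z)\le C_{1,\infty}(r-s)\rho_s(y)^2$. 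For the denominator, a Cauchy--Schwarz bound in $L^2(\mu_t)$ together with $e^{|x-x'|^2/(2\eta_0(t-r))}\le e^{|x|^2/(\eta_0(t-r))}\varphi_{\lambda}(x')$ for $\lambda=(\eta_0(t-r))^{-1}$ and Lemma~\ref{mdeltalambda} applied at $\delta=1$ yields a lower bound $e^{-|x|^2/(\eta_0(t-r))}/\widetilde M_{1,\lambda}$. Combining these ingredients produces the weighted estimate
\[
g_{t,s}(x,y)\le C(t-s)\,\rho_s(y)\,e^{c\,|x|^2/(t-s)}.
\]

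To eliminate the residual $\rho_s(y)$ and the $|x|^2$-Gaussian growth, I would run the symmetric argument with the roles of the two time endpoints interchanged, using the invariance identity $\int g_{t,s}(x',y)\rho_t(x')\,dx'=\rho_s(y)$ to bound $\rho_s(y)$ in terms of $\widetilde M$-type constants and an exponential in $|y|^2/(t-s)$; both $|x|^2$ and $|y|^2$ factors are then absorbed by exploiting Lemma~\ref{mdeltalambda} with $\lambda\asymp 1/(t-s)$. Using the explicit form
\[
\widetilde M_{\delta,\lambda}\le\exp\Big(\max\{K_0\delta^{2/(2-\kappa)}\lambda,\,(C_1\lambda^{\kappa^2/(2(\kappa-2))}+C_2\lambda^{\kappa/2})^{2/\kappa}\}\Big),
\]
the optimal choice $\lambda\asymp(t-s)^{-1}$, $\delta\asymp t-s$ makes every surviving exponent of order $(t-s)^{-\kappa/(\kappa-2)}$, which produces exactly the claimed bound $g_{t,s}(x,y)\le e^{C/(t-s)^{\kappa/(\kappa-2)}}$ uniformly in $x,y$. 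Positivity of $g_{t,s}$ is a direct consequence of Lemma~\ref{lemm-prel}.

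\textbf{Main obstacle.} The critical technical point is the joint removal of the $e^{c|x|^2/(t-s)}$ and $\rho_s(y)$ factors coming out of the Harnack/invariance step, since the natural lower bound on the Gaussian denominator in terms of tight measures inevitably picks up an $|x|^2$-penalty. The absorption hinges on the precise polynomial dependence of the exponent of $\widetilde M_{\delta,\lambda}$ on $\lambda$ and $\delta$ in Lemma~\ref{mdeltalambda}; the threshold exponent $\kappa/(\kappa-2)$ in the final estimate is an exact mirror of the $\kappa$-growth of the drift imposed by Hypothesis~\ref{hyp-2}.
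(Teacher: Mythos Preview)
Your route is substantially more involved than the paper's, and it has a real gap at the last step.

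The paper's proof is one line: by the Dunford--Pettis theorem one identifies $\|g_{t,s}\|_{L^\infty(\R^{2d})}$ with the operator norm $\|G(t,s)\|_{1\to\infty}$, and the latter is bounded via the factorisation
\[
\|G(t,s)\|_{1\to\infty}\le \|G(t,(t+s)/2)\|_{2\to\infty}\,\|G((t+s)/2,s)\|_{1\to 2}.
\]
The second factor is controlled by Theorem~\ref{teo_12} (formula~\eqref{formula}), the first by \eqref{est_2_pippo} together with Lemma~\ref{mdeltalambda}; both are of order $\exp\bigl(C/(t-s)^{\kappa/(\kappa-2)}\bigr)$. No kernel-level manipulation is needed.

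Your scheme does reproduce, after the Chapman--Kolmogorov/Harnack step and one further convolution with $G(t,r')$ to absorb the $e^{c|x|^2/(t-s)}$ via Lemma~\ref{mdeltalambda}, the weighted bound
\[
g_{t,s}(x,y)\le e^{C/(t-s)^{\kappa/(\kappa-2)}}\,\rho_s(y),
\]
which is exactly the statement $\|G(t,s)\|_{L^1(\mu_s)\to L^\infty}\le e^{C/(t-s)^{\kappa/(\kappa-2)}}$ rewritten pointwise. The trouble is your proposed removal of $\rho_s(y)$. The ``symmetric argument with roles of the two time endpoints interchanged'' does not exist here: the evolution is nonautonomous and non-symmetric, so there is no companion backward evolution operator with the same Harnack estimate in the $y$-variable. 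And the concrete step you sketch, inserting the bound $g_{t,s}(x',y)\le C\rho_s(y)e^{c|x'|^2/(t-s)}$ into the identity $\rho_s(y)=\int g_{t,s}(x',y)\rho_t(x')\,dx'$, only yields the triviality $\rho_s(y)\le C\rho_s(y)\cdot(\text{positive constant})$; it cannot produce an estimate of $\rho_s(y)$ in terms of $e^{c|y|^2/(t-s)}$. So the passage from the $\rho_s$-weighted bound to the uniform bound on $g_{t,s}$ is not justified by what you wrote. Either invoke a separate uniform upper bound on the densities $\rho_s$ (which the paper does not prove), or bypass the issue entirely by working at the operator-norm level as the paper does.
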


\begin{proof}
By Dunford-Pettis theorem (see \cite{DunPet39Lin}), we have $\|G(t,s)\|_{1\to \infty}=\|g_{t,s}\|_{L^{\infty}(\R^{2d})}$.
Formula \eqref{nascosta} implies that $\tilde C_{1,\infty}(t,s)\le C_{1,2}((t-s)/2)C_{2,\infty}((t-s)/2)$.

To estimate $C_{2,\infty}$, we can use \eqref{est_2_pippo} and Lemma
\ref{mdeltalambda} (see also \eqref{mdelta}) which show that
\begin{align*}
C_{2,\infty}((t-s)/2)&=2 e^{\frac{c_1}{\eta_0(t-s)}}M_{\frac{t-s}{4},\frac{1}{\eta_0(t-s)}}
\le 2 e^{\frac{c_1}{\eta_0(t-s)}}\widetilde{M}_{\frac{t-s}{4},\frac{1}{\eta_0(t-s)}}\le e^{\frac{\tilde{C}}{(t-s)^{\kappa/(\kappa-2)}}},
\end{align*}
for any
$0<t-s\le 1$ and some positive constants $c_1$ and $\tilde{C}$, this latter depending on $\kappa, K_3, \eta_0$.
Now, using \eqref{formula}, we get the claim.
\end{proof}

\begin{rmk}
{\rm
We can not expect the
polynomial decay $\|g_{t,s}\|_{L^{\infty}(\R^{2d})}\le C(t-s)^{-\alpha}$ as $t-s\to 0$, for some $\alpha>0$, which is typical of the classical case of
bounded coefficients. Indeed, in the autonomous case, the Varopoulos theorem (see \cite{Var85Har}) implies that a decay of this type occurs if and only if the Sobolev embedding theorems hold,
which, in general, is not the case as the simple example of the standard Gaussian measure in $\R$ shows.}
\end{rmk}

Let us now prove the $L^2$-uniform integrability.

\begin{prop}
Under Hypotheses $\ref{hyp1}$ and $\ref{hyp-2}$, for any $s,t\in I$, with $s<t$, the operator
$G(t,s)$ is $L^2(\Rd,\mu_t)$-uniformly integrable, i.e.,
\begin{eqnarray*}
\lim_{r\to +\infty}\,\sup_{t>s}\sup_{{ f\in L^2(\Rd,\mu_s) }\atop{ \|f\|_{2,\mu_s}\leq 1}}\int_{\{|G(t,s)f| \geq r\}}|G(t,s)f|^2d\mu_t(x) =0.
\end{eqnarray*}

\end{prop}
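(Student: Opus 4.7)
The plan is to reduce the claim to a Chebyshev-type tail estimate once the ultracontractivity of $G(t,s)$ (just established) gives a uniform $L^\infty$ bound on $G(t,s)f$ for $f$ in the unit ball of $L^2(\Rd,\mu_s)$. Indeed, by the ultracontractivity theorem of the preceding subsection, $G(t,s)$ maps $L^p(\Rd,\mu_s)$ into $L^\infty(\Rd)$ continuously for every $p\ge 1$, so in particular
\begin{eqnarray*}
M(s,t):=\|G(t,s)\|_{L^2(\Rd,\mu_s)\to L^\infty(\Rd)}<+\infty,\qquad\;\,I\ni s<t.
\end{eqnarray*}
Consequently, for any $f\in L^2(\Rd,\mu_s)$ with $\|f\|_{2,\mu_s}\le 1$ we have $|G(t,s)f(x)|\le M(s,t)$ pointwise on $\Rd$, and the level set $\{|G(t,s)f|\ge r\}$ is empty as soon as $r>M(s,t)$, making the integral vanish identically beyond that threshold.

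For $r\le M(s,t)$ I would combine this $L^\infty$ bound with the $L^2$-contraction $\|G(t,s)f\|_{2,\mu_t}\le\|f\|_{2,\mu_s}\le 1$. Chebyshev's inequality yields
\begin{eqnarray*}
\mu_t(\{|G(t,s)f|\ge r\})\le \frac{\|G(t,s)f\|_{2,\mu_t}^2}{r^2}\le\frac{1}{r^2},
\end{eqnarray*}
and multiplying by the pointwise bound $|G(t,s)f|^2\le M(s,t)^2$ on this set gives
\begin{eqnarray*}
\int_{\{|G(t,s)f|\ge r\}}|G(t,s)f|^2\,d\mu_t\le \frac{M(s,t)^2}{r^2},
\end{eqnarray*}
which tends to $0$ as $r\to +\infty$, uniformly over $f$ in the unit ball of $L^2(\Rd,\mu_s)$. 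To obtain uniformity over pairs $(s,t)$ with $t-s$ bounded away from $0$, the monotonicity of the function $r\mapsto C_{2,\infty}(r)$ supplied by Definition \ref{definitions} allows us to replace $M(s,t)$ by $C_{2,\infty}(\delta)$ whenever $t-s\ge\delta$, so the tail bound $C_{2,\infty}(\delta)^2/r^2$ is uniform in that regime.

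The substantive content of the proof is therefore packed entirely into the preceding ultracontractivity theorem; everything else is a one-line Chebyshev-Markov tail estimate, so I do not anticipate a real obstacle beyond recording the $L^\infty$ bound correctly and invoking the monotonicity of $C_{2,\infty}$ when discussing uniformity in $(s,t)$.
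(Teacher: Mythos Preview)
Your argument is correct and follows a different, more economical route than the paper. You invoke the ultracontractivity just established to get the uniform pointwise bound $|G(t,s)f|\le C_{2,\infty}(t-s)$, then combine it with Chebyshev to obtain a tail of order $C_{2,\infty}(t-s)^2/r^2$. The paper instead goes back to the Harnack inequality \eqref{Harnack}, derives the pointwise Gaussian bound $|G(t,s)f(x)|^2\le C\varphi_{\lambda_0}(x)$ with $\lambda_0=(\eta_0(t-s))^{-1}$, and then uses H\"older together with Chebyshev to arrive at a tail of order $C\,\|\varphi_{2\lambda_0}\|_{1,\mu_t}^{1/2}/r$. Your approach is shorter because it leans on the heavier ultracontractivity theorem; the paper's approach is more self-contained, requiring only Harnack plus the Gaussian integrability \eqref{cond-int-c1-c2}, and would survive under weaker hypotheses than full ultracontractivity.

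One caveat you already flagged deserves emphasis: both your bound and the paper's depend on $t-s$ through constants that blow up as $t-s\to 0^+$ (yours through $C_{2,\infty}(t-s)$, the paper's through $C=2e^{R^2/(\eta_0(t-s))}$ and $\lambda_0$). Thus neither argument literally delivers the unrestricted $\sup_{t>s}$ in the displayed formula; what both actually establish is uniform integrability for each fixed pair, equivalently uniformly over $\{t-s\ge\delta\}$ for every $\delta>0$. This appears to be an imprecision in the statement rather than a defect in either proof, since $G(t,s)\to\mathrm{id}$ as $t\to s^+$ and the unit ball of $L^2(\Rd,\mu_s)$ is not uniformly integrable.
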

\begin{proof}
To begin with, let us prove that there exists a positive constant $C$, independent of $f$, such that
\begin{equation}
\int_{A}|G(t,s)f|^2d\mu_t(x)  \leq C\int_{A}\varphi_{\lambda_0}d\mu_t(x) ,
\label{confronto}
\end{equation}
for any $f\in L^2(\Rd,\mu_s)$ with $\|f\|_{2,\mu_s}\le 1$ and any Borel set $A\subset\Rd$, where $\lambda_0=(\eta_0(t-s))^{-1}$.
Note that our assumptions imply that $\varphi_{\lambda_0}\in L^1(\Rd,\mu_t)$ (see \eqref{cond-int-c1-c2}).
We first assume that $f\in C_c(\Rd)$ satisfy $\|f\|_{2,\mu_s}\leq 1$. Integrating \eqref{Harnack} (with $p=2$)
 with respect to $d\mu_t(y)$ and taking \eqref{star} into account, we get
\begin{eqnarray*}
|(G(t,s)f)(x)|^2 \leq 2e^{\frac{R^2}{\eta_0(t-s)}}e^{\frac{|x|^2}{\eta_0(t-s)}}=: C\varphi_{\lambda_0}(x),\qquad\;\, x\in \Rd,
\end{eqnarray*}
where $R$ is any positive constant such that $\mu_t(B(0,R))\ge 1/2$ for any $t\in I$.
From this estimate, \eqref{confronto} follows at once.

Since any function $f\in L^2(\Rd,\mu_s)$, with $\|f\|_{2,\mu_s}\leq 1$, can be approximated by a sequence $(f_n)_n\subset C_c(\Rd)$ satisfying $\|f_n\|_{2,\mu_s}\le 1$ for any $n\in\N$,
estimate \eqref{confronto} can be extended by density to any $f\in L^2(\Rd,\mu_s)$ with $\|f\|_{2,\mu_s}\leq 1$.

Now, recalling that $G(t,s)$ is a contraction from $L^2(\Rd,\mu_s)$ to $L^2(\Rd,\mu_t)$, applying Chebyshev inequality and H\"older inequality, from \eqref{confronto} we easily deduce that
\begin{align*}
\int_{\{|G(t,s)f| \geq r\}}|G(t,s)f|^2d\mu_t(x) \le &
C\int_{\{|G(t,s)f| \geq r\}}\varphi_{\lambda_0}\mu_t(x)\\
\le & C\|\varphi_{2\lambda_0}\|_{1,\mu_t}^{\frac{1}{2}}
(\mu_t(\{|G(t,s)f| \geq r\}))^{\frac{1}{2}}\\
\le & \frac{C}{r}\sup_{t\in I}\|\varphi_{2\lambda_0}\|_{1,\mu_t}^{\frac{1}{2}}.
\end{align*}
The claim now follows at once.
\end{proof}

\paragraph{\bf Acknowledgments.} The authors wish to thank Alessandra Lunardi for helpful comments and for pointing out
\cite{maheux}.

\end{document}